\newtheorem{thm}{Theorem}[section]
\newtheorem{lm}[thm]{Lemma}
\newtheorem{prop}[thm]{Proposition}
\newtheorem{cor}[thm]{Corollary}
\theoremstyle{definition}
\newtheorem{dfn}[thm]{Definition}
\newcommand{\mm}{\mathbf{m}}
\newcommand{\F}{\mathbb{F}}
\newcommand{\R}{\mathbb{R}}
\newcommand{\C}{\mathbb{C}}
\newcommand{\Z}{\mathbb{Z}}
\newcommand{\Q}{\mathbb{Q}}
\renewcommand{\O}{\mathcal{O}}
\newcommand{\Apt}{\mathcal{A}}
\newcommand{\Build}{\mathcal{B}}
\newcommand{\Lus}{{\mathscr{L}}}
\newcommand{\Fr}{\text{Fr}}
\newcommand{\EL}{\textsc{L}}
\DeclareMathOperator{\Id}{Id}
\DeclareMathOperator{\Ad}{Ad}
\DeclareMathOperator{\ad}{ad}
\DeclareMathOperator{\obs}{obs}
\DeclareMathOperator{\pr}{pr}
\DeclareMathOperator{\tame}{tame}
\DeclareMathOperator{\cond}{cond}
\DeclareMathOperator{\rel}{rel}
\DeclareMathOperator{\SC}{sc}
\DeclareMathOperator{\diag}{diag}
\DeclareMathOperator{\sep}{sep}
\DeclareMathOperator{\unr}{unr}
\DeclareMathOperator{\Zar}{Zar}
\DeclareMathOperator{\val}{val}
\DeclareMathOperator{\Gal}{Gal}
\DeclareMathOperator{\Vol}{Vol}
\DeclareMathOperator{\Hom}{Hom}
\DeclareMathOperator{\Ind}{Ind}
\DeclareMathOperator{\Res}{Res}
\DeclareMathOperator{\cind}{cind}
\DeclareMathOperator{\Cok}{Cok}
\DeclareMathOperator{\Ker}{Ker}
\DeclareMathOperator{\Zind}{Zind}
\DeclareMathOperator{\Wh}{Wh}
\newcommand{\Cat}[1]{ {\mathsf{#1}} }
\newcommand{\sch}[1]{\underline{\boldsymbol{ \mathrm{#1}}}}
\newcommand{\alg}[1]{\boldsymbol{\mathrm{#1}}}
\newcommand{\Lie}[1]{ {\mathfrak{#1}} }
\newcommand{\sheaf}[1]{{\mathscr{#1}}}
\newcommand{\OO}{\mathcal{O}}
\newcommand{\Inertia}{\mathcal{I}}
\newcommand{\Weil}{\mathcal{W}}
\newcommand{\Lang}{\mathcal{L}}
\newcommand{\msc}[1]{\mathscr{#1}}
\newcommand{\p}{\mathbf{p}}
\newcommand{\q}{\mathbf{q}}
\newcommand{\into}{\hookrightarrow}
\newcommand{\onto}{\twoheadrightarrow}
\newcommand{\From}{\colon}
\newcommand{\To}{\rightarrow}
\newcommand{\inarrow}{\arrow[hook]}
\newcommand{\onarrow}{\arrow[two heads]}
\newcommand{\set}[1]{\left\{#1\right\}}
\newcommand{\isom}{\cong}
\newcommand{\defeq}{:=}
\title[Whittaker models for covering groups]{Whittaker models for depth zero representations of covering groups}
\author[Fan Gao]{Fan Gao}
\author[Martin H. Weissman]{Martin H.~Weissman}
\address[Fan Gao]{Department of Mathematics, Purdue University, 150 N. University Street, West Lafayette, IN 47907}
\address[Martin H. Weissman]{Department of Mathematics, 1156 High Street, University of California, Santa Cruz, CA 95064}
\email[Fan Gao]{fangao.nus@gmail.com}
\email[Martin H. Weissman]{weissman@ucsc.edu}
\subjclass[2010]{Primary 11F70; Secondary 22E50}
\keywords{Brylinski-Deligne, covering groups, metaplectic, depth-zero, supercuspidal, Whittaker, Deligne-Lusztig}
\begin{document}

\begin{abstract}
We study the dimension of the space of Whittaker functionals for depth zero representations of covering groups. In particular, we determine such dimensions for arbitrary Brylinski-Deligne coverings of the general linear group. The results in the paper are motivated by and compatible with the work of Howard and the second author, and earlier work by Blondel.
\end{abstract}

\maketitle

\tableofcontents

\section*{Introduction}

Let $F$ be a nonarchimedean local field, and $\alg{G}$ a quasisplit reductive group over $F$. For an irreducible admissible representation $\pi$ of $G \defeq \alg{G}(F)$, the uniqueness of Whittaker functionals of $\pi$ is a powerful tool for understanding the representation. On the arithmetic side, existence (i.e., genericity) and uniqueness of Whittaker functionals enable the theory Langlands-Shahidi $L$-functions (see \cite{Sha}).  In the Langlands correspondence, generic representations serve as markers within tempered L-packets, by a conjecture of Shahidi, elucidating the fine structure of the Langlands parameterization. 

For nonlinear covering groups of $G$, the dimension of the space Whittaker functionals is more delicate as uniqueness often fails (the dimension is often greater than one).  This has been the primary obstacle to extending Langlands-Shahidi $L$-functions to covering groups in \cite{Ga1}. In a previous work, the first author \cite{Ga2} investigated genericity for some unramified theta representations of covering groups.  The existence of Whittaker functionals depends on structural and arithmetic data associated to the theta representation. 

The majority of earlier work has focused on the case of principal series and theta representations.  Here, motivated by \cite{HW} and \cite{Blo}, we consider depth zero supercuspidal representations of a covering group $\tilde G$ in the framework of Brylinski-Deligne \cite{BD}.  We expect that, as in the work of DeBacker and Reeder \cite{DR} for linear groups, the depth zero representations for covering groups become a fruitful test case for the local Langlands conjectures, in which the fine structure of L-packets can be studied in detail.  Now we give an outline and state the main results of the paper. 

\subsection*{Outline}

In \S \ref{Section-structure}, we describe the class of covering groups under consideration.  By working with covering groups over $\O$, i.e., unramified covering groups, we obtain a group $G = \alg{G}(F)$ equipped with a hyperspecial maximal compact subgroup $G_\circ = \sch{G}(\O) \subset G$, a central extension $\mu_n \into \tilde G \onto G$, and a splitting $G_\circ \into \tilde G$.  On the other hand, the extension $\tilde G$ does not come with a canonical splitting over other parahoric subgroups.  Theorem \ref{parahoric-cover} describes the resulting extensions $\mu_n \into \tilde G_x \onto G_x$ for all parahorics, in terms of the root data of a ``residual extension''.  Such results were given in the earlier work of the second author \cite{We5}, but here we hope the proof is cleaner.

In \S 2, we turn our attention to the genuine irreducible representations of $\tilde G$, and their $\psi$-Whittaker functionals.  We focus quickly on the depth zero supercuspidal representations.  These provide an excellent family of test cases, and if one believes that the work of J.K.~Yu \cite{Yu} and Ju Lee Kim \cite{Kim} extends to covering groups, then the depth zero supercuspidal representations play a role in all representations (for $p$ sufficiently large).  

After \cite{HW}, the work of Moy-Prasad \cite{MP1}, \cite{MP2}, and Morris \cite{Mor} extends to covering groups:  every genuine depth zero supercuspidal irrep $\pi$ of $\tilde G$ arises, via extension and induction, from a cuspidal representation $\rho$ of a finite reductive group.  

Based on this, Theorem \ref{OneSupp} relates the space of $\psi$-Whittaker functionals on $\pi$ to a corresponding space of Whittaker functionals for $\rho$.  The methods here are very similar to those of Blondel \cite{Blo}, who pursued similar goals.  While for linear groups, the method of Gelfand and Kazhdan \cite{GK} (see also \cite{Rod}) gives uniqueness of Whittaker models, their method does not adapt easily to general covering groups.  As a replacement, we bootstrap the uniqueness of Whittaker models for groups over finite fields, using the fact that {\em nonlinear covers} of parahoric subgroups are related to {\em linear} groups over finite fields (the idea of \cite[Construction 12.11]{BD}).

The remainder of \S 2 refines estimates for the space of $\psi$-Whittaker functionals on $\pi$, relating its dimension to the index of sublattices within $Y^{W \rtimes \Fr}$ of Weyl- and Frobenius-invariant cocharacters of a maximal torus in $\alg{G}$.  The sharp result is Theorem \ref{Bound1}, that $\dim \Wh_\psi(\pi) = [Y^{W \rtimes \Fr} : Y_{x,\rho}]$  and the rest of the section describes the lattice $Y_{x,\rho}$.  An easy consequence is the uniqueness of Whittaker models, $\dim \Wh_\psi(\pi) = 1$ when $\alg{G}$ is {\em semisimple}, and $\pi$ is a genuine, depth zero, generic, supercuspidal irrep of $\tilde G$.  Such a result, for covers of $\alg{SL}_n$, was obtained in a different fashion in the PhD thesis of Stephen Devlin \cite{Dev}.

The depth zero supercuspidal representations of $\tilde G$ arise from cuspidal representations of a finite reductive group $\alg{\bar G}'(\F_q)$.  These are the subject of the work of Deligne and Lusztig in \cite{DL} (and earlier work of Green \cite{Gre} for the general linear group).  In the generic case (at least for groups with connected center), such cuspidal representations of $\alg{\bar G}'(\F_q)$ arise from characters of minisotropic tori, in general position.  

In \S 3, we review the construction of Deligne-Lusztig representations, and relate the lattice $Y_{x,\rho}$ to the Deligne-Lusztig data $({}^w \alg{\bar T}', \theta')$ (a minisotropic torus and character).  In \cite{DL}, Deligne and Lusztig parameterize such data by semisimple conjugacy classes in a dual group; this has been rephrased (with fewer choices) in terms of a complex dual group by Lusztig in \cite{Lus}, and this seems more relevant for the local Langlands correspondence, especially after DeBacker and Reeder \cite{DR}.  In this spirit, we provide a description of $Y_{x,\rho}$ in terms of the Lusztig parameter, in Proposition \ref{thetaProp}.  In this way, we effectively describe the dimension $\dim \Wh_\psi(\pi)$ in terms of the Lusztig parameter of the cuspidal representation used to construct $\pi$.

Finally, in \S 4, we apply these results to all covers of $\alg{GL}_r$.  Some of these covers have been studied by Kazhdan and Patterson in \cite{KP}, but the class of Brylinski-Deligne covers is broader.  By passing through the Lusztig parameters as above, we describe the dimensions $\dim \Wh_\psi(\pi)$ for $\pi$ a genuine, generic, depth zero, supercuspidal irrep of a cover of $GL_r$.  This recovers results of \cite{Blo} in the case of Kazhdan-Patterson coverings -- indeed the basic methods are not so different from \cite{Blo}.  But by passing through Lusztig parameters, some computations are made simpler and generalize easily.  As Lusztig parameters should be connected to a local Langlands parameterization for covering groups, this suggests that the mysterious dimensions $\dim \Wh_\psi(\pi)$ are predictable from the Langlands parameter.  

We remark that for linear algebraic groups,  the dimension of certain degenerate Whittaker functionals is obtained by Moeglin-Waldspurger \cite{MW}, in terms of the Harish-Chandra-Howe local character expansion. For covering groups, W.-W.~Li \cite{Li} has shown that the character expansion holds, and furthermore the results in \cite{MW} are extended to covering groups in \cite{Pat}. Therefore, it would be interesting to compute explicitly the local character expansion of the depth zero supercuspidal representation in our paper and compare the results.

As it is precarious (and sometimes incorrect) to claim that results for linear algebraic groups extend to covering groups evidently, or even from one class of covering groups to another, we provide details whenever possible. Only if the argument involves simple cosmetic change from existing ones in the literature, we omit details and are content with giving the reference or outline of the proof.  An example is the ``heredity theorem'' of Rodier \cite{Rod}, extended to Kazhdan-Patterson covering groups by Banks \cite{Ban}, which extends with only cosmetic changes to Brylinski-Deligne covering groups.  See \S \ref{Rodier} for details.

This paper does not introduce significant new tools in representation theory.  But by working in a very general setting of covering groups, and by connecting Whittaker models to Lusztig parameters, the dimensions $\dim \Wh_\psi(\pi)$ should be easier to compute.  In particular, one can easily predict the uniqueness of Whittaker models, when it occurs.  Eventually we expect the results here to shed light on L-packets for coverings of reductive groups over local fields.  

\subsection*{Acknowledgments} 
The first author would like to thank Corinne Blondel for several interesting comments on her earlier work on the topic. The second author would like to thank Freydoon Shahidi and the first author for insights and hospitality during a visit to Purdue University.  This work was supported by a grant from the Simons Foundation (\#426453, Martin Weissman).

\section{Structure theory}
\label{Section-structure}

We begin by recalling the structure theory of unramified covering groups from \cite{We3}.  Let $F$ be a nonarchimedean local field, with ring of integers $\O$ and residue field $\F_q$ of order $q$.  Write $\varpi$ for a uniformizer of $F$, and $\val \From F \To \Z$ a discrete valuation such that $\val(\varpi) = 1$.  Let $F^{\sep}$ be a separable algebraic closure of $F$, and $F^{\unr}$ the maximal unramified extension of $F$ in $F^{\sep}$.  Let $\O^{\unr}$ be the integral closure of $\O$ in $F^{\unr}$.  The residue field of $\O^{\unr}$ is an algebraic closure $\bar \F_q / \F_q$.  Let $\Fr$ be the geometric Frobenius automorphism, viewed as an automorphism of $F^{\unr}$ or of $\bar \F_q$.

\subsection{The unramified group}

Let $\sch{G}$ be a reductive group over $\O$, i.e., a smooth group scheme over $\O$ with connected reductive geometric fibres.  Then $\alg{G} = \sch{G}_F$ is quasisplit and splits over $F^{\unr}$.  Let $G_\circ = \sch{G}(\O)$ and $G = \alg{G}(F)$; then $G_\circ$ is a hyperspecial maximal compact subgroup of $G$.  Write $G^{\unr} = \alg{G}(F^{\unr})$ and $G_\circ^{\unr} = \alg{G}(\O^{\unr})$.  As for $\alg{G}$, write 
$$X = \sch{X}(F), \quad X_\circ = \sch{X}(\O), \quad X^{\unr} = \sch{X}(F^{\unr}), \quad X_\circ^{\unr} = \sch{X}(\O^{\unr}),$$ 
for any $\O$-scheme $\sch{X}$.

Define $\alg{\bar G} = \sch{\bar G}_{\F_q}$, the special fibre of $\sch{G}$.  As a variety over a field, we adopt the classical perspective and also write $\alg{\bar G}$ for its set of $\bar \F_q$-points:  $\alg{\bar G} = \alg{\bar G}(\bar \F_q)$.  We write $\bar G = \alg{\bar G}(\F_q) = \alg{\bar G}^{\Fr}$ for the $\F_q$-points.  Similarly for any (smooth) scheme $\sch{X}$ over $\O$, we write
$$\alg{\bar X} = \sch{X}_{\F_q}, \quad \alg{\bar X} = \alg{\bar X}(\bar \F_q), \quad \bar X = \alg{\bar X}^{\Fr}.$$

Let $\sch{S}$ be a maximal split torus in $\sch{G}$ over $\O$, let $\sch{T}$ be its centralizer, a maximal torus in $\sch{G}$ defined over $\O$.   Let $\sch{B}$ be a Borel subgroup of $\sch{G}$ defined over $\O$ and containing $\sch{T}$, and let $\sch{U}$ be the unipotent radical of $\sch{B}$.  Let $\sch{N}$ be the normalizer of $\sch{T}$ in $\sch{G}$. 

Let $(X, \Phi, \Delta, Y, \Phi^\vee, \Delta^\vee)$ be the based root datum of $\sch{G}$ with respect to $\sch{B}$ and $\sch{T}$.  Here $X$ and $Y$ are the character and cocharacter lattices of $\sch{T}$, $\Phi$ and $\Phi^\vee$ are the sets of (absolute) roots and coroots, and $\Delta$ and $\Delta^\vee$ the simple roots and simple coroots with respect to the choice of Borel subgroup $\sch{B}$.  Define $Y^{\SC} \subset Y$ be the sublattice generated by $\Phi^\vee$.  The root datum is endowed with an action of $\Fr$.

If $\alpha \in \Phi$, we write $\sch{U}_\alpha$ for the corresponding root subgroup of $(\sch{G})_{\O^{\unr}}$.  Thus $\sch{U}_\alpha$ is a group scheme over $\O^{\unr}$, isomorphic to the additive group scheme $\sch{G}_a$.  For any root $\alpha \in \Phi$, it is possible to choose a pair of isomorphisms (over $\O^{\unr}$)
$$e_\alpha \From \sch{G}_a \To \sch{U}_\alpha, \quad e_{-\alpha} \From \sch{G}_a \To \sch{U}_{-\alpha},$$
such that the following is a map of schemes $n_\alpha \From \sch{G}_m \To \sch{N}$ over $\O^{\unr}$:
$$n_\alpha(t) \defeq e_\alpha(t) e_{-\alpha}(-t^{-1}) e_\alpha(t).$$
When the $e_\alpha, e_{-\alpha}$ are so chosen, we also define $h_\alpha \From \sch{G}_m \To \sch{T}$ by
$$h_\alpha(t) \defeq n_\alpha(t) n_\alpha(-1).$$
It turns out that $h_\alpha$ is a homomorphism of group schemes over $\O^{\unr}$; it is precisely the coroot $\alpha^\vee$.

\subsection{The full tame extension}

Let $\sch{G}'$ be a central extension of $\sch{G}$ by $\alg{K}_2$, in the category of sheaves of groups on $\O_{\Zar}$.  Such extensions are the focus of the work of Brylinski-Deligne \cite{BD}, who carried out a classification when working over a base field; over $\O_{\Zar}$, the classification of \cite{BD} has been adapted in \cite{We3}.  Extending scalars from $\O$ to $F$, we have a central extension of sheaves of groups on $F_{\Zar}$,
\begin{equation}
\label{cextG}
\alg{K}_2 \into \alg{G}' \onto \alg{G}.
\end{equation}

The extension $\sch{G}'$ is classified by three invariants, denoted $(Q, \sheaf{D}, f)$.  The first is a $\Fr$- and Weyl-invariant quadratic form $Q \From Y \to \Z$.  Let $B_Q \From Y \otimes Y \to \Z$ be the associated bilinear form,
$$B_Q(y_1, y_2) =Q(y_1+y_2)-Q(y_1) -Q(y_2).$$

Taking $F$-points of \eqref{cextG} yields a central extension $\alg{K}_2(F) \into \alg{G}'(F) \onto G$.  The tame (residue) symbol in K-theory gives a surjective homomorphism,
$$\partial \From \alg{K}_2(F) \onto \F_q^\times; \quad  \partial \{ u,v \} = (-1)^{\val(u) \val(v)} \cdot \pr \left( \frac{v^{\val(u)}}{u^{\val(v)}} \right).$$
Here $\pr \From \O \To \F_q$ is the reduction map.  Pushing out the extension via $\partial$ yields what we call the {\em full tame extension}
$$\F_q^\times \into G' \onto G.$$
Since the extension $\sch{G}'$ is defined over $\O$, this extension comes with a canonical splitting over $G_\circ$.  Write $\iota_\circ$ for this canonical splitting, $\iota \From G_\circ \into G'$.  We describe $\iota_\circ$ further in a subsequent section.

This construction behaves well for unramified extensions; the extension $\F_q^\times \into G' \onto G$ arises also as the $\Fr$-fixed points of the extension obtained from taking $F^{\unr}$-points of \eqref{cextG} and applying the tame symbol there,
\begin{equation}
\label{fulltame}
\bar \F_q^\times \into (G^{\unr})' \onto G^{\unr}.
\end{equation}
This extension comes with a canonical splitting over $G_\circ^{\unr} = \sch{G}(\O^{\unr})$.  


 

\subsection{Parahoric subgroups}

Let $\Build^{\unr}$ denote the reduced building of $\alg{G}_{F^{\unr}}$, i.e., the building of the adjoint quotient of $\alg{G}$ over $F^{\unr}$.  Let $\Apt^{\unr}$ be the apartment in $\Build^{\unr}$ corresponding to the maximal torus $\alg{T}$ (which splits over $F^{\unr}$).  Let $\Build$ be the building of $\alg{G}$ over $F$, naturally identified with the fixed points $(\Build^{\unr})^{\Fr}$; the apartment $\Apt \subset \Build$ associated to the maximal $F$-split torus $\alg{S}$ can be identified with $(\Apt^{\unr})^{\Fr}$.  

If $x \in \Build$, then we write $G_x$ for the corresponding parahoric subgroup of $G$ and $G_x^+$ for its pro-unipotent radical.  Bruhat and Tits give a canonical smooth group scheme $\sch{G}_x$ over $\O$, for which $\sch{G}_x(\O) = G_x$.  We follow the convention for parahorics in which the special fibre $\alg{\bar G}_x = (\sch{G}_x)_{\F_q}$ is a connected affine algebraic group.

We write $\circ$ for the base point in $\Apt$ at which $G_\circ = \sch{G}_\circ(\O) = \sch{G}(\O)$.  Placing $\circ$ at zero, we may identify $\Apt$ with $(Y \otimes_\Z \R)^{\Fr}$.  If $x \in \Apt$, then we have inclusions of group schemes over $\O$,
$$\sch{S} \subset \sch{T} \subset \sch{G}_x.$$ 
In this case, write $\alg{\bar M}_x$ for the unique standard Levi component of $\alg{\bar G}_x$ -- one containing $\alg{\bar T}$.  Then $\bar M_x = \alg{\bar M}_x(\F_q)$ is identified with $G_x / G_x^+$.

The root datum of $\alg{\bar M}_x$ with respect to $\alg{\bar T}$ can be described as follows:  let $\Phi_x$ be the set of roots $\alpha \in \Phi$ such that $\alpha(x)$ is an integer (viewing $x \in Y \otimes_\Z \R$); let $\Phi_x^\vee$ be the set of corresponding coroots.  The original choice of Borel subgroup defines a subset $\Phi_x^+$ of positive roots, and a set of simple positive roots $\Delta_x$ within $\Phi_x^+$.  The root datum of $\alg{\bar M}_x$  is the sextuple (endowed with $\Fr$-action),
$$(X, \Phi_x, \Delta_x, Y, \Phi_x^\vee, \Delta_x^\vee).$$

\subsection{Covers of parahorics}

Here we consider a point $x \in \Apt$, the parahoric subgroup $G_x$, and the full tame extension restricted to $G_x$,
\begin{equation}
\label{coverGx}
\F_q \into G_x' \onto G_x.
\end{equation}
Recall that $\sch{G}_x$ is a smooth group scheme over $\O$ with special fibre $\alg{\bar G}_x$ and general fibre $\alg{G}$.  In such a context, \cite[Construction 12.11]{BD} gives a functor,
$$\partial \From \Cat{CExt}(\alg{G}, \alg{K}_2) \To \Cat{CExt}(\alg{\bar G}_x, \alg{\bar G}_m).$$
Applied to $\alg{K}_2 \into \alg{G}' \onto \alg{G}$, this functor gives a central extension of {\em affine algebraic groups} over $\F_q$,
$$\alg{\bar G}_m \into \alg{\bar G}_x' \onto \alg{\bar G}_x.$$
Such an extension splits uniquely over the unipotent radical $\alg{\bar U}_x$ of $\alg{\bar G}_x$.  Thus it arises as the pullback of a central extension of the standard Levi subgroup,
$$\alg{\bar G}_m \into \alg{\bar M}_x' \onto \alg{\bar M}_x.$$
We say that $\alg{\bar M}_x'$ is the {\em residual extension} at $x$.  Taking $\F_q$-points yields a central extension,
\begin{equation}
\label{residMx}
\F_q^\times \into \bar M_x' \onto \bar M_x.
\end{equation}

The results of \cite[\S 12.8-12.2]{BD} connect the residual extension \eqref{residMx} to the cover of the parahoric in \eqref{coverGx}.  The full tame extension of the parahoric is the pullback of the residual extension, by the projection map $G_x \onto G_x / G_x^+ = \bar M_x$.
\begin{equation}
\label{respullback}
\begin{tikzcd}
\F_q^\times \inarrow{r}  \arrow{d}{=} & G_x' \onarrow{r} \arrow{d}{\pr} & G_x \onarrow{d}{\pr = \text{projection mod } G_x^+} \\
\F_q^\times \arrow{r} & \bar M_x' \arrow{r} & \bar M_x
\end{tikzcd}
\end{equation}
Thus to know the extension $G_x'$, it suffices to know the residual extension $\alg{\bar M}_x'$ as a central extension of $\alg{\bar M}_x$ by $\alg{\bar G}_m$ over $\F_q$. 

\subsection{Description of the residual extension}
\label{ResSection}

Central extensions of reductive groups by the multiplicative group are classified in \cite{We3}; we review this classification here.  Recall that $\alg{\bar T}$ is a maximal torus in $\alg{\bar M}_x$, and $Y$ is the cocharacter lattice of $\alg{\bar T}$.  The central extension $\alg{\bar G}_m \into \alg{\bar M}_x' \onto \alg{\bar M}_x$ restricts to an extension of tori $\alg{\bar G}_m \into \alg{\bar T}' \onto \alg{\bar T}$, which gives an extension of $\Z[\Fr]$-modules
\begin{equation}
\label{extY}
\Z \into Y' \onto Y.
\end{equation}

Write $\alg{\bar M}_x^{\SC}$ for the simply-connected cover of the derived subgroup of $\alg{\bar M}_x$.  Let $Y_x^{\SC}$ be the $\Z$-span of $\Phi_x^\vee$ in $Y$.  Pulling back via $\alg{\bar M}_x^{\SC} \To \alg{\bar M}_x$, the central extension
$$\alg{\bar G}_m \into (\alg{\bar M}_x^{\SC})' \onto \alg{\bar M}_x^{\SC}$$
splits uniquely.  This gives a canonical splitting $\iota_x \From Y_x^{\SC} \into Y'$ of \eqref{extY} over $Y_x^{\SC}$.  

The pair $(Y', Y_x^{\SC} \into Y')$ determines the residual extension up to unique isomorphism.  More precisely, \cite{We3} demonstrates that the Picard category $\Cat{CExt}(\alg{\bar M}_x, \alg{\bar G}_m)$ is equivalent (via the construction above) to the category of such pairs $(Y', \iota_x \From Y_x^{\SC} \into Y')$, where $Y'$ is an extension of $Y$ by $\Z$ (as $\Z[\Fr]$-modules), and $Y_x^{\SC} \into Y'$ is a splitting of the extension over $Y_x^{\SC}$.  

\begin{prop}
Suppose that $\alg{\bar M}_x$ is split and the derived subgroup of $\alg{\bar M}_x$ is simply-connected.  Then the central extension $\alg{\bar G}_m \into \alg{\bar M}_x' \onto \alg{\bar M}_x$ splits.
\end{prop}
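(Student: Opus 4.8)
The plan is to reduce this to a statement about lattices, using the classification recalled in \S\ref{ResSection}. Since $\alg{\bar M}_x$ is split, $\Fr$ acts trivially on $\alg{\bar T}$ and hence on the lattices below, so we may work with abelian groups in place of $\Z[\Fr]$-modules. By the equivalence of \cite{We3}, the extension $\alg{\bar G}_m \into \alg{\bar M}_x' \onto \alg{\bar M}_x$ corresponds to a pair $(Y', \iota_x)$ consisting of an extension of abelian groups $\Z \into Y' \onto Y$ together with a splitting $\iota_x \From Y_x^{\SC} \into Y'$ over the coroot lattice, and the \emph{trivial} extension corresponds to the pair $(Y \oplus \Z,\ y \mapsto (y,0))$. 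Thus it suffices to show that the given pair $(Y', \iota_x)$ is isomorphic, as an extension of $Y$ by $\Z$ equipped with its distinguished splitting over $Y_x^{\SC}$, to this trivial pair.

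First I would pick an arbitrary splitting $s \From Y \into Y'$ of $\Z \into Y' \onto Y$, which exists because $Y$ is free of finite rank. Using $s$ to identify $Y' \isom Y \oplus \Z$, the given splitting $\iota_x$ takes the form $y \mapsto (y, \phi(y))$ for a unique homomorphism $\phi \From Y_x^{\SC} \To \Z$. Any other splitting of $\Z \into Y' \onto Y$ differs from $s$ by some element of $\Hom(Y, \Z)$, and it restricts to $\iota_x$ on $Y_x^{\SC}$ exactly when that element restricts to $\phi$. Hence $(Y', \iota_x)$ is trivial as soon as $\phi$ extends to a homomorphism $Y \To \Z$ — equivalently, as soon as $Y_x^{\SC}$ is a direct summand of $Y$, i.e.\ $Y / Y_x^{\SC}$ is torsion-free.

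It remains to verify this torsion-freeness, and this is the one point where the hypothesis on the derived subgroup is used. Identify $\alg{\bar M}_x^{\SC}$ with the (simply-connected) derived subgroup of $\alg{\bar M}_x$, and set $\alg{\bar D} \defeq \alg{\bar M}_x / \alg{\bar M}_x^{\SC}$, a torus. Since $\alg{\bar M}_x = \alg{\bar T} \cdot \alg{\bar M}_x^{\SC}$, the torus $\alg{\bar T}$ surjects onto $\alg{\bar D}$ with kernel the maximal torus $\alg{\bar T} \cap \alg{\bar M}_x^{\SC}$ of $\alg{\bar M}_x^{\SC}$. Taking cocharacter lattices in the short exact sequence of tori $1 \To \alg{\bar T} \cap \alg{\bar M}_x^{\SC} \To \alg{\bar T} \To \alg{\bar D} \To 1$ yields a short exact sequence of lattices whose first term is the cocharacter lattice of $\alg{\bar T} \cap \alg{\bar M}_x^{\SC}$; because $\alg{\bar M}_x^{\SC}$ is simply-connected, that cocharacter lattice coincides with the coroot lattice, which (having the same coroots as $\alg{\bar M}_x$) is $Y_x^{\SC}$. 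Therefore $Y / Y_x^{\SC}$ is isomorphic to the cocharacter lattice of $\alg{\bar D}$, which is free. This produces the required extension of $\phi$, hence the splitting of $\alg{\bar M}_x'$.

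I expect the only real care needed to be the bookkeeping that identifies ``$\alg{\bar M}_x'$ splits as a central extension of algebraic groups'' with ``the associated pair $(Y', \iota_x)$ is trivial''; once the classification of \S\ref{ResSection} is granted, everything else is formal, and the simply-connectedness hypothesis enters exactly once, to rule out torsion in $Y / Y_x^{\SC}$.
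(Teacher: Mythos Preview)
Your argument is correct and follows essentially the same approach as the paper: reduce via the classification of \S\ref{ResSection} to the pair $(Y',\iota_x)$, use that $Y/Y_x^{\SC}$ is free (from the simply-connected hypothesis) to extend $\iota_x$ to a full splitting $Y\into Y'$, and conclude. The paper's proof is much terser—it simply asserts that freeness of $Y/Y_x^{\SC}$ allows $\iota_x$ to extend—while you spell out the intermediate step of first splitting $Y'\cong Y\oplus\Z$ and then extending the resulting $\phi$, and you also justify the freeness via the quotient torus $\alg{\bar D}$; but the underlying idea is identical.
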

\begin{proof}
If the derived subgroup of $\alg{\bar M}_x$ is simply-connected, then $Y / Y_x^{\SC}$ is free.  It follows that any splitting $\iota_x \From Y_x^{\SC} \into Y'$ extends to a splitting $Y \into Y'$.  Thus the pair $(Y', \iota_x)$ is trivial, and so the central extension is also trivial.  
\end{proof}   

To describe the residual extension in general, it remains to describe $Y'$ and $\iota_x \From Y_x^{\SC} \into Y'$.
\begin{thm}
\label{parahoric-cover}
The hyperspecial point $\circ$ identifies $Y' = Y \oplus \Z$, and $\iota_x \From Y_x^{\SC} \into Y'$ with the unique homomorphism satisfying
$$\iota_x(\alpha^\vee) = \left( \alpha^\vee, \alpha(x) \cdot Q(\alpha^\vee) \right) \text{ for all } \alpha \in \Phi_x.$$
\end{thm}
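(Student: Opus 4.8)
The plan is to reduce the statement to a computation with the K-theory central extension $\sch{G}'_x$ restricted to the torus $\sch{T}$, using the Brylinski-Deligne classification of torus extensions together with the explicit description of the residue symbol $\partial\colon \alg{K}_2(F) \onto \F_q^\times$. First I would recall that pulling back \eqref{cextG} along the cocharacter $\sch{T} \supset \sch{G}_m \xrightarrow{y} \sch{T}$, for $y \in Y$, gives a central extension of $\sch{G}_m$ by $\alg{K}_2$ over $\O$; over $F^{\unr}$ these are classified by the quadratic form $Q$, and the commutator pairing of two such cocharacters $y_1, y_2$ is governed by $B_Q(y_1,y_2)$ via the symbol $\{\cdot,\cdot\}$. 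Applying the tame symbol $\partial$ and taking $\Fr$-fixed points as in \eqref{fulltame}, this identifies the extension $Y' = \sch{X}_\ast(\alg{\bar T}')$ of $Y$ by $\Z$ together with its Frobenius action. The hyperspecial splitting $\iota_\circ$ over $G_\circ$, hence over $\sch{T}(\O)$, is exactly what lets us write $Y' = Y \oplus \Z$ canonically: the second coordinate records the "residue" discrepancy, which vanishes on the integral points placed at $\circ$.

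Next I would compute $\iota_x(\alpha^\vee)$ directly. The point is that $\iota_x$ is defined as the unique splitting of $Y' \onto Y$ over $Y_x^{\SC}$ coming from the canonical splitting of $(\alg{\bar M}_x^{\SC})'$ over $\alg{\bar M}_x^{\SC}$. Concretely, for $\alpha \in \Phi_x$, the coroot $\alpha^\vee = h_\alpha\colon \sch{G}_m \To \sch{T}$ lifts into the parahoric cover $\sch{G}'_x$ via the SL$_2$ (or rather the $\alpha$-root SL$_2$) sitting inside $\alg{\bar M}_x$, because $\alpha(x) \in \Z$ means $\sch{U}_\alpha$ and $\sch{U}_{-\alpha}$ contribute to the parahoric at $x$. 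The lift $h_\alpha^x(t)$ built from $e_\alpha, e_{-\alpha}$ along the facet containing $x$ differs from the hyperspecial lift $\iota_\circ(h_\alpha(t))$ by a $\F_q^\times$-valued cocycle; evaluating that discrepancy amounts to computing $\partial$ of a Steinberg symbol $\{t, \varpi^{\alpha(x)}\}$-type expression, and the Brylinski-Deligne formula for the pullback of $\sch{G}'$ along $h_\alpha$ shows this symbol is raised to the power $Q(\alpha^\vee)$. Tracking powers of $\varpi$ through $\partial\{u,v\} = (-1)^{\val(u)\val(v)} \pr(v^{\val(u)}/u^{\val(v)})$ then yields precisely the factor $t^{\alpha(x) \cdot Q(\alpha^\vee)}$, i.e. the second coordinate $\alpha(x)\cdot Q(\alpha^\vee)$ in the identification $Y' = Y \oplus \Z$. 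Uniqueness of $\iota_x$ on $Y_x^{\SC}$ (it is generated by the $\alpha^\vee$, $\alpha \in \Phi_x$, together with the requirement of being a group homomorphism compatible with the Weyl and Frobenius structure) then forces the stated formula.

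The main obstacle I expect is bookkeeping the normalizations: the lift $h_\alpha$ depends on the choice of pinning $e_{\pm\alpha}$, the parahoric group scheme $\sch{G}_x$ requires conjugating these pinnings by $\varpi^x$ (an element of $\alg{T}(F^{\unr})$), and one must check that the resulting ambiguities cancel so that the formula $\iota_x(\alpha^\vee) = (\alpha^\vee, \alpha(x) Q(\alpha^\vee))$ is independent of all choices and genuinely Frobenius-equivariant. Concretely this means verifying that the Brylinski-Deligne ``commutator = $B_Q$'' and ``conjugation'' formulas from \cite{BD} interact correctly with the residue symbol, and that the sign $(-1)^{\val(u)\val(v)}$ does not contribute (it lands in $\pm 1 \subset \F_q^\times$ but is killed because $Q(\alpha^\vee)$ times the relevant valuations is even, or is absorbed into the canonical splitting). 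Once these compatibilities are in hand, the remaining argument is a short and essentially formal consequence of the classification recalled in \S\ref{ResSection}. I would also remark that the case $\alpha(x) = 0$, i.e. $\alpha$ a root of the hyperspecial Levi, recovers the already-known splitting $\iota_\circ$, providing a consistency check on the normalization.
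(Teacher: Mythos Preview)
Your plan is essentially the paper's proof: identify $Y' = Y \oplus \Z$ via the canonical splitting at $\circ$, then compute $\iota_x(\alpha^\vee)$ by comparing the parahoric lift $\bar h_{x,\alpha}'(\bar t)$ (built from $e_\alpha'(t\varpi^{\alpha(x)})$ etc.) with the hyperspecial lift, using the Brylinski--Deligne cocycle $h_\alpha'(u)h_\alpha'(v) = \partial\{u,v\}^{Q(\alpha^\vee)} h_\alpha'(uv)$. One small correction on your bookkeeping worry: the sign $(-1)^{\val(u)\val(v)}$ disappears not for parity reasons but simply because the relevant symbol is $\partial\{t,\varpi\}$ with $t \in \O^\times$, so $\val(t)=0$; the paper's explicit manipulation $h_\alpha'(t\varpi^k)h_\alpha'(\varpi^k)^{-1} = h_\alpha'(t)\cdot\partial\{t,\varpi\}^{kQ(\alpha^\vee)}$ makes this transparent and avoids any need to track conjugation by $\varpi^x$.
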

\begin{proof}
At the hyperspecial base point $\circ$, we find the pair $(Y', \iota_\circ \From Y_\circ^{\SC} \into Y')$.  The extension $\Z \into Y' \onto Y$ is the same at $\circ$ as at $x$, since the cover of the torus $\alg{T}$ is the same everywhere in the apartment.  Since $\circ$ is a hyperspecial point, $Y_\circ^{\SC} = Y^{\SC}$ is the span of the coroots in $Y$.  But since the cover $\sch{G}'$ is defined over $\O$, the residual extension at $\circ$  
$$\alg{\bar G}_m \into \alg{\bar G}' \onto \alg{\bar G}$$
splits canonically.  (See \cite[\S 3]{We3} for details).  Thus $(Y', Y_\circ^{\SC} \into Y')$ is canonically isomorphic to the pair $(Y \oplus \Z, \iota_\circ)$ where $\iota_\circ(\alpha^\vee) = (\alpha^\vee, 0)$ for every coroot $\alpha^\vee$.

To determine $\iota_x \From Y_x^{\SC} \into Y'$, we have to work a bit harder.  Given $\alpha \in \Phi_x$, choose $e_\alpha, e_{-\alpha}$ as before.  Noting that the formula in the theorem is $\Fr$-invariant, we may prove the result by passing to an unramified extension of $\O$ over which $\sch{G}$ is split.  Thus we assume without loss of generality that $\sch{G}$ is split over $\O$, and thus $e_\alpha, e_{-\alpha}$ are defined over $\O$ in what follows.

The central extension $\sch{K}_2 \into \sch{G}' \onto \sch{G}$ splits uniquely over every unipotent subgroup.  Splitting over the root subgroups $\alg{U}_\alpha$, we find canonical lifts of $e_\alpha, e_{-\alpha}$,
$$e_\alpha' \From \sch{G}_a \To \sch{U}_\alpha', \quad e_{-\alpha}' \From \sch{G}_a \To \sch{U}_{-\alpha}'.$$
Define lifts $n_\alpha'$ and $h_\alpha'$ analogously to $n_\alpha, h_\alpha$, by replacing $e_\alpha$ by $e_\alpha'$ and $e_{-\alpha}$ by $e_{-\alpha}'$ wherever they occur.  In the full tame extension, we find maps
$$e_\alpha' \From F \To U_\alpha', \quad n_\alpha' \From F^\times \To N', \quad h_\alpha' \From F^\times \To T'$$
Here $U_\alpha'$, $N'$, and $T'$ are the preimages of $U_\alpha$, $N$, and $T$ in the full tame extension $\F_q^\times \into G' \onto G$.  

While the map $h_\alpha \From F^\times \To T$ was a homomorphism, the lift $h_\alpha'$ is not necessarily a homomorphism.  But the computations of Brylinski and Deligne (see \cite[\S 11.1.5]{BD}) give a formula,
\begin{equation}
\label{hcocyc}
h_\alpha'(u) \cdot h_\alpha'(v) = \partial \{ u,v \}^{Q(\alpha^\vee)} \cdot h_\alpha'(uv).
\end{equation}

Returning to the parahoric $G_x$, if $t \in \O^\times$ and $k = \alpha(x)$, then $-k = -\alpha(x)$ and 
$$e_\alpha'(t \varpi^k) \in G_x', \quad e_{-\alpha}'(- t^{-1} \varpi^{-k}) \in G_x'.$$
Projection $\pr \From G_x' \onto \bar M_x'$ yields well-defined elements of $\bar M_x'$ for all $t \in \bar \F_q^\times$,
$$\bar e_{x, \alpha}'(\bar t) = \pr \left( e_\alpha'(t \varpi^k) \right) \in \bar M_x', \quad \bar e_{x, -\alpha}'(-\bar t^{-1}) = \pr \left( e_\alpha'(-t^{-1} \varpi^{-k}) \right) \in \bar M_x' .$$

Define, by adapting previous definitions,
$$\bar n_{x,\alpha}'(\bar t) = \bar e_{x, \alpha}'( \bar t) \bar e_{x, -\alpha}'(-\bar t^{-1}) \bar e_{x, \alpha}'(\bar t) \in \bar N_x',$$
$$\bar h_{x, \alpha}'(\bar t) = \bar n_{x,\alpha}'(\bar t) \bar n_{x, \alpha}'(-1) \in \bar T'.$$
Here $\bar N_x'$ denotes the normalizer of $\bar T$ in $\bar M_x'$.

The map $\bar h_{x, \alpha'}$ defines a {\em homomorphism},
$$\bar h_{x, \alpha} \From \F_q^\times \To \bar M_x',$$
The residual extension splits uniquely when pulled back to $\alg{\bar M}_x^{\SC}$, and so it must be compatible with $\bar e_{x, \alpha}'$ on root subgroups (identifying root subgroups of $\alg{\bar M}_x^{\SC}$ with those of $\alg{\bar M}_x$).  It follows that 
$$\bar h_{x, \alpha}'(\bar t) = \iota_x(\alpha^\vee) (\bar t) \text{ for all } \bar t \in \F_q^\times.$$

Explicitly, using the general identity $n_\alpha'(t) \cdot n_\alpha'(-t) = 1$,
\begin{align*}
\bar h_{x, \alpha}'(\bar t) &= \pr \left( n_\alpha'(t \varpi^k) n_\alpha'(- \varpi^k) \right),  \\
&= \pr \left( n_\alpha'(t \varpi^k) n_\alpha'(-1) n_\alpha'(1) n_\alpha'(- \varpi^k) \right) \text{ since } n_\alpha'(-1) n_\alpha'(1) = 1, \\
&= \pr \left( h_\alpha'(t \varpi^k) \cdot h_\alpha'(\varpi^{k})^{-1} \right), \\
&= \pr \left( h_\alpha'(t) \cdot \partial \{ t, \varpi \}^{k Q(\alpha^\vee)} \right) \text{ by \eqref{hcocyc}}, \\
&= \bar h_\alpha(\bar t) \cdot (\bar t)^{k Q(\alpha^\vee)}.
\end{align*}
But $\bar h_\alpha(\bar t) = \iota_\circ(\alpha^\vee)(\bar t)$.  Recalling that $k = \alpha(x)$, it follows that
$$\iota_x(\alpha^\vee) = \left( \alpha^\vee, \alpha(x) \cdot Q(\alpha^\vee) \right).$$
\end{proof}

\begin{cor}
If $\alg{G}$ has simply-connected derived subgroup, then at every point $x$ in the building, the residual extension $\alg{\bar M}_x'$ has simply-connected derived subgroup.
\end{cor}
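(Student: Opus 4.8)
The plan is to translate the assertion into a statement about cocharacter lattices, settle the formal part by a diagram chase, and isolate one combinatorial input.

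First I would invoke the standard criterion that a connected reductive group over a field has simply-connected derived subgroup precisely when the sublattice spanned by its coroots is a direct summand of the cocharacter lattice of a maximal torus, i.e.\ when the quotient of the two is torsion-free. For $\alg{G}$ this is exactly the hypothesis: $Y/Y^{\SC}$ is torsion-free. For the residual extension $\alg{\bar M}_x'$, its maximal torus $\alg{\bar T}'$ has cocharacter lattice $Y'$; since the extension splits uniquely over $\alg{\bar M}_x^{\SC}$, the root subgroups of $\alg{\bar M}_x'$ are those of $\alg{\bar M}_x$, and by the computation in the proof of Theorem~\ref{parahoric-cover} (the identity $\bar h_{x,\alpha}'=\iota_x(\alpha^\vee)$) the coroot of $\alg{\bar M}_x'$ attached to $\alpha\in\Phi_x$ is $\iota_x(\alpha^\vee)$. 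Hence the coroot lattice of $\alg{\bar M}_x'$ is $\iota_x(Y_x^{\SC})$, and what must be shown is that $Y'/\iota_x(Y_x^{\SC})$ is torsion-free.

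Next I would feed in Theorem~\ref{parahoric-cover}, which identifies $Y'=Y\oplus\Z$ with $\iota_x(\alpha^\vee)=\bigl(\alpha^\vee,\alpha(x)Q(\alpha^\vee)\bigr)$ for $\alpha\in\Phi_x$. Two things follow at once: the projection $Y'\to Y$ is injective on $\iota_x(Y_x^{\SC})$, so $\iota_x(Y_x^{\SC})\cap(0\oplus\Z)=0$; and each $\iota_x(\alpha^\vee)$ equals $(\alpha^\vee,0)$ modulo the central summand, so $\iota_x(Y_x^{\SC})+(0\oplus\Z)=Y_x^{\SC}\oplus\Z$. These give a short exact sequence
$$0\longrightarrow\Z\longrightarrow Y'/\iota_x(Y_x^{\SC})\longrightarrow Y/Y_x^{\SC}\longrightarrow 0 ,$$
so it suffices to prove $Y/Y_x^{\SC}$ is torsion-free; and since $Y_x^{\SC}\subseteq Y^{\SC}\subseteq Y$ with $Y/Y^{\SC}$ already torsion-free, it suffices to prove $Y^{\SC}/Y_x^{\SC}$ is torsion-free.

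The remaining point — and the step I expect to be the real obstacle — is to show that the coroot lattice $\Z\Phi_x^\vee$ of the subsystem $\Phi_x=\{\alpha\in\Phi:\alpha(x)\in\Z\}$ attached to the parahoric is saturated in $\Z\Phi^\vee$. This is purely combinatorial, depending only on the (absolute) affine root system, and I would attack it factor-by-factor on the simple components of $\Phi_x$, using the classification of parahorics by subsets of the affine Dynkin diagram to reduce to a bounded case check. One should be wary here: if for some $\alg{G}$ of exceptional type this naive saturation fails and $[\Z\Phi^\vee:\Z\Phi_x^\vee]$ is nontrivial, the argument must not discard the twist $\alpha(x)Q(\alpha^\vee)$ in the previous step, but instead show directly that the resulting torsion class in $Y/Y_x^{\SC}$ does not lift to a torsion class in $Y'/\iota_x(Y_x^{\SC})$ — which comes down to a divisibility relation between $Q$ evaluated on the offending coroots and the order of the torsion. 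Controlling that interaction between the twist and the index $[Y^{\SC}:Y_x^{\SC}]$ is the delicate part; everything else is formal.
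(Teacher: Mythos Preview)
Your approach coincides with the paper's: both reduce to showing that $\Lambda_x=\iota_x(Y_x^{\SC})$ is saturated in $Y'=Y\oplus\Z$, and both project to $Y$ to pass through the saturation of $Y_x^{\SC}$ in $Y$. Where you pause to flag the step ``$Y/Y_x^{\SC}$ is torsion-free'' as the real obstacle, the paper simply asserts it: ``Since $\alg{G}$ has simply-connected derived subgroup, $\alg{\bar M}_x$ has simply-connected derived subgroup.''

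Your caution is justified, and the issue is not confined to exceptional types. Take $\alg{G}=\alg{Spin}_8$ (type $D_4$, simply-connected) and let $x$ be the vertex obtained by deleting the branch node of the affine diagram, so that $\Phi_x$ has type $A_1^4$ with simple roots $\alpha_1,\alpha_3,\alpha_4,-\tilde\alpha$. A direct computation gives $[Y^{\SC}:Y_x^{\SC}]=2$, with $\alpha_2^\vee$ representing the nontrivial class; hence $\alg{\bar M}_x$ does \emph{not} have simply-connected derived subgroup here, and the paper's intermediate assertion fails as stated.

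Your proposed fallback---checking whether such torsion lifts through the twist $\alpha(x)Q(\alpha^\vee)$---is exactly the right diagnosis, and it shows that the corollary itself needs a hypothesis. In the $D_4$ example one has $\alpha_i(x)=0$ for $i=1,3,4$ and $\tilde\alpha(x)=1$, so
\[
(2\alpha_2^\vee,\,-Q(\alpha^\vee))=(-\tilde\alpha^\vee,-Q(\alpha^\vee))-(\alpha_1^\vee,0)-(\alpha_3^\vee,0)-(\alpha_4^\vee,0)\in\Lambda_x,
\]
and this equals $2(y,t)$ for some $(y,t)\in Y'$ precisely when $Q(\alpha^\vee)$ is even. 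When it is, $(\alpha_2^\vee,-Q(\alpha^\vee)/2)$ gives genuine $2$-torsion in $Y'/\Lambda_x$, so $\alg{\bar M}_x'$ does \emph{not} have simply-connected derived subgroup. The ``divisibility relation between $Q$ and the order of the torsion'' that you anticipate is therefore not automatic; your outline is more careful than the paper's proof on exactly this point, and you have in effect located a gap rather than left one.
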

\begin{proof}
The coroot lattice for $\alg{\bar M}_x'$ is the lattice $\Lambda_x$ spanned by the set $\{ (\alpha^\vee, \alpha(x) \cdot Q(\alpha^\vee) ) : \alpha \in \Phi_x \}$.  If $y' = (y, t) \in Y' = Y \oplus \Z$ and $k \cdot y' \in \Lambda_x$ for some $k > 0$, then we would have 
$$ky \in Y_x^{\SC} = \text{Span} \{ \alpha^\vee : \alpha \in \Phi_x \}.$$  
Since $\alg{G}$ has simply-connected derived subgroup, $\alg{\bar M}_x$ has simply-connected derived subgroup, so $Y / Y_x^{\SC}$ is free and $Y_x^{\SC}$ is saturated.  Hence $ky \in Y_x^{\SC}$ implies $y \in Y_x^{\SC}$.

It follows that $ky' = (ky, kt)$ for some $y \in Y_x^{\SC}$.  Also, note that $\iota_x(ky) = (ky, ku)$ for some $u \in \Z$.  Since $k y' \in \Lambda_x$ and $\iota_x(y) \in \Lambda_x$, we find that the difference,
$$(0, kt - ku) \in \Lambda_x.$$
But this implies $kt - ku = 0$, so $t = u$, and $y' = \iota_x(y) \in \Lambda_x$.  We have proven that $k y' \in \Lambda_x$ implies $y' \in \Lambda_x$, so $\Lambda_x$ is saturated.  Thus the derived subgroup of $\alg{\bar M}_x'$ is simply-connected.
\end{proof}
 
\section{Generic genuine representations}

Recall that $\sch{S}$ is a maximal split torus in $\sch{G}$ over $\O$, and $\sch{T}$ is its centralizer.  In particular, $\sch{T}$ splits over an unramified extension of $F$, and its cocharacter lattice $Y$ becomes a $\Z[\Fr]$-module.  $\sch{B} = \sch{T} \sch{U}$ is a Borel subgroup of $\sch{G}$, defined over $\O$.  Fix a central extension $\alg{K}_2 \into \sch{G}' \onto \sch{G}$ over $\O$ in what follows.  

Write $\mu_n$ for the group of $n\textsuperscript{th}$ roots of unity in $F^\times$.  As in \cite{We4}, we assume $n$ divides $q-1$.  We identify $\mu_n$ with the group of $n\textsuperscript{th}$ roots of unity in $\F_q^\times$ when convenient (via the Teichm\"uller lift).  The full tame extension is a central extension, $\F_q^\times \into G' \onto G$.  If we push out $G'$ via the $(q-1)/n$ power map $\F_q^\times \onto \mu_n$, we obtain a topological central extension
\begin{equation}
\label{tildeG}
\mu_n \into \tilde G \onto G.
\end{equation}

In what follows, we study only {\em smooth} representations of $\tilde G$ (a locally compact, totally disconnected group) on {\em complex} vector spaces.  Fix an injective character $\epsilon \From \mu_n \into \C^\times$ throughout.  If $\tilde H$ is a subgroup of $\tilde G$ containing $\mu_n$, a representation of $\tilde H$ is called {\em genuine} if the central $\mu_n$ acts by the character $\epsilon$.  We begin by recalling the genuine representation theory of covers of tori.

\subsection{Covers of tori}

The pullback of $\tilde G$ gives a cover of $T$, which is a group of Heisenberg type,
$$\mu_n \into \tilde T \onto T.$$
While $T = \alg{T}(F)$ is an abelian group, the extension $\mu_n \into \tilde T \onto T$ is often nonabelian.  (In the classical case $\tilde G = Mp_{2n}(F)$, the extension $\tilde T$ is abelian, which explains many of the similarities between the metaplectic group and linear groups.)  Its center $Z(\tilde T)$ has finite index in $\tilde T$, and has been characterized in \cite{We1} and \cite{We2}.  As a group of Heisenberg type, the irreducible genuine representations of $\tilde T$ are in bijection with the genuine characters of its center (by taking central character).  

In characterizing the center, a special role is played by the sublattice 
$$Y_{Q,n} \defeq \set{y\in Y: B_Q(y, y')\in n\Z \text{ for all } y' \in Y} \subset Y.$$

  By \cite[Corollary 3.4]{We2}, every irreducible genuine representation of $\tilde T$  has the same (finite) dimension $\Zind(\tilde T)$, and
\begin{equation} \label{ToriH}
\Zind(\tilde T) = \sqrt{ \# (\tilde T / Z(\tilde T)) } = \# (Y^\Fr / Y_{Q,n}^\Fr).
\end{equation}
We call the positive integer $\Zind(\tilde T)$ the {\em central index} of $\tilde T$.


 
\subsection{Whittaker functionals}

Since central extensions of reductive groups by $\alg{K}_2$ split uniquely over unipotent subgroups, we may view $U$ as a subgroup of $\tilde G$, and $\tilde B = \tilde T \ltimes U$.  The conjugation action of $\tilde T$ on $U$ factors through $T$.  By this conjugation action, $\tilde T$ or $T$ acts on the space of characters $\psi \From U \To \C^\times$; define ${}^t \psi(u) = \psi(t^{-1} u t)$.  

We follow Casselman and Shalika \cite[\S 3]{CS} to decompose characters $\psi \From U \To \C^\times$ along root subgroups.  Let $\Phi_{\rel}$ be the set of relative roots, i.e., roots for the adjoint action of the maximal split torus $\alg{S}$ on the Lie algebra $\Lie{g}$ of $\alg{G}$.  For any relative root $\beta \in \Phi_{\rel}$, one may define a root subgroup $\alg{U}_\beta$, defined over $F$ (in fact, over $\OO$).  It is a unipotent group, whose Lie algebra is $\Lie{g}_\beta \oplus \Lie{g}_{2 \beta}$ (the latter occurring if and only if $2 \beta \in \Phi_{\rel}$).  

Let $\alg{S}_\beta$ be the neutral component of the kernel of $\beta$, and let $\alg{G}_\beta$ be the derived subgroup of the centralizer of $\alg{S}_\beta$ in $\alg{G}$.   Then $\alg{G}_\beta$ is a semisimple unramified group of $F$-rank $1$, containing the root subgroups $\alg{U}_{\pm \beta}$.  Depending on whether $2 \beta \in \Phi_{\rel}$ or $2 \beta \not \in \Phi_{\rel}$, there is an isogeny
$$f_\beta \From \alg{SU}_3^{F_\beta} \onto \alg{G}_\beta \text{ or } f_\beta \From \alg{SL}_2^{F_\beta} \onto \alg{G}_\beta,$$
where $F_\beta \subset F^{\unr}$ is a finite unramified extension of $F$, and the groups above denote the restriction of scalars from $F_\beta$ down to $F$ of a quasisplit unramified unitary group $\alg{SU}_3$ or $\alg{SL}_2$, respectively.  

Write $E_\beta$ for the quadratic unramified extension of $F_\beta$ in $F^{\unr}$ if $2 \beta \in \Phi_{\rel}$, or write $E_\beta = F_\beta$ otherwise.  With this notation, the isogeny $f_\beta$ determines an isomorphism in both cases,
$$f_\beta \From U_\beta / U_{2 \beta} \xrightarrow{\sim} E_\beta.$$
Such isomorphisms are not determined ``on the nose'' by our original data $\sch{G}, \sch{B}, \sch{T}$.  But they are determined up to scaling by $\OO_{E_\beta}^\times$, as $U_\beta / U_{2 \beta}$ has a distinguished compact open subgroup obtained by intersecting with $G_\circ = \sch{G}(\OO)$.
In particular, there is a well-defined valuation,
$$\val_\beta \From U_\beta / U_{2 \beta} \To \Z \cup \{ \infty \},$$
obtained from the valuation on $E_\beta$ via $f_\beta$.  Here the valuation is normalized so that $\val(F^\times) = \val((F^{\unr})^\times) = \Z$.

The $F$-Borel subgroup $\alg{B}$ determines a subset $\Phi_{\rel}^+ \subset \Phi_{\rel}$ of positive relative roots, and simple relative roots $\Delta_{\rel} \subset \Phi_{\rel}^+$.  Then there is natural isomorphism,
$$\alg{U} / [ \alg{U}, \alg{U} ] \isom \prod_{\beta \in \Delta_{\rel}} \alg{U}_\beta / \alg{U}_{2 \beta}.$$
In this way, every character $\psi \From U \To \C^\times$ can be uniquely decomposed:
$$\psi = \prod_{\beta \in \Delta_{\rel}} \psi_\beta, \quad \psi_\beta \From U_\beta / U_{2 \beta} \To \C^\times.$$

\begin{dfn}
The character $\psi \From U \To \C^\times$ is called {\em generic} if $\psi_\beta$ is nontrivial for every $\beta \in \Delta_{\rel}$.  (Rodier \cite[\S II.2]{Rod2} and Casselman-Shalika \cite[\S 3]{CS} and others use the term {\em principal}.)
\end{dfn}

\begin{dfn}
The {\em conductor} of a generic character $\psi \From U \To \C^\times$ is the family of integers $\{ \cond_\beta(\psi) : \beta \in \Delta_{\rel} \}$, where 
$$\cond_\beta(\psi) = \min \{ n \in \Z : u \in U_\beta / U_{2 \beta} \text{ and } \val(u) \geq n \Longrightarrow \psi_\beta(u) = 1\}.$$
\end{dfn}


For what follows, it is helpful to know how the adjoint action of $T$ affects the conductor of a generic character.  Note that since $\alg{G}$ is quasisplit, and splits over $F^{\unr}$, the set of relative simple roots $\Delta_{\rel}$ can be identified with the set of $\Fr$-orbits on $\Delta$.  The identification is given simply by restricting a character of $\alg{T}$ to one of $\alg{S}$.
\begin{prop}
If $t \in T$, and $\alpha \in \Delta$ is a simple absolute root restricting to the simple relative root $\beta \in \Delta_{\rel}$, then
$$\cond_\beta({}^t \psi) = \cond_\beta(\psi) + \val(\alpha(t)).$$
Here we view $\alpha(t) \in (F^{\unr})^\times$, since $\alg{G}$ splits over $F^{\unr}$.
\end{prop}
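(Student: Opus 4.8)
\section*{Proof strategy}

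The plan is to determine how conjugation by $t \in T$ acts on the root quotient $U_\beta/U_{2\beta}$ through the isomorphism $f_\beta \From U_\beta/U_{2\beta} \xrightarrow{\sim} E_\beta$, and then to read off the effect on $\cond_\beta$ directly from the definition. As a preliminary step I would reformulate the conductor geometrically: for each $n \in \Z$ the set $L_n := \{ u \in U_\beta/U_{2\beta} : \val_\beta(u) \geq n \}$ is a subgroup, $f_\beta$ carries $L_n$ onto the $\OO_{E_\beta}$-lattice $\mathfrak{p}_{E_\beta}^n$, and $L_{n+1} \subset L_n$ with $\bigcup_n L_n = U_\beta/U_{2\beta}$; since $E_\beta/F$ is unramified the valuation $\val_\beta$ agrees with the restriction of $\val$ on $(F^{\unr})^\times$. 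With this language, $\cond_\beta(\psi) = \min\{ n : \psi_\beta|_{L_n} = 1 \}$, and likewise for ${}^t\psi$.

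The key step is the equivariance statement: for $t \in T$ and $u \in U_\beta/U_{2\beta}$ one has $f_\beta(t u t^{-1}) = \alpha(t) \cdot f_\beta(u)$, with $\alpha(t) \in E_\beta^\times \subset (F^{\unr})^\times$. Since the left-hand side is visibly $F$-linear in $u$, and the claim is insensitive to the $\OO_{E_\beta}^\times$-ambiguity in $f_\beta$, I would verify it after base change to $F^{\unr}$, where $\alg{G}$ splits: there $\Lie{g}_\beta$ decomposes as $\bigoplus_{\alpha'} \Lie{g}_{\alpha'}$ over the absolute roots $\alpha'$ restricting to $\beta$ (i.e. the $\Fr$-orbit of $\alpha$ contributing to $\Lie{g}_\beta$), the torus $\alg{T}$ acts on $\Lie{g}_{\alpha'}$ by the character $\alpha'$, and the isomorphism $E_\beta \otimes_F F^{\unr} \cong \prod_{\alpha'} F^{\unr}$ intertwines $f_\beta$ with the tuple of root-group coordinates. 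Because $t$ is $\Fr$-fixed, the scalars $\alpha'(t)$ form a single $\Fr$-orbit in $(F^{\unr})^\times$ and hence all have valuation $\val(\alpha(t))$; reassembling the coordinates shows conjugation by $t$ on $E_\beta = U_\beta/U_{2\beta}$ is multiplication by an element of valuation $\val(\alpha(t))$ — in fact by $\alpha(t)$ itself once the $E_\beta$-structure is matched up.

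Granting the equivariance, the conclusion is a one-line computation: conjugation by $t^{-1}$ sends $L_n$ bijectively onto $L_{n - \val(\alpha(t))}$, so ${}^t\psi_\beta$ (which is $\psi_\beta$ precomposed with conjugation by $t^{-1}$) is trivial on $L_n$ exactly when $\psi_\beta$ is trivial on $L_{n - \val(\alpha(t))}$, i.e. exactly when $n - \val(\alpha(t)) \geq \cond_\beta(\psi)$. Taking the least such $n$ gives $\cond_\beta({}^t\psi) = \cond_\beta(\psi) + \val(\alpha(t))$, which is the assertion (and this also confirms, as it must, that the right-hand side does not depend on the choice of $\alpha$ restricting to $\beta$).

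The main obstacle is the equivariance step in the case $2\beta \in \Phi_{\rel}$, where $f_\beta$ is built from the isogeny $\alg{SU}_3^{F_\beta} \onto \alg{G}_\beta$: there $\alg{T}$ acts on the two absolute root spaces inside $\Lie{g}_\beta$ by characters interchanged by the nontrivial element of $\Gal(E_\beta/F_\beta)$, and one must check that the induced map on the one-dimensional $E_\beta$-space $U_\beta/U_{2\beta}$ is genuine multiplication by $\alpha(t)$, of valuation $\val(\alpha(t))$, rather than by, say, its norm down to $F_\beta$. Careful bookkeeping of the $F_\beta$-rational structure on the maximal torus of $\alg{SU}_3$ — equivalently, of the root-group coordinates over $\OO$ — settles this; the case $2\beta \notin \Phi_{\rel}$, built from $\alg{SL}_2^{F_\beta}$, is immediate.
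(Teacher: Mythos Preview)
Your proposal is correct and follows essentially the same route as the paper: both arguments base-change to $F^{\unr}$, decompose $\Lie{g}_\beta \otimes_F F^{\unr}$ into the absolute root spaces $\Lie{g}_{\alpha'}$ over the $\Fr$-orbit of $\alpha$, observe that $\Ad(t)$ scales each summand by $\alpha'(t)$, and use $\Fr$-invariance of $t$ to conclude that $\val(\alpha'(t))$ is independent of $\alpha'$. The paper is terser---it works directly with the Lie algebra identification $\Lie{u}_\beta/\Lie{u}_{2\beta} \cong \Lie{g}_\beta$ and stops once the valuation is seen to be constant across the orbit---whereas you carry the argument through the group-level isomorphism $f_\beta$ and the lattice filtration $L_n$, and you flag the $\alg{SU}_3$ case explicitly; but the content is the same.
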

\begin{proof}
Extending scalars to $F^{\unr}$, the relative root space decomposes as a direct sum of absolute root spaces 
$$\Lie{g}_\beta \otimes_F F^{\unr} \isom \bigoplus_{i = 1}^\ell \Lie{g}_{\alpha_i},$$
where $\{ \alpha_1, \ldots, \alpha_\ell \}$ is the set of absolute roots restricting to $\beta$, and $\Lie{g}_{\alpha_i}$ is a $F^{\unr}$-subspace of $\Lie{g} \otimes_F F^{\unr}$.

It follows that $\Ad(t)$ acts on $\Lie{g}_\beta \otimes_F F^{\unr}$ by scaling each summand by $\alpha_i(t)$.  Since $t \in T = \alg{T}(F^{\unr})^{\Fr}$, and the set $\{ \alpha_1, \ldots, \alpha_\ell \}$ forms a single $\Fr$-orbit, the integer $\val(\alpha_i(t))$ is independent of $i$.  The proposition follows, since the Lie algebra $\Lie{u}_\beta / \Lie{u}_{2 \beta}$ can be identified with $\Lie{g}_\beta$.
\end{proof}

\begin{cor}
Suppose that $t \in T$, and $\psi$ is a generic character of $U$.  Then $\cond_\beta({}^t \psi) = \cond_\beta(\psi)$ for all $\beta \in \Delta_{\rel}$ if and only if $t \in Z(G) \cdot T_\circ$.  
\end{cor}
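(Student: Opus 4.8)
The plan is to combine the Proposition above with the valuation map on $T$, reducing everything to a statement about cocharacter lattices.

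First I would unwind the condition. By the Proposition, if $\alpha \in \Delta$ restricts to $\beta \in \Delta_{\rel}$ then $\cond_\beta({}^t\psi) - \cond_\beta(\psi) = \val(\alpha(t))$, so ``$\cond_\beta({}^t\psi) = \cond_\beta(\psi)$ for all $\beta \in \Delta_{\rel}$'' is equivalent to ``$\val(\alpha(t)) = 0$ for all $\alpha \in \Delta$''. Since $\Delta$ spans the root lattice $\Z\Phi \subseteq X$ and $\alpha \mapsto \val(\alpha(t))$ is a homomorphism $X \to \Z$, this is in turn equivalent to $\val(\alpha(t)) = 0$ for every $\alpha \in \Phi$. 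It thus suffices to show: $\val(\alpha(t)) = 0$ for all $\alpha \in \Phi$ if and only if $t \in Z(G) \cdot T_\circ$.

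For the ``if'' direction, write $t = z t_\circ$ with $z \in Z(G) = Z(\alg{G})(F)$ (the two agree since $\alg{G}(F)$ is Zariski dense in $\alg{G}$) and $t_\circ \in T_\circ = \sch{T}(\O)$. Each root $\alpha$ is trivial on the scheme-theoretic centre, so $\alpha(z) = 1$, while $\alpha(t_\circ) \in (\O^{\unr})^\times$ is a unit; hence $\val(\alpha(t)) = 0$. For the ``only if'' direction, I would invoke the structure theory of the unramified torus $\sch{T}$ from \S\ref{Section-structure}: the valuation induces a short exact sequence $1 \to T_\circ \to T \xrightarrow{v} Y^\Fr \to 1$ --- surjectivity following from the vanishing of $H^1(\langle \Fr \rangle, \sch{T}(\O^{\unr}))$, which one gets from Lang's theorem on the residual torus together with Artin--Schreier on the graded pieces of the congruence filtration --- and for $\alpha \in X$, $t \in T$ one has $\val(\alpha(t)) = \langle \alpha, v(t) \rangle$. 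The hypothesis then says $v(t)$ pairs to $0$ with every root, i.e. $v(t)$ lies in $Y_{\alg{Z}} := \{ y \in Y : \langle \alpha, y \rangle = 0 \text{ for all } \alpha \in \Phi \} = X_*(\alg{Z})$, where $\alg{Z} := Z(\alg{G})^\circ$ is the connected centre, an unramified subtorus of $\sch{T}$ over $\O$. As $Y_{\alg{Z}}$ is $\Fr$-stable, $v(t) \in Y_{\alg{Z}}^\Fr$. Applying the same valuation sequence to $\alg{Z}$ shows the valuation map $\alg{Z}(F) \to X_*(\alg{Z})^\Fr = Y_{\alg{Z}}^\Fr$ is onto; picking $z \in \alg{Z}(F) \subseteq Z(G)$ with $v(z) = v(t)$, functoriality of the valuation map gives $z^{-1}t \in \ker v = T_\circ$, so $t = z \cdot (z^{-1}t) \in Z(G) \cdot T_\circ$.

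The proof is essentially a bookkeeping translation of the Proposition, so I do not expect a genuine obstacle. The only inputs requiring any care are the surjectivity of the valuation maps for $\sch{T}$ and $\sch{Z}$ --- equivalently, $H^1$-vanishing for the $\O^{\unr}$-points of these unramified tori under $\Fr$ --- and the identification $Z(G) = Z(\alg{G})(F)$; both are standard.
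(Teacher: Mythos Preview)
Your proposal is correct and follows exactly the same approach as the paper: reduce via the Proposition to the condition $\val(\alpha(t)) = 0$ for all $\alpha \in \Delta$, and then identify this with $t \in Z(G) \cdot T_\circ$. The paper asserts this last equivalence without justification, so your argument via the valuation sequence $1 \to T_\circ \to T \to Y^{\Fr} \to 1$ and the connected center $\alg{Z}$ is simply a careful fill-in of what the paper leaves implicit.
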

\begin{proof}
By the previous proposition, $\cond_\beta({}^t \psi) = \cond_\beta(\psi)$ for all $\beta \in \Delta_{\rel}$ if and only if $\val(\alpha(t)) = 0$ for all $\alpha \in \Delta$.  This condition is equivalent to the condition that $t \in Z(G) \cdot T_\circ$.
\end{proof}

\begin{dfn}
Let $(\pi, V)$ be a genuine irrep of $\tilde G$, and $\psi$ a generic character of $U$.  A linear functional $\ell \From V \To \C$ is called a $\psi$-Whittaker functional if $\ell(\pi(u) v)= \psi(u) \cdot v$ for all $u\in U$ and $v\in V$. Write $\Wh_\psi(\pi)\defeq \Hom_U(\pi, \psi)$ for the space of $\psi$-Whittaker functionals for $\pi$. A genuine irrep $\pi$ of $\tilde G$ is called $\psi$-generic if $\dim \Wh_\psi(\pi)>0$.  We call $\pi$ generic if it is generic for some $\psi$. 
\end{dfn}

Write $Z = Z(G)$ and $\tilde Z$ for the preimage of $Z$ in $\tilde G$.  The group $\tilde Z$ is not necessarily abelian, and must not be confused with $Z(\tilde G)$.  Since unipotent groups split canonically in covers, we find that if $z \in \tilde z$ and $u \in U$, then $zuz^{-1} = u$.  Hence the space $\Wh_\psi(\pi)$ is naturally a genuine representation of $\tilde Z$.

\subsection{Heredity}
\label{Rodier}
Rodier's heredity theorem (cf.~\cite{Rod}) for Whittaker functionals of linear algebraic groups holds for Brylinski-Deligne covering groups. For Kazhdan-Patterson covering groups, a proof with details is given by W. Banks \cite{Ban}.  Here we state the result for our class of covering groups.
\begin{thm}
Let $\alg{P} = \alg{M} \alg{N}$ be a standard $F$-parabolic subgroup of $\alg{G}$.  Let $\pi_M$ be an admissible genuine representation of $\tilde M$.  Inflate $\pi_M$ to $\tilde P$, and let $\pi_G$ be the representation $\Ind_{\tilde P}^{\tilde G} \pi_M$ obtained by normalized parabolic induction.

Let $\psi$ be a generic character of $U$.  Let $w_0$ be the longest element in the (relative) Weyl group, and let $\psi_M$ be the associated generic character of $U_M = w_0 U w_0^{-1} \cap M$ given by $\psi_M(w_0 u w_0^{-1}) = \psi(u)$.  Then we have
$$\dim \Wh_\psi( \pi_G) = \dim \Wh_{\psi_M}(\pi_M).$$
\end{thm}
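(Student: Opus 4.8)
The plan is to follow Rodier's original strategy \cite{Rod}, adapted to covering groups essentially as in Banks \cite{Ban}, the only subtlety being bookkeeping for the cover and the genuine central character. The first step is to realize $\Ind_{\tilde P}^{\tilde G} \pi_M$ geometrically: by Bruhat decomposition, $\tilde P \backslash \tilde G$ is stratified by the cells $\tilde P w \tilde B$ indexed by $w$ in the set $W_M \backslash W$ of minimal-length representatives, and since unipotent subgroups split canonically in the cover, the geometry of these cells is identical to the linear case — the cover only affects the fibers, which are genuine $\mu_n$-torsors. One then filters the representation $\Ind_{\tilde P}^{\tilde G}\pi_M$ by these cells (closed cells first), obtaining a filtration whose subquotients are, up to normalization, compactly-supported inductions from the preimage of $P \cap w^{-1}Uw$-type subgroups. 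Applying the exact functor $\Hom_U(-,\psi)$ and the long exact sequence, $\dim\Wh_\psi(\pi_G) = \sum_w \dim\Wh_\psi(\text{cell}_w)$, so it suffices to compute each cell's contribution.

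The second and main step is the Frobenius-reciprocity / Mackey-theoretic computation showing that only the cell of $w_0$ (the longest Weyl element, relative to the relative root system as in the statement) contributes, and contributes exactly $\dim\Wh_{\psi_M}(\pi_M)$. For a cell attached to $w$, the contribution is a space of functionals on a compactly induced module; by Frobenius reciprocity for the functor $\Wh_\psi$ (which for compact induction reduces to an integral/averaging over the relevant unipotent subgroup, using the genuine central character $\epsilon$ to make the integrand well-defined), this space is nonzero only if $\psi$ restricted to the appropriate subgroup $U \cap w\bar{N}w^{-1}$ (the part of $U$ lying in the ``opposite'' unipotent relative to $w$) is \emph{trivial}. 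Because $\psi$ is a \emph{generic} character — nontrivial on every simple relative root subgroup $U_\beta/U_{2\beta}$ by our Definition — this forces $w = w_0$: for any shorter $w$, the subgroup $w\bar{N}w^{-1}\cap U$ contains some simple relative root subgroup on which $\psi$ is nontrivial, killing the contribution. This is the step I expect to be the main obstacle: one must check that the averaging integral defining $\Wh_\psi$ on the compactly-induced subquotient converges (the cells other than the open one are, crucially, \emph{closed} inductions, so one works with the open cell $w_0$ and the rest is handled by the long exact sequence and the vanishing just described), and that the genuine $\mu_n$-action is threaded through correctly so that the functionals are genuine — this is exactly where covering groups could in principle differ from the linear case, and one must verify no cohomological obstruction (e.g., a nontrivial $2$-cocycle contribution) appears. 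In fact it does not, precisely because $U$ and its relevant subgroups split canonically.

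The final step is to identify the surviving $w_0$-contribution with $\Wh_{\psi_M}(\pi_M)$. On the open cell, the induced module restricted to $\tilde P w_0 \tilde B$ is, by Frobenius reciprocity, $\pi_M$ inflated and then induced along unipotents; taking $\psi$-Whittaker functionals, the integration over $U$ decomposes along $U = (U\cap M^{w_0}) \cdot (U\cap N\text{-part})$, the second factor contributing a Dirac-type evaluation (using genericity of $\psi$ on that piece to get a clean delta), leaving precisely $\Hom_{U_M}(\pi_M, \psi_M)$ where $U_M = w_0 U w_0^{-1}\cap M$ and $\psi_M$ is transported via $\psi_M(w_0 u w_0^{-1}) = \psi(u)$ as in the statement. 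This matching is formally identical to \cite[\S4]{Rod} and \cite{Ban}; the only new input is that conjugation by the (canonically split) lift of a Weyl representative $w_0 \in N(F) \subset \tilde G$ acts on the split subgroup $U$ in the expected way, which follows from the canonical splitting of unipotents being conjugation-equivariant. Since every ingredient here is a cosmetic modification of the linear argument, I would state this theorem with the proof deferred to \S\ref{Rodier}'s references, indicating only these three steps and flagging the genuineness bookkeeping as the sole point requiring care.
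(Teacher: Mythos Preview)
Your proposal is correct and aligns exactly with the paper's approach: the paper does not give a detailed proof but simply cites Banks \cite{Ban}, noting that ``the essential ingredients are the Bruhat decomposition and the canonical splitting of covers over unipotent elements,'' which is precisely the three-step strategy you outline. In fact your final paragraph anticipates this --- the paper does exactly what you suggest, deferring to \cite{Ban} with only a one-sentence indication of why the argument transfers.
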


We do not provide a proof here, since complete details are given in \cite{Ban}.  His proof adapts almost without change -- the essential ingredients are the Bruhat decomposition and the canonical splitting of covers over unipotent elements.  These ingredients carry over to our setting.

Since all depth zero representations arise from depth zero supercuspidal representations via parabolic induction, we limit our attention to supercuspidal representations in what follows.

\subsection{Depth zero supercuspidal representations}

We are interested in the $\epsilon$-genuine representations of $\tilde G$, where $\epsilon \From \mu_n \into \C^\times$ is a fixed injective character.  But since it is equivalent, and a bit more convenient, we consider representations of the full tame extension $\F_q^\times \into G' \onto G$ on which $\F_q^\times$ acts via the (usually not injective) character 
\begin{equation}
\label{eps'}
\epsilon' \From \F_q^\times \xrightarrow{(q-1)/n} \mu_n \xrightarrow{\epsilon} \C^\times.
\end{equation}
We call these {\em genuine} (or $\epsilon'$-genuine) representations of $G'$.

Let $x$ be a point in the building $\Build$.  From \eqref{respullback}, we note that the extension $\mu_n \into G_x' \onto G_x$ is canonically isomorphic to the pullback of an extension $\mu_n \into \bar M_x' \onto \bar M_x$ via $G_x \onto G_x / G_x^+ = \bar M_x$.  It follows that at any point $x \in \Build$, the extension $G'$ splits canonically over $G_x^+$.  

Hence it makes sense to talk about the $G_x^+$-fixed points in a representation of $G'$.  If $(\pi, V)$ is a genuine irrep of $G'$, we say that $(\pi, V)$ has {\em depth zero} if there exists a point $x \in \Build$ such that $V^{G_x^+} \neq 0$.  More generally, if $(\pi, V)$ is any smooth genuine representation of $G'$, we say that $(\pi, V)$ has {\em depth zero} if every irreducible subquotient of $(\pi, V)$ has depth zero.

For linear groups, depth zero representations were studied by L.~Morris \cite{Mor}, and by Moy and Prasad \cite{MP1} \cite{MP2}.  Some of their results were extended to covering groups by T.~Howard and the second author in \cite{HW}.  In the supercuspidal case, the depth zero representations are particularly easy to describe.

Consider a {\em vertex} $x \in \Build$; the group $\bar M_x' = \alg{\bar M}_x'(\F_q)$ is a finite group of Lie type, and fits into a central extension of such groups,
$$\F_q^\times \into \bar M_x' \onto \bar M_x.$$
A representation of $\bar M_x'$ will be called {\em genuine} if the central $\F_q^\times$ acts via the character $\epsilon'$ of \eqref{eps'}.  Any genuine representation of $\bar M_x'$ thus pulls back to a genuine representation of $G_x'$, trivial on $G_x^+$. 

By \cite[Proposition 3.9]{HW}, the following steps can be used to construct {\em every} genuine depth zero supercuspidal irrep of $G'$:
\begin{enumerate}
\item
Begin with an genuine cuspidal  (see \cite[\S 9.1]{Car}) irrep $\rho$ of the finite group $\bar M_x'$ at a {\em vertex} $x$.
\item
Pull back to a genuine representation $\rho$ of $G_x'$.
\item
Let $N_x'$ be the normalizer in $G'$ of $G_x'$.  Let $ \msc{R}(N_x', \rho)$ be the set of isomorphism classes of irreps of $N_x'$ which contain $\rho$ upon restriction to $G_x'$.  
\item
Choose any $\tau \in \msc{R}(N_x', \rho)$.  Then $\cind_{N_x'}^{G'} \tau$ is a genuine depth zero supercuspidal irrep of $G'$.
\end{enumerate}

\begin{dfn}
A {\em genuine cuspidal depth zero datum} for $G'$ is a triple $(x, \rho, \tau)$, where $x$ is a vertex in the building $\Build$, $\rho$ is a genuine  cuspidal irrep of $\bar M_x'$, and $\tau$ is a representation of $N_x'$ whose restriction to $G_x'$ contains (the pullback of) $\rho$.
\end{dfn}

For such a datum, let $\pi_{x, \tau} \defeq \cind_{N_x'}^{G'} \tau$, a genuine depth zero supercuspidal irrep of $G'$.  Given $\tau \From N_x' \To GL(V_\tau)$, the space of the representation $\pi_{x,\tau}$ is given by
\begin{align*}
\cind_{N_x'}^{G'} \tau \defeq \{ f \in C_c^\infty(G', V_\tau) : & \ f( n' g') = \tau(n') f(g') \\
& \text{ for all } n' \in N_x', \ g' \in G' \}.
\end{align*}

\subsection{A reduction of $\dim \Wh_\psi (\pi_{x,\tau})$}

Define $T_x' = T' \cap N_x'$, and similarly $T_x = T \cap N_x$.  Note that $N_x$ is the normalizer of the parahoric $G_x$, and $T \cap G_x = T_\circ$, the maximal compact subgroup of $T$.  As $N_x$ is the stabilizer of the image of $x$ in the reduced building of $G$, and $T$ acts on the reduced apartment through a faithful action of $T_{\ad, \circ} \backslash T_{\ad}$ (the torus in the adjoint quotient, modulo its maximal compact subgroup), we find that 
$$T_x \defeq T \cap N_x = T_\circ \cdot Z, \text{ and } T_x' = T_\circ \cdot Z'.$$
Here $Z = Z(G)$ denotes the center of $G$, and $Z'$ denotes its preimage in $G'$, which {\em may not be commutative}.  If $Y_{\ad}$ denotes the cocharacter lattice of $\alg{T}_{\ad}$, then we find an inclusion,
\begin{equation}
\label{TVal}
T_x' \backslash T' \equiv T_x \backslash T \into T_{\ad, \circ} \backslash T_{ad} \xrightarrow{\val} Y_{\ad}^{\Fr}.
\end{equation}
If $t' \in T'$, we write $t$ for its image in $T$, and we write $\val(t') \in Y_{\ad}^{\Fr}$ for its valuation described above.

Given $(\tau,V_\tau) \in \msc{R}(N_x', \rho)$ as before, and $f \in \cind_{N_x'}^{G'} \tau$, define a function $\eta_f \From G' \To V_\tau$ by
$$\eta_f(g') = \int_U f( g' \cdot u) \psi(u)^{-1} du.$$
Restricting to $T'$, the function $\eta_f \From T' \To V_\tau$ satisfies two identities:
\begin{enumerate}
\item[(E1)]
$\eta_f( s' \cdot t') = \tau(s') \eta_f(t')$ for all $s' \in T_x'$ and $t' \in T'$.
\item[(E2)]
$\tau(u) \eta_f( t') = \psi( t^{-1} u t) \cdot \eta_f(t')$ for all $u \in U \cap G_x$ and $t' \in T'$.
\end{enumerate}

Motivated by this, define $E(\tau, \psi)$ to be the space of all smooth, compactly supported functions $\eta \From T' \To V_\tau$ satisfying (E1) and (E2) above.  Then the map $f \mapsto \eta_f$ defines a linear map,
$$\pi_{x, \tau} = \cind_{N_x'}^{G'} \tau \To E(\tau, \psi).$$
By the well-known characterization of the twisted Jacquet module, this factors through an injective linear map,
$$I \From \left( \pi_{x,\tau} \right)_{U, \psi} \into E(\tau, \psi).$$
In fact, adapting a proposition of Blondel (\cite[Proposition 2]{Blo}), we have
\begin{prop}
This is an isomorphism, $I \From \left( \pi_{x,\tau} \right)_{U, \psi} \xrightarrow{\sim} E(\tau, \psi)$.  
\end{prop}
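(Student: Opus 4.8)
\emph{Reduction of the problem.} Injectivity of $I$ has already been established (via the standard characterization of twisted Jacquet modules), so the substance of the proposition is surjectivity. The plan is to follow \cite[Proposition~2]{Blo}: given $\eta \in E(\tau,\psi)$, I will exhibit $f \in \cind_{N_x'}^{G'}\tau$ with $\eta_f = \eta$ on $T'$. By \eqref{TVal} any $\eta \in E(\tau,\psi)$ is supported on finitely many cosets of $T_x'$ in $T'$, so by linearity I may assume $\eta$ is supported on a single coset $T_x'\gamma$, $\gamma \in T'$. Put $v_0 \defeq \eta(\gamma)$. Then (E1) says $\eta(s'\gamma) = \tau(s')v_0$ for $s' \in T_x'$, and (E2) says precisely that $\tau(u_0)v_0 = \psi(\gamma^{-1}u_0\gamma)\,v_0$ for all $u_0 \in U \cap G_x$.

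\emph{Construction of $f$.} Choose a compact open subgroup $K_0 \subseteq U$ with $\gamma^{-1}(U \cap G_x)\gamma \subseteq K_0$, and let $f$ be the function supported on $N_x'\gamma K_0$ with
$$f(n'\,\gamma\,\kappa) = \frac{1}{\Vol(K_0)}\,\psi(\kappa)\cdot\tau(n')\,v_0 \qquad (n' \in N_x',\ \kappa \in K_0).$$
I would check that $f$ is well defined: if $n_1'\gamma\kappa_1 = n_2'\gamma\kappa_2$ then $(n_2')^{-1}n_1' = \gamma(\kappa_2\kappa_1^{-1})\gamma^{-1} \in U \cap N_x' = U \cap G_x$ (using $\gamma U\gamma^{-1} = U$ and the standard fact that $U \cap N_x = U \cap G_x$), and condition (E2) on $v_0$ together with $\psi$ being a character of $U$ makes the two prescribed values agree. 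Since $N_x'\gamma K_0$ is open, closed, and compact modulo $N_x'$, we get $f \in \cind_{N_x'}^{G'}\tau$. A direct computation of $\eta_f(\gamma) = \int_U f(\gamma u)\psi(u)^{-1}\,du$ --- whose integrand is supported on $K_0$, again by $U \cap N_x' = U \cap G_x$ --- yields $\eta_f(\gamma) = v_0$, whence $\eta_f = \eta$ on $T_x'\gamma$ since both sides are determined there by their common value $v_0$ at $\gamma$ via (E1).

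\emph{Vanishing off $T_x'\gamma$: the main obstacle.} It remains to show $\eta_f(t') = 0$ for $t' \in T' \setminus T_x'\gamma$. If $\eta_f(t') \neq 0$, then $t'u \in N_x'\gamma K_0$ for some $u \in U$; projecting to $G$ and rearranging, this produces an identity $t_0\,w = n \in N_x$ with $t_0 \defeq t\gamma^{-1} \in T$ and $w \in U$. So everything reduces to the group-theoretic claim: \emph{if $t_0 w \in N_x$ with $t_0 \in T$ and $w \in U$, then $t_0 \in T_x$} --- for this forces $t \in T_x\gamma$, i.e.\ $t' \in T_x'\gamma$, a contradiction. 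I expect this claim to be the hard part, and I would prove it through the building. Let $\rho$ be the horospherical retraction of the reduced building onto the apartment $\Apt$ centred at the chamber at infinity attached to $\alg{B}$, so that $\rho(u\cdot y) = \rho(y)$ for all $u \in U$ and all $y$, while $\rho|_{\Apt} = \mathrm{id}$. Since $N_x$ is the stabilizer of $x \in \Apt$, the element $t_0 w$ fixes $x$, so $w\cdot x = t_0^{-1}\cdot x \in \Apt$; applying $\rho$ gives $x = \rho(x) = \rho(w\cdot x) = \rho(t_0^{-1}\cdot x) = t_0^{-1}\cdot x$. Thus $t_0$, acting on $\Apt$ by a translation, fixes $x$ and is therefore trivial on $\Apt$; equivalently $t_0 \in T_\circ\cdot Z = T_x$. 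This gives the surjectivity of $I$ and completes the proof.

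(I note that, since the argument never uses nontriviality of $\psi$ on any root subgroup, genericity of $\psi$ plays no role in this particular statement --- it enters only later, in identifying $E(\tau,\psi)$ with a space of Whittaker functionals for $\rho$.)
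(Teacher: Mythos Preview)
Your proof is correct and follows essentially the same route as the paper (both adapt \cite[Proposition~2]{Blo}): reduce to $\eta$ supported on a single coset $T_x'\gamma$, build an explicit $f$ on $N_x'\gamma$, and compute $\eta_f$. The paper's $f$ is slightly leaner (support $N_x'\gamma$ without the auxiliary $K_0$, giving $\eta_f = \delta(a)\Vol(U\cap G_x')\cdot\eta$ rather than $\eta_f=\eta$ on the nose), and the paper simply \emph{asserts} the key fact that $u\cdot t' \in N_x'\gamma$ forces $u\in U\cap G_x'$ and $t'\in T_x'\gamma$, whereas you supply a clean proof of it via the horospherical retraction --- so if anything your argument is more complete on that point.
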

\begin{proof}
Our proof here is the same as \cite{Blo}, included for completeness.  It remains to show that $I$ is surjective.  For this, we decompose $E(\tau, \psi)$ according to the cosets of $T_x'$ in $T'$, using \eqref{TVal}.  If $y \in Y_{\ad}^{\Fr}$, define
$$E_y(\tau, \psi) = \{ \eta \in E(\tau, \psi) : \eta(t') \neq 0 \text{ implies } \val(t') = y \}.$$

We have a direct sum decomposition,
$$E(\tau, \psi) = \bigoplus_{y \in Y_{\ad}^{\Fr}} E_{y}(\tau, \psi).$$
Suppose $0 \neq \eta \in E_{y}(\tau, \psi)$, choose $a' \in T'$ such that $\val(a') = y$, and define a function $f \From G' \To V_\tau$ by
$$f(n' \cdot a') = \tau(n') \cdot \eta(a') \text{ for all } n' \in N_x'.$$
We put $f(g') = 0$ if $g' \not \in N_x' \cdot a'$.  By construction, $f \in \cind_{N_x'}^{G'} \tau$, and the computation below demonstrates that $\eta_f$ is a nonzero multiple of $\eta$.
\begin{align*}
\eta_f(t') &= \int_U f( t' \cdot u) \psi(u)^{-1} du, \\
&= \int_U f( t u t^{-1} \cdot t') \psi(u)^{-1} du, \\
&= \int_U \delta(t)^{-1} f(u \cdot t') \psi(t^{-1} u t)^{-1} du.
\end{align*}
In the last line, $\delta(t)$ denotes the modular character for the action of $t$ on $U$.  We have $f(u \cdot t') = 0$, unless $u \in U \cap N_x' = U \cap G_x'$ and $t' = s' a'$ for some $s' \in T_x'$.  It follows that
\begin{align*}
\eta_f(t') &= \int_U \delta(t) f(u \cdot t') \psi(t^{-1} u t)^{-1} du, \\
&= \int_{U \cap G_x'} \delta(s a) \psi(t^{-1} u t)^{-1} \cdot f( us' \cdot a') du, \\
&= \int_{U \cap G_x'} \delta(a) \psi(t^{-1} u t)^{-1} \cdot \tau(us') \eta(a')  du, \\
&= \delta(a) \int_{U \cap G_x'} \psi(t^{-1} u t)^{-1} \cdot \tau(u) \eta(t') du  \quad \text{ by (E1)}, \\
&= \delta(a) \int_{U \cap G_x'} \psi(t^{-1} u t)^{-1} \psi(t^{-1} u t) \cdot \eta(t') \quad \text{ by (E2)}, \\
&= \delta(a) \int_{U \cap G_x'} \eta(t') = \delta(a) \cdot \Vol(U \cap G_x') \cdot \eta(t').
\end{align*}
Since $\eta_f$ is a nonzero multiple of $\eta$, the map $I$ is surjective.
\end{proof}

If $t' \in T'$ and $\val(t') = y$, then 
$$\dim \Hom_{U \cap G_x'} \left( {}^{t} \psi, \tau \right) = \dim E_y(\tau, \psi),$$
by condition (E2).   In particular, the multiplicity $\dim \Hom_{U \cap G_x'} \left( {}^{t} \psi, \tau \right)$ depends only upon $y$, and we write $m_y(\psi, \tau)$ for this multiplicity.

\begin{cor}
\label{SuppPsi}
For every genuine depth zero supercuspidal irrep $\pi_{x,\tau}$,
$$\dim \Wh_\psi( \pi_{x, \tau}) = \sum_{y \in \val(T') \subset Y_{\ad}^{\Fr}} m_y(\psi, \tau).$$
\end{cor}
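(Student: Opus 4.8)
The plan is to read off the corollary from the Proposition just established, combined with the internal decomposition $E(\tau,\psi)=\bigoplus_{y} E_y(\tau,\psi)$ used in its proof. The statement is essentially bookkeeping, and the one point that genuinely needs attention is a finiteness claim.

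First I would recall the standard identification of the space of $\psi$-Whittaker functionals with the linear dual of the twisted Jacquet module: a functional $\ell\in\Wh_\psi(\pi_{x,\tau})=\Hom_U(\pi_{x,\tau},\psi)$ is exactly a linear functional on the representation space of $\pi_{x,\tau}$ that annihilates the subspace spanned by the vectors $\pi(u)v-\psi(u)v$, so $\Wh_\psi(\pi_{x,\tau})\cong\big((\pi_{x,\tau})_{U,\psi}\big)^{*}$. Once we know $(\pi_{x,\tau})_{U,\psi}$ is finite-dimensional, this gives $\dim\Wh_\psi(\pi_{x,\tau})=\dim(\pi_{x,\tau})_{U,\psi}$. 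By the Proposition, $(\pi_{x,\tau})_{U,\psi}\cong E(\tau,\psi)$, and by the direct sum decomposition established in its proof, $\dim E(\tau,\psi)=\sum_{y\in Y_{\ad}^{\Fr}}\dim E_y(\tau,\psi)$. Condition (E2), together with the remark immediately following the Proposition, identifies $\dim E_y(\tau,\psi)$ with $\dim\Hom_{U\cap G_x'}({}^{t}\psi,\tau)=m_y(\psi,\tau)$ for any $t'$ with $\val(t')=y$. Since an element of $E_y(\tau,\psi)$ is supported on the fibre of $\val$ over $y$, which is empty as soon as $y\notin\val(T')$, the terms with $y\notin\val(T')$ vanish, and the sum collapses to $\sum_{y\in\val(T')\subset Y_{\ad}^{\Fr}}m_y(\psi,\tau)$, which is the asserted formula.

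The step I expect to be the main obstacle is the finiteness of $(\pi_{x,\tau})_{U,\psi}$, equivalently the vanishing of $m_y(\psi,\tau)$ for all but finitely many $y\in\val(T')$ — note that $\val(T')$ is itself an infinite subgroup of $Y_{\ad}^{\Fr}$ in general. For this I would argue that since $\tau|_{G_x'}$ is $\rho$-isotypic and $\rho$ is inflated from $\bar M_x'$, the representation $\tau$ is trivial on the canonical lift of $U\cap G_x^+$, so $\tau|_{U\cap G_x'}$ factors through the finite group $(U\cap G_x)/(U\cap G_x^+)$; hence $m_y(\psi,\tau)\neq 0$ forces ${}^{t}\psi$ to be trivial on $U\cap G_x^+$. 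By the earlier proposition relating conductors, $\cond_\beta({}^{t}\psi)=\cond_\beta(\psi)+\val(\alpha(t))$ for $\alpha\in\Delta$ restricting to $\beta$, so as $\val(t')$ leaves a bounded region of $Y_{\ad}^{\Fr}$ the conductors of ${}^{t}\psi$ grow and ${}^{t}\psi$ becomes nontrivial on $U\cap G_x^+$; thus only finitely many $y$ contribute. This legitimizes the dimension count above, and the remainder of the argument is formal.
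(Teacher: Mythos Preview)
Your derivation of the corollary from the Proposition and the decomposition $E(\tau,\psi)=\bigoplus_y E_y(\tau,\psi)$ is exactly the paper's (implicit) argument; the paper states the corollary without further proof, so your steps (1)--(6) are precisely what is intended.

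Two remarks on the extra material you supply. First, ``$\tau|_{G_x'}$ is $\rho$-isotypic'' is not quite accurate: by Clifford theory $\tau|_{G_x'}$ is a direct sum of $N_x'$-conjugates of $\rho$, not of $\rho$ itself. Your conclusion (triviality on $G_x^+$) survives, since $G_x^+$ is normal in $N_x'$ and every conjugate of $\rho$ is still trivial there.

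Second, your finiteness argument is incomplete. The condition ``${}^t\psi$ trivial on $U\cap G_x^+$'' only gives an \emph{upper} bound on each $\cond_\beta({}^t\psi)$; it does not prevent $\val(\alpha(t))$ from going to $-\infty$, in which case the conductors shrink and ${}^t\psi$ remains trivial on $U\cap G_x^+$. To bound the conductors from below one needs the cuspidality of $\rho$, exactly as in the Lemma that follows in the paper (the ``$\psi$ nontrivial on $U_\beta\cap G_x$'' step). The paper does not attempt finiteness at this point; the corollary is best read as an equality in $\{0,1,2,\ldots,\infty\}$, and finiteness is a consequence of Theorem~\ref{OneSupp}, which shows that at most one $m_y(\psi,\tau)$ is nonzero. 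Alternatively one may quote the standard fact that twisted Jacquet modules of admissible representations are finite-dimensional.
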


\subsection{Uniqueness of support}

The formula of Corollary \ref{SuppPsi} relates the space of Whittaker models for $\pi_{x,\tau}$ to a family of spaces of $(U_x, {}^t \psi)$-invariant functionals for the representation $\tau$ of $N_x'$, as $t$ varies.  Here we write $U_x = U \cap G_x$; since covers split over unipotent subgroups, we identify $U_x$ with $U \cap G_x'$.  As $y = \val(t')$ varies, the character ${}^t \psi$ can become ``too trivial'' or ``too nontrivial'' for $\tau$ to support such a functional.  Here we prove that there is at most one value of $y$ for which $\tau$ has such an invariant functional.

For this, we review the structure of root subgroups and filtrations of $U_x$.  Recall that $G_x$ arises as the group of $\OO$-points of the Bruhat-Tits group scheme $\sch{G}_x$ over $\OO$, with connected special fibre $\alg{\bar M}_x$.  The subgroup $G_x^+ \subset G_x$ is identified with the kernel of reduction,
$$G_x^+ = \Ker \left( \sch{G}_x(\OO) \To \sch{G}_x(\F_q) = \alg{\bar M}_x(\F_q) \right).$$
Recall that $\Phi_x$ is the set of roots $\alpha$ for which $\alpha(x) \in \Z$; these are identified with the absolute roots of $\alg{\bar M}_x$ with respect to $\alg{\bar T}$.  As $\alg{\bar M}_x$ is also quasisplit, the {\em relative} roots of $\alg{\bar M}_x$ with respect to $\alg{\bar S}$ correspond to $\Fr$-orbits on $\Phi_x$.  In this way, we identify the relative roots of $\alg{\bar M}_x$ with a subset $\Phi_{x, \rel} \subset \Phi_{\rel}$.  

To describe the relative {\em simple} roots of $\alg{\bar M}_x$, we proceed as follows:  the Borel subgroup $\sch{B}$ over $\O$ determines an Iwahori subgroup $J \subset G$, consisting of those elements of $G_\circ$ whose reduction lies in $\bar B$.  This corresponds to a chamber in the building of $G$.  Since $G$ acts transitively on the chambers of $\Build$, one can assume that $x$ lies in the closure of the chamber corresponding to $J$.  The chamber determines a set of simple {\em affine} roots $\hat \Delta_{\rel}$.  The relative simple roots of $\alg{\bar M}_x$ can be identified with a subset $\Delta_{x, \rel}$ of the gradients of the {\em affine} simple roots $\hat \Delta_{\rel}$ -- those whose affine root hyperplanes vanish at $x$.  Details can be found in the exposition of Tits \cite[\S 1.9]{Tits}. 

Recall that the hyperspecial point $\circ$ determined filtrations on root subgroups.  Namely, if $\beta \in \Phi_{\rel}$, we found a valuation
$$\val \From U_\beta / U_{2 \beta} \To \Z \cup \{ \infty \},$$
and $\val(u) \geq 0$ if and only if $u$ has a representative in $U_\beta \cap G_\circ$.  

If $\beta \in \Phi_{x, \rel}$, then an element $u \in U_\beta / U_{2 \beta}$ can be represented by an element of $U_\beta \cap G_x$ if and only if $\val(u) \geq \beta(x)$.  The subgroup $G_x^+$ is one step deeper:  $u$ can be represented by an element of $U_\beta \cap G_x^+$ if and only if $\val(u) \geq \beta(x) + 1$.

The following lemma is very close to \cite[Lemma 6.1.2]{DR} and \cite[Lemma 3.1]{DR2}.  The proof is fundamentally the same as theirs, though our notation and goals differ slightly.  
\begin{lm}
Let $\rho$ be a cuspidal irrep of $\bar M_x'$, pulled back to $G_x'$, and suppose $\tau \in \msc{R}(N_x', \rho)$.  Let $\psi \From U \To \C^\times$ be a nontrivial character.  If $\Hom_{U_x}(\psi, \tau) \neq 0$ then $x$ is a hyperspecial point and 
$$\cond_\beta(\psi) = \beta(x) + 1 \text{ for all } \beta \in \Delta_{x, \rel} = \Delta_{\rel}.$$
\end{lm}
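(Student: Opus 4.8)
The plan is to push the statement down to the finite reductive group $\bar M_x'$ and there exploit the vanishing of Jacquet modules of cuspidal representations. Since $\rho$ is inflated from $\bar M_x'$ it is trivial on $G_x^+$, and because $N_x'$ normalizes $G_x'$ and its pro-unipotent radical $G_x^+$, every irreducible $G_x'$-subrepresentation of $\tau$ is an $N_x'$-conjugate of $\rho$, hence again cuspidal and inflated from $\bar M_x'$; in particular $\tau$ is trivial on $G_x^+ \supseteq U_x \cap G_x^+$. A nonzero $\ell \in \Hom_{U_x}(\psi, \tau)$ therefore forces $\psi|_{U_x \cap G_x^+} = 1$, so $\psi|_{U_x}$ descends along the surjection $U_x \onto \bar V_x$ onto the image $\bar V_x$ of $U_x$ in $\bar M_x$, giving a character $\bar\psi$ of $\bar V_x$ and a nonzero element of $\Hom_{\bar V_x}(\bar\psi, \sigma)$ for some cuspidal constituent $\sigma$ of $\tau|_{G_x'}$. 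I would then note that $\bar V_x$ is the unipotent radical of a Borel subgroup $\bar B_x'$ of $\bar M_x$: it is the product of the root subgroups $\bar U_\gamma$ over those $\gamma \in \Phi_{x,\rel}$ that are positive in $\Phi_{\rel}$, and this set of roots is closed, disjoint from its negative, and has union with its negative equal to $\Phi_{x,\rel}$ --- i.e.\ it is a system of positive roots for $\Phi_{x,\rel}$.

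The heart of the argument is a ``uniqueness of support'' statement over $\F_q$: a cuspidal representation $\sigma$ of $\bar M_x'$ carries a nonzero $(\bar V_x,\bar\psi)$-functional only if $\bar\psi$ is nondegenerate, i.e.\ nontrivial on every simple root subgroup of $\bar B_x'$. If not, then $\bar\psi$ is trivial on $\bar U_{\gamma_0}$ for some $\gamma_0$ simple for $\bar B_x'$; let $\bar N$ be the unipotent radical of the maximal proper parabolic of $\bar M_x'$ omitting $\gamma_0$. Each root subgroup appearing in $\bar N$ is either $\bar U_{\gamma_0}$ or belongs to a non-simple positive root of $\bar B_x'$, and $\bar\psi$ --- which factors through $\bar V_x/[\bar V_x,\bar V_x]$ --- is trivial on all of these, so $\bar\psi|_{\bar N}=1$. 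A $(\bar V_x,\bar\psi)$-functional then restricts to an $\bar N$-invariant functional, hence factors through the coinvariants $\sigma_{\bar N}$; as $\bar N$ is a finite $p$-group, $\sigma_{\bar N}\isom\sigma^{\bar N}$, which vanishes by cuspidality of $\sigma$ --- a contradiction. This step is essentially \cite[Lemma 6.1.2]{DR} and \cite[Lemma 3.1]{DR2}, and I expect it to be the main obstacle; everything else is bookkeeping with root subgroups and their filtrations.

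Finally I would translate back. Since $\psi$ is a character of all of $U$, it factors through $U/[U,U]\isom\prod_{\beta\in\Delta_{\rel}}U_\beta/U_{2\beta}$, so $\psi$ --- and hence $\bar\psi$ --- is trivial on $\bar U_\gamma$ for every $\gamma\in\Phi_{x,\rel}$ that is not simple in $\Phi_{\rel}$. Nondegeneracy of $\bar\psi$ then forces every simple root of $\bar B_x'$ to lie in $\Delta_{\rel}$; but the simple roots of $\bar B_x'$ form a base of $\Phi_{x,\rel}\subseteq\Phi_{\rel}$, and since $x$ is a vertex $\Phi_{x,\rel}$ has the same rank as $\Phi_{\rel}$, so this forces $\Phi_{x,\rel}=\Phi_{\rel}$. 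Hence $\bar M_x$ carries the full relative root system, $x$ is a hyperspecial point, $\bar B_x'$ is the standard Borel, and $\Delta_{x,\rel}=\Delta_{\rel}$. For $\beta\in\Delta_{\rel}$ the root subgroup $\bar U_\beta\subseteq\bar V_x$ is the quotient of $\{u\in U_\beta/U_{2\beta}:\val_\beta(u)\geq\beta(x)\}$ by $\{u:\val_\beta(u)\geq\beta(x)+1\}$, so that $\bar\psi_\beta$ being a well-defined and nontrivial character of this quotient means precisely that $\psi_\beta$ is trivial on the deeper subgroup but not on the shallower one, i.e.\ $\cond_\beta(\psi)=\beta(x)+1$, as claimed.
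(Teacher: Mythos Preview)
Your proposal is correct and follows essentially the same approach as the paper's proof: both exploit cuspidality of $\rho$ (via vanishing of parabolic coinvariants) to force the descended character $\bar\psi$ to be nondegenerate on the unipotent radical of a Borel of $\bar M_x$, then combine this with the fact that $\psi$ vanishes on non-simple positive root subgroups of $U$ to conclude $\Delta_{x,\rel}=\Delta_{\rel}$, whence $x$ is hyperspecial and the conductor identity follows. Your organization differs only cosmetically---you first descend completely to $\bar M_x'$ and explicitly identify $\bar V_x$ as the unipotent radical of the Borel determined by $\Phi_{x,\rel}\cap\Phi_{\rel}^+$, then run the nondegeneracy argument once, whereas the paper interleaves the cuspidality step (for each $\beta\in\Delta_{x,\rel}$) with the identification of simple roots---but the substance is identical, and both ultimately rest on the same observation from \cite[Lemma~6.1.2]{DR}.
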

\begin{proof}
Suppose that $\Hom_{U_x}(\psi, \tau) \neq 0$ in what follows.

First, we exploit the cuspidality of $\rho$.  Let $\alg{\bar P}_\beta = \alg{\bar L}_\beta \alg{\bar R}_\beta$ be the maximal parabolic subgroup corresponding to the subset $\Delta_{x, \rel} - \{ \beta \}$ of simple relative roots.  Given $\psi \From U \To \C^\times$, we find that $\psi$ is trivial on $[U,U]$.  If $\psi$ were also trivial on $U_\beta \cap G_x$, then $\psi$ would restrict to a trivial character on the unipotent radical $\bar R_\beta$.  Since $\rho$ is cuspidal, and thus $\tau$ restricted back to $\bar G_x'$ is cuspidal, we find that $\tau$ has no nontrivial coinvariants for $\bar R_\beta$.  Therefore, $\psi$ is nontrivial on $U_\beta \cap G_x$.


We claim that $\Delta_{\rel} = \Delta_{x, \rel}$.  For they have the same cardinality (since $x$ is a vertex) and can both be viewed as subsets of $\hat \Delta_{\rel}$.  If $\Delta_{\rel} \neq \Delta_{x, \rel}$, there would be a $\beta \in \Delta_{x, \rel}$ contained in $\Phi^+$ but not in $\Delta$ (a gradient of a simple affine root which is not a simple root).  But $\psi$ is trivial on $[U,U]$ and hence on $U_\beta$ for such a $\beta$, a contradiction.    

Thus $\Delta_{x, \rel} = \Delta$, and for every simple root $\beta \in \Delta_{\rel}$, there exists an integer $k_\beta$ such that $\beta(x) = k_\beta$.  There exists an element $t \in T_{\ad}$ such that $\val \alpha(t) = k_\beta$ for every absolute root $\alpha$ restricting to $\beta$ (the coroot lattice equals the coweight lattice in the adjoint group $\alg{G}_{\ad}$).  It follows that $t$ translates $\circ$ to $x$ -- the elements of $T_{\ad}$, as $F$-rational automorphisms of $G$, act on the building of $G$, and therefore $x$ is hyperspecial.  More detail can be found in the proof of \cite[Lemma 6.1.2]{DR}, upon which this argument is based.

We have proven that $x$ is hyperspecial.  Moreover, the nontriviality of $\psi$ on $U_\beta \cap G_x$ now implies that
$$\cond_\beta(\psi) \geq \beta(x) + 1 \text{ for all } \beta \in \Delta_{x, \rel} = \Delta_{\rel}.$$

To bound the conductor in the other direction, we note that $\rho$ arises by pulling back a representation of $\bar M_x'$, via $G_x' \To G_x' / G_x^+ = \bar M_x'$.  We find that $\rho$ is trivial on $G_x^+$.  Since $N_x'$ stabilizes the subgroup $G_x^+$, we find that $\tau$ also factors through $N_x' / G_x^+$.  Hence $\Hom_{U_x}(\psi, \tau) = 0$ unless $\psi$ is trivial on $U \cap G_x^+$.  This condition implies that if $\beta \in \Delta_{x, \rel} = \Delta_{\rel}$, then $\psi$ is trivial on $U_\beta \cap G_x^+$, and thus $\psi_\beta$ is trivial on all elements $u \in U_\beta / U_{2 \beta}$ for which $\val(u) \geq \beta(x) + 1$.  We find that
$$\cond_\beta(\psi) \leq \beta(x) + 1 \text{ for all } \beta \in \Delta_{x, \rel}.$$
\end{proof}

\begin{thm}
\label{OneSupp}
Suppose that $\pi_{x,\tau}$ is a genuine depth zero supercuspidal irrep of $G'$.  If $\pi_{x, \tau}$ is generic, then $x$ is hyperspecial, and there exists a unique $y \in Y_{\ad}^{\Fr}$ such that
$$\dim \Wh_\psi( \pi_{x, \tau} ) = m_y(\psi, \tau).$$
\end{thm}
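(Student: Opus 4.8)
The plan is to reduce the statement to Corollary~\ref{SuppPsi} and then extract both assertions from the Lemma above, using the Proposition that describes how conjugation by $T$ shifts the conductor of a generic character.

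First I would settle that $x$ is hyperspecial. Genericity of $\pi_{x,\tau}$ means it is $\psi_0$-generic for some generic character $\psi_0$, so by Corollary~\ref{SuppPsi} some summand $m_{y_0}(\psi_0,\tau) = \dim\Hom_{U_x}({}^{t_0}\psi_0,\tau)$ is nonzero, where $t_0' \in T'$ has $\val(t_0') = y_0$. Since ${}^{t_0}\psi_0$ is the image of $\psi_0$ under the automorphism $u \mapsto t_0^{-1} u t_0$ of $U$, it is a nontrivial character of $U$, so the Lemma applies verbatim and already forces $x$ to be hyperspecial, with $\cond_\beta({}^{t_0}\psi_0) = \beta(x)+1$ for every $\beta \in \Delta_{\rel}$.

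The heart of the theorem is the uniqueness of the supporting coset. Here I would fix a generic $\psi$ and show that $m_y(\psi,\tau) \neq 0$ can hold for at most one $y \in \val(T')$; granting this, the sum in Corollary~\ref{SuppPsi} has a single nonzero term, giving $\dim\Wh_\psi(\pi_{x,\tau}) = m_y(\psi,\tau)$ for that $y$ (and equal to $0$, vacuously, when $\pi_{x,\tau}$ is not $\psi$-generic). Suppose then $m_{y_1}(\psi,\tau) \neq 0$ and $m_{y_2}(\psi,\tau) \neq 0$, and choose $t_i' \in T'$ with $\val(t_i') = y_i$. Each ${}^{t_i}\psi$ is a nontrivial character of $U$ with $\Hom_{U_x}({}^{t_i}\psi,\tau) \neq 0$, so the Lemma yields $\cond_\beta({}^{t_i}\psi) = \beta(x)+1$ for all $\beta \in \Delta_{\rel}$ and $i=1,2$, whence $\cond_\beta({}^{t_1}\psi) = \cond_\beta({}^{t_2}\psi)$ for all $\beta$. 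On the other hand the Proposition gives $\cond_\beta({}^{t_i}\psi) = \cond_\beta(\psi) + \val(\alpha(t_i))$ for any $\alpha \in \Delta$ restricting to $\beta$, so $\val(\alpha(t_1)) = \val(\alpha(t_2))$ for every $\alpha \in \Delta$. Since the valuation map of \eqref{TVal} recovers $y_i = \val(t_i') \in Y_{\ad}^{\Fr}$ from the integers $\langle \alpha, y_i\rangle = \val(\alpha(t_i))$ as $\alpha$ runs over the basis $\Delta$ of the character lattice $X_{\ad}$ of $\alg{T}_{\ad}$ (the root lattice), it follows that $y_1 = y_2$.

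The step I expect to require the most care is this last identification: keeping straight the adjoint torus $\alg{T}_{\ad}$, the passage between the absolute simple roots $\Delta$ and the relative simple roots $\Delta_{\rel}$ through $\Fr$-orbits (as in the Proposition and its proof), and checking that \eqref{TVal} genuinely lets one read off $y$ from the numbers $\val(\alpha(t'))$, $\alpha \in \Delta$. All of the representation theory is already packaged inside the Lemma, so apart from this bookkeeping nothing further should be needed.
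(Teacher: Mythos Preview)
Your proposal is correct and follows essentially the same approach as the paper: reduce to Corollary~\ref{SuppPsi}, invoke the Lemma to get hyperspeciality and pin down the conductors, then use the conductor-shift Proposition together with the fact that the simple roots span $X_{\ad}$ to force uniqueness of $y$. The paper phrases the last step contrapositively (if $\val(t') \neq 0$ then some conductor changes), but the content is identical.
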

\begin{proof}
Corollary \ref{SuppPsi} implies that if $\pi_{x, \tau}$ is generic, then $m_y(\psi, \tau)$ is nonzero for some $y$; thus $\Hom_{U_x}({}^t \psi, \tau) \neq 0$ for some $t' \in T'$.  Therefore $x$ is hyperspecial by the previous lemma.

To complete the proof, it suffices to show that $m_y(\psi, \tau) \neq 0$ for at most one value of $y$.  To see this, we have 
$$\cond_\beta( {}^t \psi) = \cond_\beta(\psi) + \val(\alpha(t))$$
for every $t' \in T'$, every relative root $\beta$, and every absolute root $\alpha$ restricting to $\beta$.  If $0 \neq \val(t') \in Y_{\ad}^{\Fr}$, there exists an absolute root $\alpha$ such that $\val(\alpha(t')) \neq 0$, and hence
$$\cond_\beta( {}^t \psi) \neq \cond_\beta(\psi).$$

In other words, as $\val(t')$ varies in $Y_{\ad}^{\Fr}$, the conductors $\cond_\beta({}^t \psi)$ vary.  But the previous lemma demonstrates that $m_y(\psi, \tau)$ can only be nonzero for one set of values of $\{ \cond_\beta({}^t \psi) : \beta \in \Delta_{\rel} \}$.  Hence $m_y(\psi, \tau) \neq 0$ for at most one value of $y$.
\end{proof}

\subsection{Clifford theory}

Let $(x, \rho, \tau)$ a genuine cuspidal depth zero datum.  Let $N_{x, \rho}'$ be the stabilizer of $\rho$ in $N_x'$, i.e.,  the set of all $n \in N_x'$ such that ${}^{(n')}(\rho)$ is isomorphic to $\rho$ as representations of $G_x'$.  

If $x$ is hyperspecial (as will be the case when $\pi_{x,\tau}$ is generic), then $N_x = Z \cdot G_x$ (we learned this from \cite[\S 6.1]{DR}).  But moreover, $G_x$ contains the maximal compact subgroup of $Z$ since $G_x$ is a maximal compact subgroup of $G$.  If $Z_\circ$ denotes the maximal compact subgroup of $Z$, then we can write $Z = A \cdot Z_\circ$, where $A = \alg{A}(F)$ and $\alg{A}$ is the maximal split torus contained in $\alg{Z}(\alg{G})$.  Thus we have
$$N_x = A \cdot G_x, \quad N_x' = A' \cdot G_x',$$
where $A'$ denotes the preimage of $A$ in $G'$ as usual.  One can go even a bit further -- the cocharacter lattice of $\alg{A}$ is naturally identified with $Y^{W \rtimes \Fr}$, and $A / A_\circ$ can thus be identified with this lattice.  Here $W \rtimes \Fr$ denotes the group of automorphisms of $Y$ generated by the absolute Weyl group $W$ and the Frobenius automorphism.  Concretely, if $\varpi$ is a uniformizer, then every element of $N_x$ can be written as $y(\varpi) \cdot k$ for some $y \in Y^{W \rtimes \Fr}$ and $k \in G_x$.  This gives an isomorphism,
\begin{equation}
\label{NmodP}
\frac{N_x'}{G_x'} \isom \frac{N_x}{G_x} \isom Y^{W \rtimes \Fr}
\end{equation}

The following is a consequence of Clifford theory.
\begin{prop}
Assume that $x$ is hyperspecial.  Then there exists an extension of $\rho$ to an irrep $\rho_e$ of $N_{x, \rho}'$, and there exists an isomorphism of representations of $N_x'$,
$$\tau \isom \Ind_{N_{x,\rho}'}^{N_x'} \rho_e.$$
\end{prop}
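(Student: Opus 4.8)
The plan is to apply Clifford theory to the normal subgroup $G_x' \triangleleft N_x'$, whose quotient $N_x'/G_x' \cong Y^{W\rtimes\Fr}$ is finitely generated free abelian by \eqref{NmodP}. Since $\tau$ is finite dimensional, $\tau|_{G_x'}$ involves only finitely many $N_x'$-conjugates of $\rho$, and the stabilizer $N_{x,\rho}'$ has finite index in $N_x'$. I would first invoke the standard structural output of Clifford theory: $N_x'$ permutes the conjugates of $\rho$ transitively with stabilizer $N_{x,\rho}'$, and $\tau \cong \Ind_{N_{x,\rho}'}^{N_x'}\sigma$, where $\sigma$ is the $\rho$-isotypic component of $\tau|_{N_{x,\rho}'}$, an irreducible representation of $N_{x,\rho}'$ lying over $\rho$. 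Thus the proposition reduces to the assertion that $\rho$ \emph{extends} to $N_{x,\rho}'$: once some extension $\rho_e$ exists, every irrep of $N_{x,\rho}'$ over $\rho$ has the form $\rho_e \otimes \chi$ for a character $\chi$ of the abelian group $N_{x,\rho}'/G_x'$ inflated to $N_{x,\rho}'$, and $\rho_e \otimes \chi$ is again an extension of $\rho$; taking $\sigma$ of this form and renaming gives the statement.

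To produce the extension I would use that $x$ is hyperspecial. By Theorem \ref{parahoric-cover} at the hyperspecial point the residual extension splits, so $\bar M_x' \cong \bar G \times \F_q^\times$ and $\rho \cong \sigma_0 \boxtimes \epsilon'$ for a cuspidal irrep $\sigma_0$ of $\bar G$. Using $N_x' = A' \cdot G_x'$, the lattice $\Gamma := N_{x,\rho}'/G_x'$ is a finite-index sublattice of $Y^{W\rtimes\Fr} = X_*(\alg{A})$, and for $\gamma \in \Gamma$ the conjugation action on $\bar M_x'$ of a lift $\widetilde{\gamma(\varpi)} \in A'$ is a ``shear'' $(g,\lambda) \mapsto (g,\chi_\gamma(g)\lambda)$ for a character $\chi_\gamma$ of $\bar G$ computed from the tame symbol; hence $\gamma \in \Gamma$ forces $\sigma_0 \otimes (\epsilon'\circ\chi_\gamma) \cong \sigma_0$. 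Comparing central characters, $\epsilon'\circ\chi_\gamma$ is trivial on the image of $\alg{A}(\F_q)$ in $\bar G$, which by the Brylinski--Deligne commutator formula \eqref{hcocyc} translates into $B_Q(\gamma,\delta) \in n\Z$ for all $\delta \in Y^{W\rtimes\Fr}$, in particular for $\delta \in \Gamma$. Since $\rho$ factors through the $\mu_n$-cover, on which $\widetilde{\gamma(\varpi)}$ and $\widetilde{\delta(\varpi)}$ commute whenever $n \mid B_Q(\gamma,\delta)$, the elements $\widetilde{\gamma(\varpi)}$, $\gamma \in \Gamma$, generate a free abelian subgroup $\Gamma_0 \cong \Gamma$ with $\Gamma_0 \cap G_x' = 1$ and $\Gamma_0 \cdot G_x' = N_{x,\rho}'$; that is, $N_{x,\rho}' \cong G_x' \rtimes \Gamma_0$.

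With this splitting, one constructs $\rho_e$ by choosing, for each $\gamma \in \Gamma_0$, an intertwiner $\Phi_\gamma \in GL(V_\rho)$ realizing $\sigma_0 \cong \sigma_0 \otimes (\epsilon'\circ\chi_\gamma)$ (taking $\Phi_\gamma = \mathrm{id}$ when $\chi_\gamma$ is trivial, i.e.\ when $\widetilde{\gamma(\varpi)}$ already centralizes $G_x'$ modulo $G_x^+$), and setting $\rho_e(g\cdot n_0) = \rho(g)\Phi_\gamma$ for $g \in G_x'$ and $n_0 \in \Gamma_0$ over $\gamma$. The only point to verify is that the $\Phi_\gamma$ can be chosen multiplicatively, i.e.\ that the resulting scalar $2$-cocycle on $\Gamma_0$ is a coboundary; this cocycle is pulled back along $\Gamma_0 \twoheadrightarrow \bar C$, where $\bar C$ is the \emph{finite} group of self-twists of $\sigma_0$ (a subgroup of $\widehat{\bar G}$ stabilizing $[\sigma_0]$), so its class lies in $H^2(\bar C,\C^\times)$. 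I expect this vanishing to be the main obstacle: it is automatic in the generic case, where $\bar C$ is trivial, while in general I would argue that $\bar C$ is cyclic -- so that $H^2(\bar C,\C^\times) = 0$ -- or appeal to a finite-group extendibility result for cuspidal characters under diagonal automorphisms of $\bar G$. Granting this, $\rho_e$ is a well-defined irreducible extension of $\rho$ to $N_{x,\rho}'$, and together with the first paragraph this proves the proposition.
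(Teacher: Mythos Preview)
Your route is far more elaborate than the paper's.  The paper dispatches the proposition in two sentences: since $Y_{x,\rho}=N_{x,\rho}'/G_x'$ is a subgroup of the lattice $Y^{W\rtimes\Fr}$, it is a finite-rank free abelian group; the Mackey obstruction to extending $\rho$ to $N_{x,\rho}'$ is a class in $H^2(Y_{x,\rho},\C^\times)$, which the paper asserts vanishes ``since $Y_{x,\rho}$ is free,'' and then ``the rest is standard Clifford theory.''  There is no splitting of the residual extension, no commutator computation, and no finite self-twist group in the paper's argument.

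By contrast you try to build $\rho_e$ by hand: split $\bar M_x'\cong\bar G\times\F_q^\times$, show via central characters that $n\mid B_Q(\gamma,\delta)$ for $\gamma,\delta\in Y_{x,\rho}$, deduce that lifts $\widetilde{\gamma(\varpi)}$ commute in the $\mu_n$-cover so that $N_{x,\rho}'\cong G_x'\rtimes\Gamma_0$, and then attempt to choose intertwiners $\Phi_\gamma$ multiplicatively.  Your central-character step is correct and is a genuine observation (it is close in spirit to what the paper later proves in Proposition~\ref{ChiProp} and Corollary~\ref{cor-key}), but the paper does not use it here.

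Your argument has a real gap at the end, which you flag yourself: you reduce the multiplicativity of the $\Phi_\gamma$ to the vanishing of a class in $H^2(\bar C,\C^\times)$ for the finite self-twist group $\bar C$, and then only conjecture that $\bar C$ is cyclic or gesture at an unspecified extendibility theorem.  Neither is established.  In fact this residual obstruction is exactly the alternating part of the Mackey class on $Y_{x,\rho}$ -- the scalar $[\Phi_{\gamma_1},\Phi_{\gamma_2}]$ -- so your detour through $\bar C$ does not genuinely sidestep the cohomological issue; it repackages it.

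It is worth noting that the paper's one-line justification is itself terse: literally $H^2(\Z^k,\C^\times)\cong(\C^\times)^{\binom{k}{2}}$ is nonzero for $k\ge 2$, so what must vanish is the \emph{specific} Mackey class, not the ambient group.  The paper treats this as routine and does not elaborate; your constructive approach surfaces exactly the point that needs work, but does not finish it.
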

\begin{proof}
The only subtlety is the extension of $\rho$ to its stabilizing subgroup $N_{x,\rho}'$.  For this, define $Y_{x, \rho} = N_{x,\rho}' / G_x'$, and note that
$$Y_{x,\rho} = \frac{N_{x, \rho}'}{ G_x'} \subset \frac{N_x'}{G_x'} \isom Y^{W \rtimes \Fr}.$$
In particular, $Y_{x,\rho}$ is a finite-rank free abelian group.  The obstruction to extending $\rho$ to a representation of $N_{x,\rho}'$ is a cohomology class in $H^2(Y_{x,\rho}, \C^\times)$, which vanishes since $Y_{x,\rho}$ is free.  Thus $\rho$ extends, and the rest is standard Clifford theory.
\end{proof}
The following diagram summarizes the groups and representations.
$$\begin{tikzcd}
\bar M' & G_x' \onarrow{l}{\text{ mod } G_x^+} \inarrow{r}{\subset} & N_{x,\rho}' \inarrow{r}{\subset} & N_x'  \inarrow{rrr}{\subset} & & & G' \\
\rho \arrow[mapsto]{r}{\text{pull back}} & \rho \arrow[mapsto]{r}{\text{extend}} & \rho_e \arrow[mapsto]{r}{\text{induce}} & \tau \arrow[mapsto]{rrr}{\text{compactly induce}} & & &  \pi_{x,\tau}
\end{tikzcd}$$

\subsection{Bounding the dimension}

For linear groups, the method of Gelfand-Kazhdan \cite{GK} yields uniqueness of Whittaker models for all quasisplit groups -- see \cite{Rod2}, \cite{Shal}.  For covering groups, the method of Gelfand-Kazhdan seems difficult to adapt (at least, when $\tilde T$ is nonabelian).  But for depth zero representations, one can start from uniqueness of Whittaker models for finite reductive groups, and deduce a multiplicity bound for Whittaker models for $\tilde G$.

If $x$ is a hyperspecial point, write $\alg{\bar U}_x$ for the unipotent radical of the Borel subgroup $\alg{\bar B}_x \subset \alg{\bar M}_x$, associated to our previous choice of chamber.  If $\bar \psi \From \bar U_x \To \C^\times$ is a character, then as we found for groups over $F$, we may decompose $\bar \psi$ as a product of characters,
$$\bar \psi = \prod_{\beta \in \Delta_{x, \rel}} \bar \psi_\beta, \quad \bar \psi_\beta \From \bar U_{x, \beta} / \bar U_{x, 2 \beta} \To \C^\times.$$
We say that $\bar \psi$ is {\em generic} if $\bar \psi_\beta$ is nontrivial for every $\beta \in \Delta_{x, \rel}$.  

If $\rho$ is an irreducible representation of $\bar M_x'$, and $\bar \psi$ is a generic character, then we write $\Wh_{\bar \psi}(\rho)$ for the space $\Hom_{\bar U_x}(\rho, \bar \psi)$ of Whittaker models.  By Frobenius reciprocity, its dimension equals the multiplicity of $\rho$ in the Gelfand-Graev representation $\Ind_{\bar U_x}^{\bar M_x'} \psi$.  But this representation is multiplicity-free, following Gelfand-Graev \cite{GelGr} (in special cases), Yokonuma \cite{Yok}, and Steinberg \cite[Theorem 49]{Ste}.
\begin{thm}
The representation $\Ind_{\bar U_x}^{\bar M_x'} \bar \psi$ is multiplicity-free, and hence $\dim \Wh_{\bar \psi}(\rho) \leq 1$ for every irrep $\rho$ of $\bar M_x'$ and every generic character $\bar \psi$ of $\bar U_x$.
\end{thm}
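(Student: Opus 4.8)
The plan is to reduce everything to the classical multiplicity-one theorem for Gelfand--Graev representations of finite reductive groups, and then to check that the central extension $\F_q^\times\into\bar M_x'\onto\bar M_x$ does not spoil it. The bound $\dim\Wh_{\bar\psi}(\rho)\le 1$ will follow from the multiplicity-freeness of $\Ind_{\bar U_x}^{\bar M_x'}\bar\psi$ by Frobenius reciprocity, exactly as noted just before the statement, so the only real content is the multiplicity-freeness, which I would establish by Gelfand's trick. Since the cover splits canonically over unipotent subgroups, $\bar U_x\cong\alg{\bar U}_x(\F_q)$ is a subgroup of $\bar M_x'$ and $\End_{\bar M_x'}\!\big(\Ind_{\bar U_x}^{\bar M_x'}\bar\psi\big)$ is the convolution algebra $\mathcal H$ of functions $f$ on $\bar M_x'$ with $f(u_1gu_2)=\bar\psi(u_1)\bar\psi(u_2)f(g)$ for $u_i\in\bar U_x$. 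It then suffices, by Gelfand's lemma, to produce an anti-automorphism $\sigma$ of $\bar M_x'$ stabilizing $\bar U_x$ with $\bar\psi\circ\sigma=\bar\psi$ and stabilizing each $(\bar U_x,\bar U_x)$-double coset: then $f\mapsto f\circ\sigma$ is an algebra anti-automorphism of $\mathcal H$ (because $\sigma$ reverses products) which is the identity, so $\mathcal H$ is commutative and $\Ind_{\bar U_x}^{\bar M_x'}\bar\psi$ is multiplicity-free.

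For the \emph{linear} group $\alg{\bar M}_x$ the required $\sigma$ is exactly the anti-automorphism exploited in the proof of \cite[Theorem 49]{Ste} (see also \cite{Yok}, and \cite{GelGr} for the general linear group): fix a pinning of $\alg{\bar M}_x$ adapted to $(\alg{\bar B}_x,\alg{\bar T})$, take the Chevalley anti-involution $e_\alpha(t)\mapsto e_{-\alpha}(t)$ (trivial on $\alg{\bar T}$), compose with conjugation by a representative $\dot w_0$ of the longest element of the relative Weyl group of $\alg{\bar M}_x$ to obtain an anti-automorphism stabilizing $\alg{\bar B}_x$, $\alg{\bar U}_x$ and permuting $\Delta_{x,\rel}$ by the opposition involution, then conjugate further by a suitable element of $\bar T$ (available because $\bar\psi$ is generic) so that $\bar\psi$ is fixed; the double-coset property is then a computation with the Bruhat decomposition of $\bar M_x$. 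So the task becomes to transport this anti-automorphism to $\bar M_x'$.

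First I would note that all of the ingredients lift to $\bar M_x'$: the cover splits canonically over each root subgroup, giving lifts $e_\alpha',e_{-\alpha}'$ and hence $n_\alpha'$, which obey the cocycle-twisted relations of \cite[\S11]{BD} (such as \eqref{hcocyc}); the Chevalley anti-involution is defined on the pinning and so lifts; and $\dot w_0$, being a product of the $n_\alpha'$, lifts as well. The point where the covering genuinely enters is showing that the composite lifts to an anti-automorphism $\sigma$ of $\bar M_x'$ that is trivial on the central $\F_q^\times$; conceptually this should hold because the associated automorphism of the cocharacter lattice $Y$ lies in $W\rtimes\{\pm1\}$, and the classifying data of $\bar M_x'$ from \S\ref{ResSection} — which are controlled by the Weyl-invariant, even quadratic form $Q$ — are preserved by that group. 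Once $\sigma$ exists, $\bar\psi\circ\sigma=\bar\psi$ is arranged by a $\bar T$-conjugation as in the linear case (a leftover central automorphism $\bar M_x\to\F_q^\times$ is harmless since $\bar U_x$ is a $p$-group and $p\nmid q-1$), and — because $\F_q^\times$ is central in $\bar M_x'$ and meets $\bar U_x$ trivially — the $(\bar U_x,\bar U_x)$-double cosets of $\bar M_x'$ are exactly the full preimages of those of $\bar M_x$, so $\sigma$ inherits the double-coset property. Gelfand's trick applied to $(\bar M_x',\bar U_x,\bar\psi)$ then finishes the argument.

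The hard part will be this lifting step: checking that the cocycle twists in the relations over $\bar M_x'$ do not obstruct lifting the pinning anti-involution, and that the lift can be normalized to be trivial on $\F_q^\times$, to fix $\bar\psi$, and to keep the double-coset property simultaneously. All the input about the linear group $\alg{\bar M}_x$ — the construction of the anti-involution and the Bruhat-decomposition computation — is standard and can simply be cited. An alternative I would consider is to decompose $\Ind_{\bar U_x}^{\bar M_x'}\bar\psi$ according to the characters of the central $\F_q^\times$ and identify each summand with a Gelfand--Graev representation of a finite group of Lie type; but one still has to control how the cover interacts with this decomposition.
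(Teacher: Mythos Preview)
Your approach would ultimately work, but it is far more laborious than what the paper does, and the reason is that you have missed the key structural point: $\alg{\bar M}_x'$ is \emph{itself} a connected reductive algebraic group over $\F_q$ (a central extension of the reductive group $\alg{\bar M}_x$ by the torus $\alg{\bar G}_m$), and since the extension splits uniquely over unipotent subgroups, $\alg{\bar U}_x$ is the unipotent radical of an $\F_q$-Borel subgroup of $\alg{\bar M}_x'$. Hence Steinberg's theorem \cite[Theorem 49]{Ste} applies \emph{directly} to the pair $(\bar M_x',\bar U_x)$; the paper simply cites this and gives no further argument.

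In particular, the ``hard part'' you flag --- lifting the Chevalley anti-involution from $\alg{\bar M}_x$ to $\alg{\bar M}_x'$ and checking it is trivial on the central $\F_q^\times$ --- dissolves once you take this viewpoint: $\alg{\bar M}_x'$ has its own pinning (with maximal torus $\alg{\bar T}'$ and the same root subgroups), hence its own Chevalley anti-involution, which is automatically the identity on the central torus $\alg{\bar G}_m$; there is nothing to lift and no cocycle bookkeeping to do. Your alternative idea of decomposing by central characters of $\F_q^\times$ is likewise unnecessary for the same reason. So your proposal is correct in outline but takes a detour that the paper avoids by recognizing that the residual extension is already a linear reductive group in its own right.
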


From this, we find a formula for the dimension of the space of Whittaker models for genuine depth zero supercuspidals.
\begin{thm}
\label{Bound1}
Let $\pi_{x, \tau}$ be a genuine, depth zero supercuspidal irrep of $G'$.  If $\pi_{x, \tau}$ is $\psi$-generic, then $x$ is hyperspecial and
$$\dim \Wh_\psi( \pi_{x, \tau} ) = [ N_x' : N_{x, \rho}'] = [Y^{W \rtimes \Fr} : Y_{x,\rho}].$$
\end{thm}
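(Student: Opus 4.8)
The plan is to run a Mackey computation that combines three facts already in hand: the reduction of Theorem~\ref{OneSupp}, which (given $\psi$-genericity) shows $x$ is hyperspecial and $\dim\Wh_\psi(\pi_{x,\tau})=m_y(\psi,\tau)$ for a single $y\in\val(T')\subset Y_{\ad}^\Fr$; the Clifford-theoretic description $\tau\cong\Ind_{N_{x,\rho}'}^{N_x'}\rho_e$ with $\rho_e|_{G_x'}=\rho$; and multiplicity-freeness of the Gelfand--Graev representation $\Ind_{\bar U_x}^{\bar M_x'}\bar\psi$. Fixing $t'\in T'$ with $\val(t')=y$, we have $m_y(\psi,\tau)=\dim\Hom_{U_x}({}^t\psi,\tau)$, and this space is nonzero.

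First I would restrict $\tau=\Ind_{N_{x,\rho}'}^{N_x'}\rho_e$ to $U_x$ and apply Mackey. The structural input is that $U_x\subseteq G_x'$, that $G_x'$ is normal in $N_x'$ and contained in $N_{x,\rho}'$, and that (since $x$ is hyperspecial) $N_x'=A'G_x'$ with $A=\alg{A}(F)$ the maximal split central torus. Hence every conjugate ${}^gN_{x,\rho}'$ ($g\in N_x'$) contains $U_x$, the double coset space $U_x\backslash N_x'/N_{x,\rho}'$ equals $N_x'/N_{x,\rho}'$ (of cardinality $[N_x':N_{x,\rho}']=[Y^{W\rtimes\Fr}:Y_{x,\rho}]$ by \eqref{NmodP}), and one may choose every coset representative $g$ inside $A'$ (since $A'N_{x,\rho}'\supseteq A'G_x'=N_x'$). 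Using $\rho_e|_{G_x'}=\rho$ and normality of $G_x'$, Mackey then yields $\dim\Hom_{U_x}({}^t\psi,\tau)=\sum_{g}\dim\Hom_{U_x}\!\big({}^t\psi,\ {}^g\rho|_{U_x}\big)$, a sum of $[Y^{W\rtimes\Fr}:Y_{x,\rho}]$ terms with $g$ running over representatives in $A'$.

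The delicate step, which I expect to be the main obstacle, is to see that each summand equals $1$. Although $A'$ need not be central in $G'$, centrality of $A$ in $G$ forces $g'\mapsto a'g'(a')^{-1}(g')^{-1}$ to be a homomorphism $G'\to\mu_n$ for each $a'\in A'$; on $G_x'$ it kills $\mu_n$ and kills $G_x^+$ (pro-$p$, while $p\nmid n$), hence descends to a character $\chi_{a'}$ of $\bar M_x'$, so ${}^{a'}\rho\cong\rho\otimes\chi_{a'}$. As $\bar U_x$ is a $p$-group, $\chi_{a'}|_{\bar U_x}$ is trivial, so ${}^{a'}\rho$ and $\rho$ have the same restriction to $U_x$, and every summand equals $\dim\Hom_{U_x}({}^t\psi,\rho|_{U_x})$. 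Finally $\rho$ is inflated from $\bar M_x'$, and the lemma immediately preceding Theorem~\ref{OneSupp}, applied to ${}^t\psi$, gives $\cond_\beta({}^t\psi)=\beta(x)+1$ for all $\beta\in\Delta_{x,\rel}=\Delta_{\rel}$; equivalently, ${}^t\psi$ is trivial on $U\cap G_x^+$ and descends to a \emph{generic} character $\bar\psi$ of $\bar U_x$. Therefore $\dim\Hom_{U_x}({}^t\psi,\rho|_{U_x})=\dim\Wh_{\bar\psi}(\bar\rho)$, which is $\le1$ by multiplicity-freeness of the Gelfand--Graev module and is nonzero since $m_y(\psi,\tau)\neq0$. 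Assembling the pieces gives $\dim\Wh_\psi(\pi_{x,\tau})=m_y(\psi,\tau)=[N_x':N_{x,\rho}']=[Y^{W\rtimes\Fr}:Y_{x,\rho}]$, the last equality being \eqref{NmodP} together with the definition $Y_{x,\rho}=N_{x,\rho}'/G_x'$.
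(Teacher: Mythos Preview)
Your proof is correct and follows essentially the same route as the paper: reduce via Theorem~\ref{OneSupp}, decompose $\Res_{G_x'}\tau$ into $[N_x':N_{x,\rho}']$ conjugates of $\rho$ (you phrase this as Mackey, the paper as a direct restriction), show all conjugates have the same $U_x$-Whittaker space, and invoke multiplicity-one for the Gelfand--Graev representation of $\bar M_x'$. The only cosmetic difference is in justifying why conjugation by $a'\in A'$ (the paper uses $z'\in Z'$) leaves $\rho|_{U_x}$ unchanged: you argue that the twisting character $\chi_{a'}$ must vanish on the $p$-group $\bar U_x$, whereas the paper observes directly that $Z'$ acts trivially on $U$ because the splitting of $U$ in $G'$ is canonical and hence conjugation-invariant.
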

\begin{proof}
By Theorem \ref{OneSupp}, we know that $x$ is hyperspecial and
$$\dim \Wh_\psi( \pi_{x, \tau} ) = m_y( \psi, \tau),$$
for some $y = \val(t') \in Y_{\ad}^{\Fr}$.  Define $\bar \psi$ to be the character of $\bar U_x$ obtained by restricting ${}^{t} \psi$ to $U_x$; as $x$ is hyperspecial, $\Delta_{\rel} = \Delta_{\rel, x}$ and we find that $\bar \psi$ is generic.

Recall furthermore that $\tau$ is an irrep of $N_x'$, and 
$$\tau \isom \Ind_{N_{x, \rho}'}^{N_x'} \rho_e, \text{ and } \Res_{G_x'}^{N_{x, \rho}'} \rho_e = \rho.$$
Thus we have
\begin{equation}
\label{ResInd}
\Res_{G_x'}^{N_x'} \tau \isom \bigoplus_{i=1}^\ell \rho_i,
\end{equation}
where $\ell = [ N_x' : N_{x, \rho}'] = [Y^{W \rtimes \Fr} : Y_{x,\rho}]$, and $\rho_i$ is an irrep of $G_x'$ trivial on $G_x^+$.  

Since $x$ is hyperspecial, the normalizer $N_x = Z \cdot G_x$.  As the representations $\rho_i$ form a single $N_x'$-orbit, and $G_x \subset N_x$, there exist $z_1', \ldots, z_\ell' \in Z'$ such that
$$\rho_i \isom {}^{z_i'} \rho_e \text{ as representations of } N_{x, \rho}'.$$
Note that although $Z'$ (the preimage of $Z$ in $G'$) is not necessarily central in $G$, elements $z' \in Z'$ act trivially by conjugation on $U$.  This follows from the canonical -- and thus conjugation-invariant -- splitting of $U$ in $G'$.  Therefore,
$$\dim \Wh_{\bar \psi}(\rho_i) = \dim \Wh_{\bar \psi}(\rho), \text{ for all } 1 \leq i \leq \ell.$$

From \eqref{ResInd}, it follows that
$$\dim \Wh_\psi( \pi_{x, \tau} ) = m_y( \psi, \tau) = \sum_{i=1}^\ell \dim \Wh_{\bar \psi}(\rho_i) = \ell.$$
The last inequality follows from the uniqueness of Whittaker models for the finite reductive group $\bar M_x'$.
\end{proof}

\subsection{A refinement}

To refine Theorem \ref{Bound1}, we analyze the index
$$[ Y^{W \rtimes \Fr} : Y_{x, \rho} ],$$
noting that 
$$Y_{x,\rho} = \{ y \in Y^{W \rtimes \Fr} : \rho \isom {}^{y(\varpi)} \rho \}.$$

Equivalently, we analyze the conjugation action of $A / A_\circ$ on the isomorphism classes of representations of $G_x'$.  For this, note that if $a \in A$ and $g \in G$, then $[a,g] = 1$ since $A$ is central in $G$.  Therefore, choosing lifts $a' \in A'$ and $k' \in G_x'$, we have
$$[a',k'] \defeq a' \cdot k' \cdot (a')^{-1} \cdot (k')^{-1} \in \F_q^\times.$$
\begin{lm}
\label{ChiLem}
If we define $\chi_a \From G_x \To \F_q^\times$ by $\chi_a(k) = [a', k']$, then $\chi_a$ is a homomorphism, and the map $a \mapsto \chi_a$ defines a homomorphism,
$$\chi \From A \To \Hom(G_x, \F_q^\times).$$
\end{lm}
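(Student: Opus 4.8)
The plan is to verify that $\chi_a$ is a well-defined homomorphism $G_x \to \F_q^\times$ and then that $a \mapsto \chi_a$ is a homomorphism $A \to \Hom(G_x, \F_q^\times)$, working entirely inside the full tame extension $\F_q^\times \into G' \onto G$. First I would observe that although $A' \subset G'$ may be nonabelian, elements of $A$ are central in $G$, so for any $a' \in A'$ and $k' \in G_x'$ the commutator $[a',k'] = a' k' (a')^{-1} (k')^{-1}$ maps to $[a,k] = 1$ in $G$, hence lies in the central kernel $\F_q^\times$. Next I would check independence of the chosen lifts: replacing $a'$ by $\zeta a'$ or $k'$ by $\zeta' k'$ with $\zeta, \zeta' \in \F_q^\times$ leaves $[a',k']$ unchanged because $\F_q^\times$ is central in $G'$. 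So $\chi_a(k) := [a', k']$ is well-defined, and in particular depends only on $a$, not $a'$.

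The key computational step is the commutator identity. Since $\F_q^\times$ is central, for fixed $a'$ the map $k' \mapsto [a',k']$ descends to a map $G_x \to \F_q^\times$, and the standard commutator identity $[a', k_1' k_2'] = [a',k_1'] \cdot {}^{k_1'}[a',k_2']$ collapses — because $[a',k_2'] \in \F_q^\times$ is central — to $[a', k_1' k_2'] = [a',k_1'] \cdot [a',k_2']$, giving multiplicativity of $\chi_a$ in the $G_x$-variable. Symmetrically, for fixed $k'$ the identity $[a_1' a_2', k'] = {}^{a_1'}[a_2',k'] \cdot [a_1',k']$ collapses for the same reason to $[a_1' a_2', k'] = [a_2',k'] \cdot [a_1', k'] = \chi_{a_1}(k) \chi_{a_2}(k)$, which says $\chi_{a_1 a_2} = \chi_{a_1} \chi_{a_2}$ in $\Hom(G_x, \F_q^\times)$. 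Thus $a \mapsto \chi_a$ is a homomorphism once we know each $\chi_a$ lands in $\Hom(G_x, \F_q^\times)$, which the previous step provides. I would also note smoothness/continuity is automatic: $\chi_a$ is trivial on $G_x^+$ since $A'$ normalizes $G_x^+$ (from \eqref{respullback}, $G'$ splits canonically over $G_x^+$ and $N_x'$ stabilizes it), so $\chi_a$ factors through the finite group $\bar M_x$ and is locally constant.

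The only real subtlety — and the step I expect to require the most care — is confirming that the naive commutator identities truly reduce to the additive ones; this hinges precisely on $[a', k_j']$ lying in the \emph{center} of $G'$ (not merely in the kernel $\F_q^\times$, though here the kernel is central by construction of a central extension), together with well-definedness under change of lifts so that the residual $\F_q^\times$-ambiguities cancel. Everything else is formal manipulation in a central extension. There are no deep obstacles: the statement is essentially a packaging of the elementary fact that commutator pairing against a central subgroup (here $A$, central in $G$) lifts to a bilinear pairing valued in the central kernel of the extension.
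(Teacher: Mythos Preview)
Your proof is correct and follows essentially the same approach as the paper: both arguments reduce to the observation that $[a',k'] \in \F_q^\times$ is central, so the standard commutator identities collapse to bilinearity. You add explicit checks of well-definedness under change of lifts and of local constancy, which the paper leaves implicit, but the core argument is the same.
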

\begin{proof}
In a general group, we have the commutator identities,
$$[x,y] = [y,x]^{-1} \text{ and } [xy, z] = [ x, [y,z]] [y,z] [x,z].$$
In our setting, we have $[a', k'] \in \F_q^\times$, which is central.  It follows that
$$[a_1' a_2', k'] = [a_2', k'] [a_1', k'] \text{ for all } a_1, a_2 \in A, k \in G_x.$$
$$[a', g_1' g_2'] = [a', g_1'] [a', g_2'].$$
The result follows.
\end{proof}

The homomorphisms $\Hom(\alg{\bar M}_x, \alg{\bar G}_m)$ (in the category of algebraic groups over $\F_q$), can be identified with $\Hom_{\Z[\Fr]}(Y / Y_{\SC}, \Z)$, since $x$ is hyperspecial.  The identification can be made as follows:  if $\alg{\bar \chi} \From \alg{\bar M}_x \To \alg{\bar G}_m$ is a homomorphism, and $y \in Y$, then there is a unique integer $\kappa(y)$ satisfying
$$\alg{\bar \chi}(y(z)) = z^{\kappa(y)} \text{ for all } z \in \bar \F_q^\times.$$
This $\kappa$ factors through a homomorphism $\kappa \From Y / Y_{\SC} \To \Z$, and this is compatible with $\Fr$-action throughout.  The map $\alg{\bar \chi} \mapsto \kappa$ gives the identification.

For $\alg{\bar \chi} \in \Hom(\alg{\bar M}_x, \alg{\bar G}_m)$, define $\bar \chi \From \bar M_x \To \F_q^\times$ to be the resulting homomorphism on points.  Recall that $\epsilon' \From \F_q^\times \To \C^\times$ is a fixed homomorphism, factoring through $\mu_n$.  Thus $\epsilon' \circ \bar \chi \From \bar M_x \To \C^\times$ is a character.  Pulling back, we view $\epsilon' \circ \bar \chi$ as a character of $G_x$.

\begin{prop}
\label{ChiProp}
For all $y \in Y^{W \rtimes \Fr}$, there is an isomorphism of representations of $G_x'$,
$${}^{y(\varpi)} \rho \isom \rho \otimes (\epsilon' \circ \bar \chi),$$
where $\alg{\bar \chi} \in \Hom(\alg{\bar M}_x, \alg{\bar G}_m)$ corresponds to $\kappa_y \in \Hom_{\Z[\Fr]}(Y / Y_{\SC}, \Z)$, with 
$$\kappa_y(y') = B_Q(y, y').$$
\end{prop}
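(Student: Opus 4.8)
The plan is to unwind the conjugation action by means of the commutator character of Lemma~\ref{ChiLem}, and then to evaluate that character using the Brylinski--Deligne computations for the cover of a torus.

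\emph{Reduction.} Since $y\in Y^{W\rtimes\Fr}$ is a cocharacter of the maximal split central torus $\alg{A}$, the element $a\defeq y(\varpi)$ lies in $A$, so Lemma~\ref{ChiLem} applies. For any lift $a'\in A'$ the homomorphism $\chi_a\From G_x\To\F_q^\times$, $\chi_a(k)=[a',k']$, is independent of the lifts chosen (as $\F_q^\times$ is central), and since $\F_q^\times$ acts on $\rho$ through $\epsilon'$, conjugation by $a'$ carries $\rho$ to $\rho\otimes(\epsilon'\circ\chi_a)$. It therefore suffices to show that $\chi_a=\bar\chi\circ\pr$, where $\pr\From G_x\onto\bar M_x$ and $\bar\chi\in\Hom(\alg{\bar M}_x,\alg{\bar G}_m)$ is the character corresponding, under the identification recalled before the statement, to $\kappa_y(y')=B_Q(y,y')$.

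\emph{Computation on the torus.} Because the identity to be proven is $\Fr$-invariant, I may pass to an unramified extension of $\O$ over which $\alg{G}$, hence $\alg{T}$, splits, exactly as in the proof of Theorem~\ref{parahoric-cover}. There $T_\circ$ is generated by the elements $z(v)$ with $z\in Y$ and $v\in\O^\times$, and the Brylinski--Deligne commutator computation for the cover $\alg{T}'$ of the split torus (see \cite[\S 11--12]{BD}), pushed through the tame symbol $\partial$, gives $[\,\wt{y(\varpi)},\wt{z(v)}\,]=\partial\{\varpi,v\}^{B_Q(y,z)}$. As $\val(\varpi)=1$ and $\val(v)=0$, one has $\partial\{\varpi,v\}=\pr(v)=\bar v$, whence $\chi_a(z(v))=\bar v^{B_Q(y,z)}$. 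Moreover $B_Q(y,-)\in X$ vanishes on $Y_{\SC}$: the form $Q$, hence $B_Q$, is $W$-invariant, and the reflection $s_\alpha$ fixes $y$ (as $y\in Y^W$, so $\langle\alpha,y\rangle=0$) while sending $\alpha^\vee\mapsto-\alpha^\vee$, so $B_Q(y,\alpha^\vee)=-B_Q(y,\alpha^\vee)=0$ for every root $\alpha$. Thus $B_Q(y,-)$ descends to $\kappa_y\in\Hom(Y/Y_{\SC},\Z)$, which is $\Fr$-equivariant since $y$ and $Q$ are, and the associated algebraic character $\bar\chi$ satisfies $(\bar\chi\circ\pr)(z(v))=\bar v^{B_Q(y,z)}$. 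Hence $\chi_a$ and $\bar\chi\circ\pr$ agree on $T_\circ$, and therefore on $\bar T=\pr(T_\circ)$.

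\emph{Extension to $\alg{\bar M}_x$.} For a root $\alpha\in\Phi_x$ of $\alg{\bar M}_x$, the relation $\langle\alpha,y\rangle=0$ forces $y(s)$ to centralize $e_\alpha(u)$ in $\alg{G}$; by conjugation-equivariance of the canonical splitting of unipotent subgroups, the lift $a'$ centralizes $e_\alpha'(u)$, so $\chi_a$ is trivial on the root subgroups of $G_x$, as is $\bar\chi\circ\pr$. Both are also trivial on $G_x^+$: for $k\in G_x^+$ the commutator $[a',k']$ lies both in $\F_q^\times$ and in the canonically split, pro-$p$ copy of $G_x^+$, hence is $1$. Therefore both descend to characters of $\bar M_x=G_x/G_x^+$, and since $\bar M_x$ is generated by $\bar T$ together with its root subgroups (for $q$ not too small), the two characters coincide; combined with the reduction step this proves the proposition. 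I expect the main obstacle to be the torus computation, namely correctly invoking the Brylinski--Deligne commutator formula for $\alg{T}'$ and transcribing it through the tame symbol with the sign convention that yields $B_Q(y,z)$ rather than its negative; the remaining points (the vanishing $B_Q(y,\alpha^\vee)=0$, the triviality of $\chi_a$ on unipotent subgroups, and passing from a character of $G_x$ to one of $\bar M_x$) are routine.
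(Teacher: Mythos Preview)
Your proof is correct and follows essentially the same route as the paper: both reduce via Lemma~\ref{ChiLem} to identifying the commutator character $\chi_a$, note its triviality on unipotent subgroups (from the canonicity of the splitting), and then compute it on the torus over the unramified extension using the Brylinski--Deligne commutator formula $[y(\varpi)',z(v)']=\partial\{\varpi,v\}^{B_Q(y,z)}$. You supply a few details the paper leaves implicit (the vanishing $B_Q(y,\alpha^\vee)=0$ and the factoring through $G_x/G_x^+$), and your caveat ``for $q$ not too small'' is unnecessary since a character of the reductive quotient is already determined by its restriction to a maximal torus.
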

\begin{proof}
The previous lemma, and the fact that $\rho$ is $\epsilon'$-genuine, guarantee that
$${}^{y(\varpi)} \rho \isom \rho \otimes (\epsilon' \circ \bar \chi_a),$$
where $\chi_a \From G_x \To \F_q^\times$ is the character arising from $a = y(\varpi)$.  So it remains to prove that $\chi_a$ arises from the character $\alg{\bar \chi}$ described in this proposition.

The character $\chi_a$ is trivial on the unipotent subgroups of $G_x$, since splittings of $G'$ over unipotent subgroups are canonical, and hence conjugation-invariant.  Thus the character $\chi_a$ is determined by its values on the maximal torus $\bar T \subset G_x$.  It suffices to work over $F^{\unr}$ and $\bar \F_q$, since everything in sight is $\Fr$-equivariant.  (See \cite[\S 12.9-12.12]{BD}, for example.)  If $y'(\bar z) \in \bar T$ for some $y' \in Y$ and $\bar z \in \bar  \F_q^\times$, then for any lift $z \in \OO^{\unr}$ of $\bar z$,  the commutator $\chi_a(y'(z))$ in $G'$ is given by the formula of \cite[Corollary 3.14]{BD},
\begin{equation}
\label{ChiComm}
\chi_a(y'(z)) = \partial \{ \varpi, z \}^{B_Q(y, y')} = \bar z^{B_Q(y,y')}.
\end{equation}
This agrees with the claim of the proposition.
\end{proof}

\begin{cor} \label{cor-key}
For $y \in Y^{W \rtimes \Fr}$, write $\bar \chi_y \From \bar M_x \To \F_q^\times$ for the character corresponding to $\kappa_y$.  Then 
$$Y_{x, \rho} = \{ y \in Y^{W \rtimes \Fr} : \rho \isom \rho \otimes (\epsilon' \circ \bar \chi_y) \}.$$
Moreover, $Y_{x, \rho} \supset Y^{W \rtimes \Fr} \cap Y_{Q,n}$.
\end{cor}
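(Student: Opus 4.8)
The plan is to deduce both assertions from Proposition~\ref{ChiProp} together with the explicit form of the character $\epsilon'$.

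For the equality, I would recall that, by the discussion preceding Corollary~\ref{cor-key}, the lattice $Y_{x,\rho}$ is by definition $\{y \in Y^{W \rtimes \Fr} : \rho \isom {}^{y(\varpi)}\rho\}$, where we use the identification $N_x'/G_x' \isom Y^{W \rtimes \Fr}$ of \eqref{NmodP} and ${}^{y(\varpi)}\rho$ denotes the conjugate of $\rho$ by a lift of $y(\varpi)$ in $N_x'$. Proposition~\ref{ChiProp} provides, for every such $y$, an isomorphism ${}^{y(\varpi)}\rho \isom \rho \otimes (\epsilon' \circ \bar\chi_y)$ of representations of $G_x'$. Substituting this into the defining condition gives immediately $Y_{x,\rho} = \{y \in Y^{W \rtimes \Fr} : \rho \isom \rho \otimes (\epsilon' \circ \bar\chi_y)\}$, which is the first claim.

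For the containment, I would take $y \in Y^{W \rtimes \Fr} \cap Y_{Q,n}$ and show that the character $\epsilon' \circ \bar\chi_y$ of $\bar M_x$ is trivial, which by the first part forces $y \in Y_{x,\rho}$. By Proposition~\ref{ChiProp}, $\bar\chi_y$ corresponds to $\kappa_y \in \Hom_{\Z[\Fr]}(Y/Y_{\SC}, \Z)$ with $\kappa_y(y') = B_Q(y, y')$. The hypothesis $y \in Y_{Q,n}$ says exactly that $\kappa_y(y') = B_Q(y,y') \in n\Z$ for all $y' \in Y$, hence $\kappa_y = n \cdot \kappa_y'$ for a unique $\kappa_y' \in \Hom_{\Z[\Fr]}(Y/Y_{\SC}, \Z)$. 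Under the identification of $\Hom_{\Z[\Fr]}(Y/Y_{\SC},\Z)$ with algebraic characters $\alg{\bar M}_x \To \alg{\bar G}_m$, and hence on $\F_q$-points with homomorphisms $\bar M_x \To \F_q^\times$, multiplication by $n$ on the source corresponds to the $n$-th power on characters; therefore $\bar\chi_y(m) = \bar\chi_y'(m)^n$ for all $m \in \bar M_x$, where $\bar\chi_y'$ is the character attached to $\kappa_y'$. Finally, since $n \mid q-1$, the composite $\epsilon' \From \F_q^\times \xrightarrow{(q-1)/n} \mu_n \xrightarrow{\epsilon} \C^\times$ satisfies $\ker \epsilon' = (\F_q^\times)^n$, so it kills every $n$-th power. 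Consequently $(\epsilon' \circ \bar\chi_y)(m) = \epsilon'\big(\bar\chi_y'(m)^n\big) = 1$ for all $m$, the character $\epsilon' \circ \bar\chi_y$ is trivial, and $y \in Y_{x,\rho}$.

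The whole argument is bookkeeping once Proposition~\ref{ChiProp} is available. The only point needing a little care is the compatibility between the additive structure on $\Hom_{\Z[\Fr]}(Y/Y_{\SC},\Z)$ and the multiplicative structure on the corresponding characters — so that divisibility of $\kappa_y$ by $n$ translates into an $n$-th-power statement for $\bar\chi_y$ — together with the elementary verification that $\epsilon'$ annihilates all $n$-th powers in $\F_q^\times$. I do not anticipate any genuine obstacle.
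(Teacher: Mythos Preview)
Your proof is correct and follows essentially the same approach as the paper: the first assertion is an immediate consequence of Proposition~\ref{ChiProp}, and the containment comes from observing that $\epsilon'$ kills $n$-th powers in $\F_q^\times$ while $\bar\chi_y$ lands in $n$-th powers when $y \in Y_{Q,n}$. The paper argues the second point a touch more directly by plugging into the explicit formula~\eqref{ChiComm}, whereas you route it through the factorization $\kappa_y = n\kappa_y'$ and the character correspondence, but the content is the same.
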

\begin{proof}
The main statement follows directly from the previous proposition.  The fact that $Y_{x, \rho} \supset Y^{W \rtimes \Fr} \cap Y_{Q,n}$ follows from the formula in \eqref{ChiComm}.  Indeed, if $y \in Y_{Q,n}$, we find that $B_Q(y,y') \in n \Z$, and so 
$$\epsilon'(\bar x^{B_Q(y, y')} ) = \epsilon \left( \bar x^{ (q-1) B_Q(y,y') / n} \right) = 1.$$
\end{proof}

As we shall see, this gives a practical method of describing $Y_{x, \rho}$ in some important examples.  From this we can compute the index of $[Y^{W \rtimes \Fr} : Y_{x, \rho}]$, which equals the dimension of the space of Whittaker functionals.  To summarize, Theorem \ref{Bound1} and Corollary \ref{cor-key} gives us
\begin{equation}
\label{WhBound}
\dim \Wh_\psi(\pi_{x, \tau}) =  [Y^{W \rtimes \Fr} : Y_{x, \rho}] \text{ divides }  [ Y^{W \rtimes \Fr} : Y^{W \rtimes \Fr}  \cap Y_{Q,n} ] .
\end{equation}
whenever $\pi_{x,\tau}$ is a $\psi$-generic representation.

\subsection{A few easy cases}

In what follows, we maintain the notation from before:  $\pi = \pi_{x,  \tau}$ will be a $\psi$-generic depth zero supercuspidal irrep of $\tilde G$.  We will describe the dimension of the space of Whittaker functionals, $\dim \Wh_\psi(\pi)$, in a few easy cases.  For reference, we recall
\begin{equation}
\label{Wh1}
\dim \Wh_\psi(\pi) =  [Y^{W \ltimes \Fr} : Y_{x,\rho} ] = [N_x' : N_{x,\rho}'], \quad \text{(Theorem \ref{Bound1})}
\end{equation}

\begin{equation}
\label{Wh2}
\dim \Wh_\psi(\pi) = [Y^{W \ltimes \Fr} : Y_{x,\rho} ] \text{ divides } [Y^{W \ltimes \Fr} : Y^{W \ltimes \Fr} \cap Y_{Q,n} ].
\end{equation}

\begin{cor}
If $\alg{G}$ is semisimple then $\dim \Wh_\psi(\pi) = 1$.
\end{cor}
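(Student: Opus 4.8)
The plan is to deduce this directly from Theorem~\ref{Bound1}. Since $\pi = \pi_{x,\tau}$ is $\psi$-generic by our standing assumption, that theorem applies: $x$ is hyperspecial and
$$\dim \Wh_\psi(\pi) = [Y^{W \rtimes \Fr} : Y_{x,\rho}].$$
So the whole statement reduces to the claim that $Y^{W \rtimes \Fr} = 0$. Indeed, $Y_{x,\rho}$ is a subgroup of $Y^{W \rtimes \Fr}$, so if the latter vanishes then $Y_{x,\rho} = 0$ as well and the index $[0:0]$ equals $1$. Equivalently, when $Y^{W \rtimes \Fr} = 0$ the isomorphism \eqref{NmodP} shows $N_x' = G_x'$, hence $N_{x,\rho}' = N_x'$ and $\tau = \rho$, so that $\dim \Wh_\psi(\pi) = [N_x' : N_{x,\rho}'] = 1$.

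It remains to check that $Y^{W \rtimes \Fr} = 0$ when $\alg{G}$ is semisimple. The quickest route is to recall, as noted just before \eqref{NmodP}, that $Y^{W \rtimes \Fr}$ is the cocharacter lattice of the maximal split subtorus $\alg{A}$ of the center $\alg{Z}(\alg{G})$; since $\alg{G}$ is semisimple its center is finite, so $\alg{A}$ is trivial and $Y^{W \rtimes \Fr} = 0$. Alternatively, one can argue directly that even $Y^W = 0$: for any $y \in Y^W$ and any simple root $\alpha$, applying the reflection $s_\alpha$ gives $0 = s_\alpha(y) - y = -\angb{\alpha}{y}\,\alpha^\vee$, and since $\alpha^\vee \neq 0$ in the torsion-free lattice $Y$ we get $\angb{\alpha}{y} = 0$ for all $\alpha \in \Delta$, hence for all $\alpha \in \Phi$. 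As $\alg{G}$ is semisimple, the root lattice has finite index in $X$, so $y$ pairs to $0$ with a finite-index subgroup of $X$, and the perfectness of the pairing forces $y = 0$; thus $Y^{W \rtimes \Fr} \subseteq Y^W = 0$.

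There is no genuine obstacle here: the statement is a formal consequence of the index formula of Theorem~\ref{Bound1} together with the triviality of the central split torus. One could equally well invoke \eqref{Wh2}: since $Y^{W \rtimes \Fr} = 0$, the dimension $\dim \Wh_\psi(\pi)$ divides $[0:0] = 1$, and being positive (as $\pi$ is $\psi$-generic) it must equal $1$.
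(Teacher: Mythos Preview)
Your proof is correct and follows essentially the same route as the paper: both invoke Theorem~\ref{Bound1} and then observe that, for semisimple $\alg{G}$, the quotient $N_x'/G_x' \cong Y^{W \rtimes \Fr}$ is trivial (the paper phrases this as $Z \subset G_x$, which is equivalent to your observation that the central split torus $\alg{A}$ is trivial). Your additional direct verification that $Y^W = 0$ via the root pairing is a nice self-contained alternative, but not needed beyond what the paper uses.
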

\begin{proof}
In this case $Z \subset G_x$ and so $N_x' = G_x'$ (the point $x$ is hyperspecial) and $[N_x' : N_{x,\rho}'] = 1$.  Theorem \ref{Bound1} gives the result.
\end{proof}

The opposite case occurs when $\alg{G} = \alg{T}$, an unramified torus.  Every genuine irrep $\pi$ of $\tilde T$ is supercuspidal and generic.  The space of Whittaker functionals $\Wh_\psi(\pi)$ is the entire space $\pi$, since $\alg{U}$ is trivial.  In this case, we find
\begin{prop}
If $\alg{G} = \alg{T}$, then $\dim \Wh_\psi(\pi) = [Y^\Fr : Y_{Q,n}^\Fr]$.
\end{prop}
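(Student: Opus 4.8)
The plan is to observe that when $\alg{G} = \alg{T}$ the $\psi$-Whittaker condition is vacuous, so $\dim \Wh_\psi(\pi)$ is just $\dim\pi$, and then to read off this dimension from the structure theory of Heisenberg-type covers of tori already recalled in \eqref{ToriH}. First I would note that $\alg{U}$ is trivial, hence $U = \{1\}$ and the relation $\ell(\pi(u)v) = \psi(u)\ell(v)$ defining a $\psi$-Whittaker functional imposes nothing. Therefore $\Wh_\psi(\pi) = \Hom_\C(V_\pi,\C)$ is the full linear dual of the space of $\pi$, and in particular $\dim\Wh_\psi(\pi) = \dim\pi$ (this is finite, since $\pi$ is an irreducible admissible genuine representation of $\tilde T$).

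Next I would identify $\pi = \pi_{x,\tau}$ with an arbitrary irreducible genuine representation of $\tilde T$. Indeed, with $\alg{G} = \alg{T}$ the reduced building is a single point, the torus $T$ normalizes its unique parahoric $T_\circ$, so $N_x' = \tilde T$, and $\pi_{x,\tau} = \cind_{N_x'}^{G'}\tau = \tau$ is simply a genuine irrep of $\tilde T$. Now $\tilde T$ is a group of Heisenberg type, so by \cite[Corollary 3.4]{We2}, recorded in \eqref{ToriH}, every irreducible genuine representation of $\tilde T$ has dimension the central index $\Zind(\tilde T) = \#(Y^\Fr / Y_{Q,n}^\Fr)$. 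Combining the two steps gives $\dim\Wh_\psi(\pi) = \Zind(\tilde T) = [Y^\Fr : Y_{Q,n}^\Fr]$, as claimed.

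I do not expect a genuine obstacle here: the statement is essentially bookkeeping on top of the torus case already isolated in \eqref{ToriH}. The only points that deserve an explicit sentence are the identification $\Wh_\psi(\pi) = V_\pi^\ast$ together with the finite-dimensionality of $\pi$, and the remark that in the torus case $\pi_{x,\tau}$ degenerates to $\tau$. As a consistency check one may also note that Theorem \ref{Bound1} specializes, since $W$ is trivial and $Y^{W\rtimes\Fr} = Y^\Fr$, to $\dim\Wh_\psi(\pi) = [Y^\Fr : Y_{x,\rho}]$; comparison with the formula just proved forces $Y_{x,\rho} = Y_{Q,n}^\Fr$, the nontrivial inclusion $Y_{x,\rho}\supseteq Y_{Q,n}^\Fr$ being exactly Corollary \ref{cor-key} and the reverse inclusion being a byproduct of the direct computation above.
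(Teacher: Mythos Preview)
Your proposal is correct and follows essentially the same approach as the paper: both observe that $\alg{U}$ is trivial so $\Wh_\psi(\pi)$ is the full (dual) space of $\pi$, and then invoke the known dimension formula for genuine irreps of $\tilde T$ (the paper cites \cite[\S 6]{We1}, you cite \eqref{ToriH}/\cite[Corollary 3.4]{We2}, which amounts to the same thing). Your additional consistency check with Theorem \ref{Bound1} and Corollary \ref{cor-key} is not in the paper but is a nice sanity check.
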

\begin{proof}
The dimension of $\pi$ is computed in \cite[\S 6]{We1}.
\end{proof}
Note that in the case $\alg{G} = \alg{T}$, $W$ is trivial and the dimension of $\Wh_\psi(\pi)$ agrees with the upper bound of \eqref{Wh2}.

Between these cases, we have an easy bound.
\begin{prop}
Let $\alg{Z}^\circ$ be the connected component of the center of $\alg{G}$.  Thus $\alg{Z}^\circ$ is an unramified torus, with cocharacter lattice $Y^W$.  Then we have,
$$\Zind(\tilde Z^\circ) = [Y^{W \rtimes \Fr} : (Y^W)_{Q,n}^{\Fr}] \text{ divides } \dim \Wh_\psi(\pi).$$
Here, the lattice $(Y^W)_{Q,n}^{\Fr}$ is defined by
$$(Y^W)_{Q,n}^{\Fr} = \{ y \in Y^{W \rtimes \Fr} : B_Q(y, y') \in n \Z \text{ for all } y' \in Y^W \}.$$
\end{prop}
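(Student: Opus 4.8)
The plan is to treat the two assertions separately: the equality $\Zind(\tilde Z^\circ) = [Y^{W\rtimes\Fr}:(Y^W)^{\Fr}_{Q,n}]$ is a statement purely about covers of tori, while the divisibility follows once we know that $\Wh_\psi(\pi)$ is a finite-dimensional genuine smooth $\tilde Z^\circ$-module and invoke \eqref{ToriH} for $\tilde Z^\circ$.

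For the equality, observe that the preimage $\tilde Z^\circ$ of $Z^\circ = \alg{Z}^\circ(F)$ in $\tilde G$ is itself an unramified cover of the torus $\alg{Z}^\circ$, obtained by restricting $\sch{G}'$ along $\alg{Z}^\circ \into \alg{G}$, taking $F$-points, applying the tame symbol, and pushing out to $\mu_n$. By functoriality of the Brylinski-Deligne classification \cite{BD}, the quadratic-form invariant of this cover is the restriction of $Q$ to the cocharacter lattice $Y^W$ of $\alg{Z}^\circ$, and the degree $n$ is unchanged. Applying \eqref{ToriH} to $\tilde Z^\circ$ therefore gives $\Zind(\tilde Z^\circ) = \#\big((Y^W)^{\Fr}/(Y^W)^{\Fr}_{Q,n}\big)$, where $(Y^W)_{Q,n} = \{y\in Y^W : B_Q(y,y')\in n\Z \text{ for all } y'\in Y^W\}$. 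The desired identity then follows from the observation that $(Y^W)^{\Fr} = Y^{W\rtimes\Fr}$: the lattice $Y^W$ is $\Fr$-stable because $\Fr$ normalizes $W$, and a cocharacter is fixed by all of $W$ and by $\Fr$ precisely when it is fixed by the group $W\rtimes\Fr$.

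For the divisibility, note that $\Wh_\psi(\pi) = \Hom_U(\pi,\psi)$ is the dual of the twisted Jacquet module $(\pi)_{U,\psi}$; the latter is a quotient of the smooth module $\pi$ and carries a $\tilde Z$-action, since $\tilde Z$ normalizes $U$ and, via the canonical splitting of $U$ in $G'$, fixes $\psi$. Hence $\Wh_\psi(\pi)$ is a smooth genuine representation of $\tilde Z$, finite-dimensional by Theorem \ref{Bound1}. Restricting to $\tilde Z^\circ \subseteq \tilde Z$, we obtain a finite-dimensional genuine smooth $\tilde Z^\circ$-module, and I claim that any such module $M$ has $\dim M$ divisible by $\Zind(\tilde Z^\circ)$. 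Argue by induction on $\dim M$: if $M\neq 0$, choose a minimal nonzero $\tilde Z^\circ$-submodule $M_0\subseteq M$; it is irreducible, finite-dimensional, smooth and genuine, so $\dim M_0 = \Zind(\tilde Z^\circ)$ by \eqref{ToriH} applied to $\tilde Z^\circ$; the quotient $M/M_0$ is again finite-dimensional, genuine and smooth, of strictly smaller dimension, so $\Zind(\tilde Z^\circ)\mid \dim(M/M_0)$ by induction, whence $\Zind(\tilde Z^\circ)\mid \dim M_0 + \dim(M/M_0) = \dim M$. Taking $M = \Wh_\psi(\pi)$ completes the proof.

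The only point that requires genuine care is the first step: checking that restricting the Brylinski-Deligne data along $\alg{Z}^\circ\into\alg{G}$ really yields the cover $\tilde Z^\circ$ with quadratic form $Q|_{Y^W}$, and that \eqref{ToriH} (stated in \cite{We2} for the maximal torus $T$) applies verbatim to $\tilde Z^\circ$ even though $Z^\circ$ need not be split — which it does, since the cited argument uses only that $\tilde Z^\circ$ is an unramified cover of a torus, and $\alg{Z}^\circ$ is unramified. Everything else is formal: the induction above uses nothing about $\tilde Z^\circ$ beyond its being an unramified cover of a torus whose irreducible genuine smooth representations all have dimension $\Zind(\tilde Z^\circ)$, together with the fact that finite-dimensionality, smoothness and genuineness pass to $\tilde Z^\circ$-submodules and quotients.
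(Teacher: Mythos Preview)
Your proof is correct and follows the same approach as the paper: observe that $\Wh_\psi(\pi)$ is a finite-dimensional genuine $\tilde Z^\circ$-module, and use that every irreducible genuine representation of $\tilde Z^\circ$ has dimension $\Zind(\tilde Z^\circ)$ to conclude the divisibility. You have simply spelled out in detail what the paper leaves terse --- the derivation of the equality $\Zind(\tilde Z^\circ)=[Y^{W\rtimes\Fr}:(Y^W)_{Q,n}^{\Fr}]$ from \eqref{ToriH}, and the induction underlying the divisibility --- while the paper cites \cite[\S 6]{We1} for the former and leaves the latter implicit.
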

\begin{proof}
The space $\Wh_\psi(\pi)$ is naturally a genuine representation of $\tilde Z^\circ$.  Since $\Wh_\psi(\pi)$ is finite-dimensional, and all genuine representations of $\tilde Z^\circ$ have the same dimension $\Zind(\tilde Z^\circ)$ (computed in \cite[\S 6]{We1}), the result follows.
\end{proof}

To summarize, we have inclusions of lattices,
\begin{equation}
\label{SqueezeWh}
(Y^{W \rtimes \Fr} \cap Y_{Q,n}) \  \subset  \ Y_{x,\rho}  \ \subset (Y^W)_{Q,n}^\Fr  \ \subset \ Y^{W \rtimes \Fr},
\end{equation}
giving upper and lower bounds on $\dim \Wh_\psi(\pi) = [Y^{W \rtimes \Fr} : Y_{x,\rho}]$.


\section{Deligne-Lusztig representations}
We maintain the notation of $(x, \rho, \tau)$, with $\pi_{x, \tau}$ being $\psi$-generic.  Thus $x$ is hyperspecial, and $\alg{\bar G}_x = \alg{\bar M}_x$ is a reductive group over $\F_q$.  The residual extension is $\alg{\bar G}_m \into \alg{\bar G}_x' \onto \alg{\bar G}_x$.  The representation $\rho$ is a cuspidal irrep of $\bar G_x'$; such representations have been studied in detail by Deligne and Lusztig in \cite{DL}.  Such irreps of $\bar G_x'$ arise from characters of tori, as we review below.

\subsection{Construction of Deligne-Lusztig representations}

An exercise in Galois cohomology demonstrates that the $\bar G_x'$-conjugacy classes of maximal $\F_q$-tori in $\alg{\bar G}_x'$ are parameterized by $\Fr$-conjugacy classes in the Weyl group $W$.  Since $x$ is hyperspecial, we are identifying the Weyl group of $\alg{\bar G}_x'$ with respect to $\alg{\bar T}'$ and the Weyl group of $\alg{G}$ with respect to $\alg{T}$.  See \cite[Lemma 4.2.1]{DeB} for a proof of this parameterization (cf.~\cite[\S 1.8]{DL}).  

For $w \in W$, we write ${}^w \alg{\bar T}'$ for a corresponding maximal $\F_q$-torus in $\alg{\bar G}_x'$; its image in $\alg{\bar G}_x$ will be denoted ${}^w \alg{\bar T}$.  Note that ${}^w \alg{\bar T}'$ contains the central $\alg{\bar G}_m$ of the residual extension.  The $\F_q$-points of ${}^w \alg{\bar T}'$ can be described as
$${}^w \bar T' \defeq {}^w \alg{\bar T}'(\F_q) = \{ t' \in \alg{\bar T}'(\bar \F_q) : w(\Fr(t')) = t' \}.$$
As sets, we identify the character and cocharacter lattices of ${}^w \alg{\bar T}$ with those of $\alg{\bar T}$; the Frobenius action is twisted however by $w$, so we write $\Fr_w(\xi) = w(\Fr(\xi))$ for all $\xi \in X$ and $\Fr_w(y) = w(\Fr(y))$ for all $y \in Y$.  The normalization is chosen here so that for all $x \in X$, $y \in Y$, we have
$$\langle \Fr_w(x), \Fr_w(y) \rangle = \langle x, y \rangle.$$
The Weyl group of $\alg{\bar G}_x$ with respect to ${}^w \alg{\bar T}$ is similarly identified with $W$ as sets, but the Frobenius action is twisted to become $\Fr_w(w') = w \Fr(w') w^{-1}$ for all $w' \in W$.

We say that ${}^w \alg{\bar T}$ is {\em minisotropic} if it does not lie in any proper $\F_q$-parabolic subgroup of $\alg{\bar G}_x$.  The same definition applies to ${}^w \alg{\bar T}'$, and minisotropic maximal tori correspond in $\alg{\bar G}_x$ and $\alg{\bar G}_x'$.  Given a minisotropic ${}^w \alg{\bar T}'$, we are interested in characters of the $\F_q$-points,
$$\theta' \From {}^w \bar T' \To \C^\times.$$
We say that $\theta'$ is genuine if it restricts to the character $\epsilon'$ on $\F_q^\times$.  We say that $\theta'$ is in {\em general position} if it is not fixed by any nontrivial element of $W^{\Fr_w}$ (cf.~\cite[Definition 5.15]{DL}). 

From the data of $w$ and $\theta'$, Deligne and Lusztig construct a virtual representation $R( {}^w \bar T', \theta')$ of $\bar G_x'$ such that:
\begin{itemize}
\item If ${}^w \alg{\bar T'}$ is minisotropic, and $\theta'$ is in general position, then for some choice of sign, $\pm R({}^w \bar T', \theta')$ is a cuspidal irrep of $\bar G_x'$.
\item If $\alg{\bar G}_x'$ has connected center (equivalently, if $\alg{G}$ has connected center), these $\pm R({}^w \bar T', \theta')$ are exactly the cuspidal irreps of $\bar G_x'$ which are generic (cf.~\cite[Remark 6.2.7]{DR}).
\end{itemize}
When ${}^w \alg{\bar T'}$ is minisotropic and $\theta'$ is in general position, write $\rho({}^w \bar T', \theta') = \pm R({}^w \bar T', \theta')$ for the resulting cuspidal irrep of $\bar G_x'$.  

If $\theta_1', \theta_2'$ are two characters of ${}^w \bar T'$ in general position, then the following are equivalent (cf.~\cite[Theorem 6.8]{DL})
\begin{enumerate}
\item
The cuspidal irreps are isomorphic:  $\rho({}^w \bar T', \theta_1') \isom \rho({}^w \bar T', \theta_2')$.
\item
The characters $\theta_1'$ and $\theta_2'$ are {\em geometrically conjugate} (see \cite[Definition 5.5]{DL}), written $\theta_1' \sim \theta_2'$.
\end{enumerate}

\begin{prop} \label{P:prep}
Suppose that ${}^w \alg{\bar T}'$ is minisotropic, $\theta'$ is genuine and in general position, and $\rho = \rho({}^w \bar T', \theta')$.  Then 
$$Y_{x, \rho} = \{ y \in Y^{W \rtimes \Fr} : \theta' \sim \theta' \cdot (\epsilon' \circ \bar \chi_y) \},$$
where $\bar \chi_y$ is the character in Corollary \ref{cor-key}, restricted to ${}^w \bar T$. 
\end{prop}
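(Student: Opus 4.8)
The plan is to reduce the identity, via Corollary \ref{cor-key}, to a statement purely about Deligne--Lusztig characters, and then to apply the standard behaviour of such characters under twisting by a linear character of the ambient group.

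First I would invoke Corollary \ref{cor-key}, which gives
$$Y_{x,\rho} = \{\, y \in Y^{W\rtimes\Fr} \ : \ \rho \isom \rho \otimes (\epsilon'\circ\bar\chi_y) \,\},$$
where $\epsilon'\circ\bar\chi_y$ is regarded as a one-dimensional character of $\bar G_x' = \bar M_x'$ via pullback along $\bar M_x' \onto \bar M_x$. So it suffices to prove, for each $y \in Y^{W\rtimes\Fr}$, the equivalence
$$\rho \isom \rho \otimes (\epsilon'\circ\bar\chi_y) \ \iff \ \theta' \sim \theta'\cdot(\epsilon'\circ\bar\chi_y),$$
where on the right $\bar\chi_y$ is understood to be restricted to ${}^w\bar T$ as in the statement. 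Write $\rho = \rho({}^w\bar T',\theta') = \pm R({}^w\bar T',\theta')$, the sign depending only on $w$ and $\alg{\bar G}_x'$, not on $\theta'$. Since $\alg{\bar G}_x' = \alg{\bar M}_x'$ is an honest connected reductive group over $\F_q$, the Deligne--Lusztig formalism applies to it without modification; in particular, for any linear character $\lambda$ of $\bar G_x'$ the twisting identity
$$R({}^w\bar T',\theta') \otimes \lambda = R\bigl({}^w\bar T',\, \theta'\cdot\lambda|_{{}^w\bar T'}\bigr)$$
holds (cf.~\cite{DL}, \cite{Car}). Applying this with $\lambda = \epsilon'\circ\bar\chi_y$ and carrying along the fixed sign, I obtain $\rho \otimes (\epsilon'\circ\bar\chi_y) = \pm R({}^w\bar T',\theta'')$, where $\theta'' \defeq \theta'\cdot(\epsilon'\circ\bar\chi_y)|_{{}^w\bar T'}$.

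Next I would check that $\theta''$ is again genuine and in general position, so that $\pm R({}^w\bar T',\theta'') = \rho({}^w\bar T',\theta'')$ is a genuine cuspidal irrep; then the equivalence in \cite[Theorem 6.8]{DL} --- isomorphism of the cuspidal irreps being equivalent to geometric conjugacy of the characters --- gives $\rho \isom \rho({}^w\bar T',\theta'')$ if and only if $\theta' \sim \theta''$, which is precisely the asserted equivalence. Genuineness of $\theta''$ is immediate, since $\bar\chi_y$ is pulled back from $\bar M_x$ and hence trivial on the central $\F_q^\times$, so $\theta''$ and $\theta'$ agree on $\F_q^\times$. For general position, I would note that $\epsilon'\circ\bar\chi_y$ is a character of the whole finite group $\bar M_x'$, so its restriction to ${}^w\bar T'$ is automatically invariant under the conjugation action of $W^{\Fr_w}$ (equivalently, $\kappa_y(y') = B_Q(y,y')$ is $W$-invariant because $y \in Y^W$ and $Q$ is Weyl-invariant); hence twisting $\theta'$ by it leaves the stabilizer of $\theta'$ in $W^{\Fr_w}$ unchanged, and $\theta''$ is in general position because $\theta'$ is.

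I do not anticipate a serious obstacle here; the argument simply assembles Corollary \ref{cor-key}, the twisting identity for Deligne--Lusztig characters, and \cite[Theorem 6.8]{DL}. The two points deserving care are (i) confirming that the Deligne--Lusztig twisting identity and \cite[Theorem 6.8]{DL} apply to the residual extension $\bar G_x'$, which is fine because $\alg{\bar M}_x'$ is a bona fide connected reductive $\F_q$-group, so the covering-theoretic complications disappear at the finite level; and (ii) verifying that general position survives the twist, which rests on the fact that $\epsilon'\circ\bar\chi_y$ comes from a character of the whole group $\bar M_x'$ and is therefore $W^{\Fr_w}$-invariant on ${}^w\bar T'$.
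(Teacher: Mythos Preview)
Your proposal is correct and follows essentially the same route as the paper: invoke Corollary~\ref{cor-key}, apply the twisting identity $R({}^w\bar T',\theta')\otimes\lambda = R({}^w\bar T',\theta'\cdot\lambda|_{{}^w\bar T'})$ (which the paper justifies via the character formula of \cite[Theorem~4.2]{DL}), and then use \cite[Theorem~6.8]{DL} to convert isomorphism of the cuspidal irreps into geometric conjugacy of the characters. Your write-up is in fact more careful than the paper's, since you explicitly verify that $\theta''$ remains genuine and in general position so that \cite[Theorem~6.8]{DL} legitimately applies.
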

\begin{proof}
This follows directly from Corollary \ref{cor-key}, and the fact that if $\chi \From \bar G_x \To \C^\times$ is a character, then
$$\rho({}^w \bar T', \theta') \cdot \chi \isom \rho({}^w \bar T', \theta' \cdot \chi).$$
This can be shown directly, e.g., by tracing the twist by $\chi$ through the character formula of \cite[Theorem 4.2]{DL}.
\end{proof}

Since we have proven that $\dim \Wh_\psi(\pi_{x, \tau}) = [Y^{W \rtimes \Fr} : Y_{x,\rho}]$, the above proposition gives a tool to compute the dimension of the space of Whittaker functionals.  This requires an analysis of when $\theta'$ is geometrically conjugate to character $\theta \cdot (\eta' \circ \bar \chi_y)$.  Geometric conjugacy is a bit easier to see on the dual side, via Lusztig parameters, and this should be closely related to the Langlands parameterization.

\subsection{Lusztig parameters}

Consider the previous setting, where ${}^w \alg{\bar T}'$ is minisotropic, and $\theta' \From {}^w \bar T' \To \C^\times$ is genuine and in general position.  Associated to this data, Deligne and Lusztig give a semisimple conjugacy class in a dual group, which we review here, following \cite[\S 5]{DL} and \cite[\S 16]{Lus}.  In particular, Lusztig outlines a parameterization using the complex dual group, whereas the previous work of Deligne and Lusztig uses a dual group defined over $\F_q$.  The complex dual group seems more relevant to the Langlands conjectures for covering groups of \cite{We4}.

Choose an isomorphism $\eta \From \bar \F_q^\times \xrightarrow{\sim} (\Q / \Z)_{p'}$ (the prime-to-$p$ subgroup of $\Q / \Z$).  The exponential map $x \mapsto e^{2 \pi i x}$ gives an isomorphism from $(\Q / \Z)_{p'}$ to the group $\mu_{p'}(\C)$ of prime-to-$p$ roots of unity in $\C$.

Recall that $\epsilon \From \mu_n(\F_q) \into \C^\times$ has been chosen, and $\epsilon' \From \F_q^\times \To \C^\times$ is given by $\epsilon'(x) = \epsilon(x^{ (q-1) / n} )$.  For convenience, we assume $\eta$ has been chosen compatibly with $\epsilon$, in the sense that
$$\epsilon(x) = e^{2 \pi i \eta(x) } \text{ for all } x \in \mu_n(\F_q).$$

As ${}^w \alg{\bar T}'$ fits into a short exact sequence $\alg{\bar G}_m \into {}^w \alg{\bar T}' \onto {}^w \alg{\bar T}$, the cocharacter lattice of ${}^w \alg{\bar T}'$ fits into a short exact sequence $\Z \into Y' \onto Y$.  We identify ${}^w \alg{\bar T}'(\bar \F_q) = Y' \otimes \bar \F_q^\times$, and the action of $\Fr$ on ${}^w \alg{\bar T}'(\bar \F_q)$ corresponds to $(\Fr_w \otimes \Fr)$ on $Y' \otimes \bar \F_q^\times$.  But this is the same as the action of $q \Fr_w \otimes \Id$ on $Y' \otimes \bar \F_q^\times$.  Thus we write
$${}^w \bar T' = {}^w \alg{\bar T}'(\F_q) = (Y' \otimes \bar \F_q^\times)^{q \Fr_w}.$$
Using $\eta \From \bar \F_q^\times \xrightarrow{\sim} (\Q / \Z)_{p'}$, we have an identification,
$$\eta \From {}^w \bar T' = \left( Y' \otimes \bar \F_q^\times \right)^{q \Fr_w} \xrightarrow{\sim} \left( Y' \otimes (\Q / \Z)_{p'} \right)^{q \Fr_w}.$$

The complex dual torus is defined by $T'^\vee = X' \otimes \C^\times$.  Following Lusztig \cite[\S 16]{Lus}, we may use the complex dual torus to parameterize characters of ${}^w \bar T'$.  Begin with the pairing,
$$(X' \otimes \Q) \otimes (Y' \otimes \Q) \To \Q, \quad (x,y) \mapsto \langle x, q \Fr_w y - y \rangle.$$
This gives a perfect pairing (see \cite[Eqn.~(5.2.3)*]{DL}),
$$\left( X' \otimes (\Q/\Z)_{p'} \right)^{q \Fr_w} \otimes \left( Y' \otimes (\Q/\Z)_{p'} \right)^{q \Fr_w} \To (\Q/\Z)_{p'}.$$
Applying the exponential map and $\eta$, one finds a perfect pairing,
$$(\bullet, \bullet)_{\Lus} \From \left( X' \otimes \mu_{p'}(\C) \right)^{q \Fr_w} \otimes \left( Y' \otimes \bar \F_q^\times \right)^{q \Fr_w} \To \mu_{p'}(\C).$$
This gives an isomorphism,
$$\Lus \From \Hom( {}^w \bar T', \C^\times) = \Hom(  {}^w \bar T', \mu_{p'}(\C) ) \xrightarrow{\sim} \left( X' \otimes \mu_{p'}(\C) \right)^{q \Fr_w}.$$
Since $X' \otimes \mu_{p'}(\C) \subset X' \otimes \C^\times = T'^\vee$, every character $\theta' \From {}^w \bar T' \To \C^\times$ corresponds to an element $\theta'^\vee = \Lus(\theta) \in T'^\vee$ satisfying
\begin{equation}
\label{thetaq}
(\theta'^\vee)^q = \Fr_w^{-1}(\theta'^\vee).
\end{equation}

Following \cite[Proposition 5.22]{DL}, geometric conjugacy of characters corresponds to conjugacy in the dual group.
\begin{prop}
Two characters $\theta_1', \theta_2' \From {}^w \bar T' \To \C^\times$ are geometrically conjugate if and only if the elements $\theta'^\vee_1, \theta'^\vee_2 \in T'^\vee$ are $W$-conjugate.
\end{prop}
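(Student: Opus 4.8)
The plan is to identify this statement with Proposition~5.22 of \cite{DL}, transported from the $\F_q$-rational dual group used there to the complex dual torus $T'^\vee$, via the chosen isomorphism $\eta$ and the exponential $x \mapsto e^{2\pi i x}$, as outlined in \cite[\S 16]{Lus}. So the first step is to recall \cite[Proposition~5.22]{DL} in its original form: geometric conjugacy of two characters of the fixed $\F_q$-torus ${}^w\bar T'$ is equivalent to conjugacy, in the relevant maximal torus of the dual group, of the two associated semisimple elements under the Weyl group, the latter being canonically identified with $W$.

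The second step is to check that the isomorphism $\Lus$ transports \cite{DL}'s identification to the complex one, compatibly with the $W$-actions. That $\Lus$ maps $\Hom({}^w\bar T',\C^\times)$ bijectively onto the subset of $T'^\vee = X'\otimes\C^\times$ cut out by \eqref{thetaq} is already built into the perfect pairing $(\bullet,\bullet)_{\Lus}$, which by construction is the transport through $\eta$ and the exponential of the pairing \cite[Eqn.~(5.2.3)*]{DL}. The point to verify is $W$-equivariance: $W$ acts on $\Hom({}^w\bar T',\C^\times)$ (after enlarging the finite field so as to make the relevant Weyl elements rational, i.e.\ through the norm maps appearing in the definition of geometric conjugacy) and on $T'^\vee$ through its action on $X'$, and $\Lus$ intertwines these because $(\bullet,\bullet)_{\Lus}$ is assembled from the canonical $W$-equivariant pairing of $X'$ and $Y'$. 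One also records that pullback along a norm map $N_{F^m/F}$ corresponds to the identity on $T'^\vee$: the element attached to $\theta'\circ N_{F^m/F}$ is again $\theta'^\vee$, now merely subject to $(\theta'^\vee)^{q^m} = \Fr_w^{-m}(\theta'^\vee)$ in place of \eqref{thetaq}. Granting these compatibilities, the condition defining geometric conjugacy, namely conjugacy by some $w'\in W$ after passing to a sufficiently large $\F_{q^m}$, becomes precisely $W$-conjugacy of $\theta_1'^\vee$ and $\theta_2'^\vee$ in $T'^\vee$.

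I expect the main obstacle to be exactly this reconciliation of conventions, and in particular the verification that the Weyl group that appears is the \emph{full} absolute $W$ rather than a twisted form such as $W^{\Fr_w}$. Here it is essential that geometric conjugacy is a condition over $\bar\F_q$, equivalently one permitted to invoke arbitrarily large finite extensions: by Lang's theorem every element of $W$ is rational over a suitable $\F_{q^m}$, so the ``for some $m$'' in the definition fuses the $W^{\Fr_w^m}$-orbits into a single $W$-orbit, and the field-extension compatibility of the Lusztig pairing guarantees that this is faithfully reflected on the dual side. Once this is in place, the proposition is immediate from \cite[Proposition~5.22]{DL}.
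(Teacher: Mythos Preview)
Your proposal is correct and follows the same approach as the paper: the paper simply states the proposition as ``Following \cite[Proposition 5.22]{DL}'' without further proof, treating it as a direct citation of Deligne--Lusztig transported to the complex dual torus via $\eta$ and the exponential as in \cite[\S 16]{Lus}. Your write-up supplies the compatibility checks (especially the $W$-equivariance of $\Lus$ and the passage from $W^{\Fr_w^m}$ to the full $W$ via arbitrarily large finite extensions) that the paper leaves implicit, but the underlying argument is identical.
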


If $y \in Y^{W \rtimes \Fr}$, the character $\epsilon' \circ \bar \chi_y$ can also be shifted to the side of the dual group.  Recall that $y$ gives an element
$$\kappa_y \in \Hom_{\Z[\Fr_w]}(Y/ Y_{\SC}, \Z), \quad \kappa_y(y') = B_Q(y,y').$$
Viewing $\kappa_y$ as an element of $X$, define
$$\xi_y = \kappa_y \otimes e^{2 \pi i / n } \in X \otimes \mu_{p'}(\C).$$
Since $\kappa_y \in X^{\Fr_w}$, and $n \mid (q-1)$, we find that $\xi_y \in (X \otimes \mu_{p'}(\C))^{q \Fr_w}$.

\begin{lm}
The character $\epsilon' \circ \bar \chi_y \From \bar G_x \To \mu_n$, restricted to ${}^w \bar T$, satisfies $\Lus(\epsilon' \circ \bar \chi_y) = \xi_y \in \hat T$.
\end{lm}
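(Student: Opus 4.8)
The plan is to unwind the definition of the Lusztig isomorphism $\Lus$ and to check the asserted equality by hand. Recall that $\Lus(\theta')$ is, by construction, the unique element $v \in (X'\otimes\mu_{p'}(\C))^{q\Fr_w}$ with $\theta'(t') = (v, t')_\Lus$ for every $t' \in {}^w\bar T' = (Y'\otimes\bar\F_q^\times)^{q\Fr_w}$. Hence it suffices to prove
\[
(\epsilon'\circ\bar\chi_y)(t') = (\xi_y, t')_\Lus \qquad \text{for all } t' \in {}^w\bar T'.
\]
Since $\epsilon'\circ\bar\chi_y$ is pulled back from $\bar M_x$, it is trivial on the central $\F_q^\times$ of ${}^w\bar T'$; this is why the resulting parameter lies in $X\otimes\mu_{p'}(\C) = \hat T$, consistent with the membership $\xi_y \in X\otimes\mu_{p'}(\C)$ recorded just before the lemma. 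Moreover it is enough to check the displayed identity on a set of generators of ${}^w\bar T'$, for instance on elements $t' = y'(\bar z)$ (with $y'\in Y$ lifted to $Y'$ and $\bar z \in \bar\F_q^\times$) subject to the $q\Fr_w$-invariance constraint.

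First I would make the left-hand side explicit. By Corollary \ref{cor-key} and Proposition \ref{ChiProp}, concretely by the commutator formula \eqref{ChiComm}, the restriction of $\bar\chi_y$ to the torus is the character attached to $\kappa_y \in X$, namely $\bar\chi_y(y'(\bar z)) = \bar z^{\langle\kappa_y, y'\rangle} = \bar z^{B_Q(y, y')}$. Note that $\kappa_y = B_Q(y, -)$ is not only $\Fr$-invariant but also $W$-invariant, because $B_Q$ is Weyl-invariant and $y \in Y^{W\rtimes\Fr}$; hence $\kappa_y$ is $\Fr_w$-invariant and defines an $\F_q^\times$-valued character of the twisted torus ${}^w\bar T$ as well. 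Composing with $\epsilon'$, which is the $\tfrac{q-1}{n}$-th power map followed by $\epsilon$, and using the compatibility $\epsilon(x) = e^{2\pi i\eta(x)}$ together with the additivity of $\eta$, one obtains
\[
(\epsilon'\circ\bar\chi_y)(y'(\bar z)) = \exp\!\left(2\pi i\cdot \tfrac{q-1}{n}\,\langle\kappa_y, y'\rangle\, \eta(\bar z)\right).
\]

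It then remains to compute $(\xi_y, t')_\Lus$ for $\xi_y = \kappa_y\otimes e^{2\pi i/n}$ and to match. Here I would trace $\xi_y$ backwards through the layers defining $(\bullet,\bullet)_\Lus$: this pairing is obtained from the $(\Q/\Z)_{p'}$-valued pairing of \cite[Eqn.~(5.2.3)*]{DL} by applying $\eta$ on the $Y'$-factor and the exponential map on the $X'$-factor, and that pairing is itself the descent to $q\Fr_w$-fixed torsion of $(a,b)\mapsto\langle a, q\Fr_w b - b\rangle$. On the $q\Fr_w$-invariant elements $t' = y'(\bar z)$ the twist built into this last pairing does not interfere, so it reduces, after the identifications, to $\langle\kappa_y, y'\rangle$ paired against $\eta(\bar z)$, while the tensor factor $e^{2\pi i/n}$ supplies exactly the scalar $\tfrac{q-1}{n}$ once one passes back through the exponential (using $\tfrac1n = \tfrac{q-1}{n}\cdot\tfrac1{q-1}$ and the fact that $\eta$ maps $\F_q^\times$ into $\tfrac1{q-1}\Z/\Z$). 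Comparing with the previous display gives the identity. The main obstacle is purely bookkeeping: keeping the several identifications ($\eta$, the exponential, the descent step of \cite{DL}, and the precise shape of the defining pairing) mutually consistent, and in particular verifying that the $q\Fr_w$-twist does not distort $B_Q(y, -)$. This is resolved by the observation that $\Lus$ is designed so that its image consists of $q\Fr_w$-fixed vectors, on which the twisted and untwisted pairings agree; once the normalizations of $\eta$ and $\epsilon$ are fixed compatibly, the equality of the two computed characters is immediate.
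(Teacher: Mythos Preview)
Your approach is essentially the same as the paper's: evaluate both sides on elements $y'(\bar z)$ using the commutator formula \eqref{ChiComm} for the left-hand side and the explicit Lusztig pairing for the right-hand side, then match. The paper carries this out as the chain
\[
(\xi_y, y'\otimes\bar z)_\Lus \;=\; e^{\frac{2\pi i}{n}\cdot\eta(\bar z)\cdot\langle\kappa_y,\, q\Fr_w y' - y'\rangle} \;=\; e^{\frac{2\pi i(q-1)}{n}\cdot\eta(\bar z)\cdot B_Q(y,y')} \;=\; \epsilon'\bigl(\bar z^{B_Q(y,y')}\bigr),
\]
which is exactly what you outline.

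One correction to your bookkeeping, however: the factor $(q-1)$ does \emph{not} arise from the tensor factor $e^{2\pi i/n}$ or from any normalization of $\eta$ on $\F_q^\times$. It comes directly from the twisted pairing itself. Since $\kappa_y$ is $\Fr_w$-invariant (as you correctly observe), one has $\langle\kappa_y, q\Fr_w y' - y'\rangle = q\langle\kappa_y, y'\rangle - \langle\kappa_y, y'\rangle = (q-1)\langle\kappa_y, y'\rangle$. So the twist in the pairing does not ``fail to interfere'' --- it is precisely the source of the $(q-1)$. The factor $1/n$ comes from $e^{2\pi i/n}$, and together they produce the $(q-1)/n$ that matches your left-hand side. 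This does not affect the correctness of the argument, only the explanation of where the pieces come from.
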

\begin{proof}
Suppose that $y' \in Y'$ and $\bar z \in \bar \F_q^\times$.  Then we compute,
\begin{align*}
\left( \xi_y, (y' \otimes \bar z) \right)_\Lus  &= \left( \kappa_y \otimes e^{2 \pi i / n}, y' \otimes \bar z \right)_\Lus \\
&= e^{\frac{2 \pi i }{n} \cdot \eta(\bar z) \cdot  \langle \kappa_y, q \Fr_w y' - y' \rangle } \\
&= e^{\frac{2 \pi i }{n} \cdot \eta(\bar z) \cdot  \langle \kappa_y, (q-1) y'  \rangle }, \quad \text{ (since $\kappa_y$ is $\Fr_w$-invariant)} \\
&= e^{\frac{ 2 \pi i (q-1)}{n} \cdot \eta(\bar z) \cdot B_Q(y,y') },  \\
&= \epsilon' \left( \bar z^{B_Q(y,y')} \right) = \epsilon' \left(\chi_y( y'(\bar z) ) \right).
\end{align*}
\end{proof}

\begin{prop}
\label{thetaProp}
Suppose that $\theta' \From {}^w \bar T' \To \C^\times$ is a genuine character in general position, and $\rho = \rho({}^w T', \theta')$.  Then
$$Y_{x, \rho} = \{ y \in Y^{W \rtimes \Fr} : \theta'^\vee \text{ is $W$-conjugate to } \theta'^\vee \cdot \xi_y \}.$$
\end{prop}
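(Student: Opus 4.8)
The plan is to transport the description of $Y_{x,\rho}$ furnished by Proposition \ref{P:prep} — which is phrased in terms of geometric conjugacy of characters of ${}^w \bar T'$ — across the isomorphism $\Lus$ to the complex dual torus $T'^\vee$, where geometric conjugacy becomes $W$-conjugacy. Proposition \ref{P:prep} already gives $Y_{x,\rho} = \{ y \in Y^{W \rtimes \Fr} : \theta' \sim \theta' \cdot (\epsilon' \circ \bar\chi_y) \}$, the character $\epsilon' \circ \bar\chi_y$ of ${}^w \bar T$ being regarded as a character of ${}^w \bar T'$ by inflation along ${}^w \bar T' \onto {}^w \bar T$. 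So the whole task is to rewrite the single condition $\theta' \sim \theta' \cdot (\epsilon' \circ \bar\chi_y)$ on the dual side.

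First I would invoke the dual-group description of geometric conjugacy stated above (following \cite[Proposition 5.22]{DL}): two characters $\theta_1', \theta_2'$ of ${}^w \bar T'$ are geometrically conjugate if and only if $\Lus(\theta_1')$ and $\Lus(\theta_2')$ are $W$-conjugate in $T'^\vee$ — a statement that needs no genericity hypothesis, unlike the passage between characters and cuspidal irreps used inside Proposition \ref{P:prep}. Applying it with $\theta_1' = \theta'$ and $\theta_2' = \theta' \cdot (\epsilon' \circ \bar\chi_y)$ reduces matters to computing $\Lus\big(\theta' \cdot (\epsilon' \circ \bar\chi_y)\big)$. For this I would use that $\Lus$ is a homomorphism of abelian groups — it comes from the bilinear perfect pairing $(\bullet,\bullet)_{\Lus}$ — so it sends the pointwise product of characters to the product in the multiplicatively written torus $T'^\vee$; together with the preceding lemma, which computes $\Lus(\epsilon' \circ \bar\chi_y) = \xi_y$ (viewed in $T'^\vee = X' \otimes \C^\times$ via the inclusion $X \hookrightarrow X'$ dual to $Y' \onto Y$), this gives $\Lus\big(\theta' \cdot (\epsilon' \circ \bar\chi_y)\big) = \theta'^\vee \cdot \xi_y$.

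Chaining these, $y \in Y_{x,\rho}$ if and only if $\theta'^\vee$ is $W$-conjugate to $\theta'^\vee \cdot \xi_y$ in $T'^\vee$, which is exactly the asserted formula. The only step that takes genuine care — and the one I expect to be the main obstacle to a fully rigorous write-up — is the bookkeeping inside the computation of $\Lus\big(\theta' \cdot (\epsilon' \circ \bar\chi_y)\big)$: one must be sure that inflating a character along ${}^w \bar T' \onto {}^w \bar T$ corresponds, under $\Lus$ and its torus-level counterpart, exactly to the inclusion $X \hookrightarrow X'$ dual to $Y' \onto Y$, so that the $\xi_y \in X \otimes \mu_{p'}(\C)$ produced by the lemma genuinely represents the Lusztig parameter of the inflated character. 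This is a diagram chase with the pairings $(x,y) \mapsto \langle x, q\Fr_w y - y\rangle$ on $(X,Y)$ and on $(X',Y')$; once it is in place, everything else is formal, and the preceding lemma has in effect already carried it out.
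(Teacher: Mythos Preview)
Your proposal is correct and matches the paper's approach exactly: the paper does not even give a separate proof for Proposition~\ref{thetaProp}, treating it as an immediate consequence of Proposition~\ref{P:prep}, the $W$-conjugacy criterion for geometric conjugacy, and the preceding lemma computing $\Lus(\epsilon'\circ\bar\chi_y)=\xi_y$. The bookkeeping point you flag about inflating along ${}^w\bar T'\onto{}^w\bar T$ corresponding to the inclusion $X\hookrightarrow X'$ is precisely what the paper addresses in its remark immediately following the proposition.
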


Note here that we are multiplying $\theta'^\vee \in T'^\vee = X' \otimes \C^\times$ and $\xi_y \in \hat T = X \otimes \C^\times$.  For this, we are applying the embedding $X \into X'$ dual to the projection $Y' \onto Y$.
%

\section{Coverings of $\alg{GL}_r$}

In this section, we will consider coverings of $\alg{GL}_r$.  Our precise results indicate that the dimension of $\Wh_\psi(\pi)$ varies as $\pi$ varies, even within the class of genuine depth zero generic supercuspidal irreps of a fixed covering group.  We hope that these dimensions can be predicted in a future theory of L-packets for covering groups, and this should be seen as a first step in this direction.

In \cite{Blo}, Blondel studied the Whittaker models of supercuspidal representations of the Kazhdan-Patterson covers of $\alg{GL}_r$.  Fundamentally, our methods are similar to those of \cite{Blo}.  But here we obtain more general formulae for Brylinski-Deligne covering groups, and by using Lusztig parameters, the computations become simpler.  

\subsection{Classification of coverings}

Let $\alg{G} = \alg{GL}_r$, with the standard Borel subgroup $\alg{B}$ of upper-triangular matrices and torus $\alg{T}$ of diagonal matrices.  Let $\set{e_1, e_2, ..., e_r}$ be the basis for the cocharacter lattice $Y$ of $\alg{T}$, corresponding to the diagonal entries.  Let $e_0 = \sum e_i$, so $e_0$ generates the lattice $Y^W$ of central cocharacters.  The set of simple coroots is $\Delta^\vee=\set{\alpha_i^\vee:=e_i - e_{i+1}}_{1\le i\le r-1}$, and these span the coroot lattice $Y_{\SC}$.

By the main theorem of Brylinski and Deligne \cite{BD}, central extensions $\alg{K}_2 \into \alg{G}' \onto \alg{G}$ are classified (up to unique isomorphism) by triples $(Q, \sheaf{D}, f)$, where $Q \From Y \To \Z$ is a Weyl-invariant quadratic form and $F^\times \into \sheaf{D} \onto Y$ is a central extension.  The quadratic form $Q$ determines a central extension $F^\times \into \sheaf{D}_Q \onto Y_{\SC}$ (see \cite[\S 11]{BD}), and the ``third invariant'' is an embedding $f \From \sheaf{D}_Q \To \sheaf{D}$ lying over $Y_{\SC} \To Y$.  Since $Y / Y_{\SC}$ is free (i.e., since the derived subgroup of $\alg{G}$ is simply-connected), one can see that the {\em isomorphism class} of such a central extension $\alg{G}'$ is uniquely determined by just the first invariant $Q$. By \cite{We3}, this classification holds for central extensions of $\alg{GL}_r$ by $\alg{K}_2$ over $\O$ just as it holds over $F$.

Weyl-invariant quadratic forms $Q \From Y \To \Z$ are uniquely determined by two integers:
$$\p = Q(e_i) \text{ (for all $1 \leq i \leq r$)} \text { and } \q = B_Q(e_i, e_j) \text{ (for all $1 \leq i < j \leq r$)}.$$
Note that 
$$Q(\alpha^\vee) = Q(e_i - e_j) = Q(e_i) + Q(e_j) - B_Q(e_i, e_j) = 2 \p - \q.$$ 
This number $2 \p - \q$ determines the resulting central extension of $\alg{SL}_r$ by $\alg{K}_2$, and therefore plays an important role in understanding the covers of $\alg{GL}_r$.  That is, if $2 \p - \q = 1$, then the resulting central extension of $\alg{SL}_r$ is Matsumoto's canonical extension.

An easy computation yields
$$Q(e_0) = r \p + \left( {r \atop 2} \right) \q.$$
The number $Q(e_0)$ determines the central extension of the center $\alg{Z} \subset \alg{G}$ by $\alg{K}_2$, up to isomorphism.  Note that, for every $r \geq 1$, the pair of integers $(Q(e_0), Q(\alpha^\vee))$ determines the pair $(\p, \q)$, and vice versa.  

Below we highlight three classes of covers of $\alg{GL}_r$.

\subsubsection{Determinantal coverings}
Suppose that $\alg{G}'$ is an extension of $\alg{G} = \alg{GL}_r$ by $\alg{K}_2$, for which 
$$2 \p - \q = 0.$$
Then the pullback of the extension to $\alg{SL}_r$ splits uniquely, $\alg{SL}_r' = \alg{SL}_r \times \alg{K}_2$.  It follows that $\alg{G}'$ can be realized canonically as the pullback of an extension of $\alg{GL}_1$ via the determinant.
$$\begin{tikzcd}
\alg{K}_2 \inarrow{r} \arrow{d}{=} & \alg{G}' \onarrow{r} \arrow{d} & \alg{GL}_r \arrow{d}{\det} \\
\alg{K}_2 \inarrow{r} & \alg{GL}_1' \onarrow{r} & \alg{GL}_1
\end{tikzcd}$$

The isomorphism class of the extension $\alg{GL}_1$ is determined by the single integer $\p$.  More concretely, the extension $\alg{GL}_1'$ is isomorphic to the extension whose underlying sheaf of sets is $\alg{GL}_1 \times \alg{K}_2$, with multiplication given by
$$(u_1, \zeta_1) \cdot (u_2, \zeta_2) = \left( u_1 u_2, \zeta_1 \zeta_2 \cdot \{ u_1, u_2 \}^{\p} \right).$$

Thus the extension $\alg{G}'$ is isomorphic to the extension whose underlying sheaf of sets is $\alg{GL}_r \times \alg{K}_2$, with multiplication given by
$$(g_1, \zeta_1) \cdot (g_2, \zeta_2) = \left( g_1 g_2, \zeta_1 \zeta_2 \cdot \{ \det g_1, \det g_2 \}^{\p} \right).$$
We call these extensions the {\em determinantal coverings} of $\alg{GL}_r$.  All are defined over $\O$.

\subsubsection{The Kazhdan-Patterson coverings}
Now suppose that $\alg{G}'$ is an extension of $\alg{G} = \alg{GL}_r$ by $\alg{K}_2$, for which 
$$2 \p - \q = -1.$$
Then the pullback of the extension to $\alg{SL}_r$ is the opposite of Matsumoto's universal central extension, i.e., $Q(\alpha^\vee) = -1$ for every simple coroot (cf.~\cite[Proposition 4.15]{BD}).  In this case, the $n$-fold covering groups $\tilde G$ are exactly those studied by Kazhdan-Patterson \cite[\S 0.1]{KP}. The parameter $\p$ corresponds to the twisting parameter $c$ in the notation of \cite{KP}.  This family is the most widely studied among all Brylinski-Deligne extensions of $\alg{GL}_r$.  When $c = \p = 0$, the covering groups $\tilde G$ are also the focus of earlier works of \cite{Fli}, \cite{GHPS}, etc.

The simplest construction of such an extension $\alg{G}'$ is the following:  let $f \From \alg{GL}_r \To \alg{SL}_{r+1}$ be the block-diagonal embedding given by $g\mapsto (g, \det(g)^{-1})$. Let $\alg{SL}'_{r+1}$ be the (unique up to unique isomorphism) extension such that $Q(\alpha^\vee)=-1$ for any coroot $\alpha^\vee$.  Let $\alg{G}'$ be the pullback of $\alg{SL}_{r+1}'$ to $\alg{GL}_r$ via $f$.  Then the invariants of $\alg{G}'$ are easily computed:  $\p = \q = -1$.  

To construct the other Kazhdan-Patterson extensions, one may simply twist this covering by a suitable determinantal covering.  Here twisting refers to the Baer sum of central extensions, which corresponds to addition of quadratic forms, and thus to addition of invariants $(\p,\q)$.  At the level of cocycles, twisting corresponds to multiplication of cocycles, which is how Kazhdan and Patterson construct their extensions $\widetilde{GL}_r^{(c)}$.   

Since the Matsumoto cover of $\alg{SL}_{r+1}$ can be defined over $\O$, so too can the Kazhdan-Patterson covering with $\p = \q = -1$.  By taking Baer sums of Kazhdan-Patterson coverings ($a$ times) and determinantal coverings ($b$ times), we can find $\O$-models of coverings with invariants 
$$(\p, \q) = a (-1,-1) + b(1,2) \text{ for all } a,b \in \Z.$$
This constructs an $\O$-model of every central extensions of $\alg{GL}_r$ by $\alg{K}_2$.

\subsubsection{Savin's nice coverings}
Gordan Savin has recently (see \cite{Sav}) introduced a class of coverings of $GL_r$ which also can be considered in the Brylinski-Deligne category.  He studies extensions $\alg{K}_2 \into \alg{G}' \onto \alg{G} = \alg{GL}_r$ which have invariants $\p, \q$ satisfying
$$2 \p - \q = -2.$$

The simplest construction of such an extension $\alg{G}'$ is the following:  let $h \From \alg{GL}_r \To \alg{Sp}_{2r}$ be the embedding of the Siegel Levi subgroup.  Let $\alg{Sp}_{2r}'$ be the extension of $\alg{Sp}_{2r}$ by $\alg{K}_2$ whose quadratic form $Q$ satisfies $Q(\alpha^\vee) = -1$ for every short coroot.  Let $\alg{G}'$ be the pullback of $\alg{Sp}_{2r}'$ via $h$.  Then the invariants of $\alg{G}'$ are $\p = -1, \q = 0$.

The resulting $n$-fold covering groups $\tilde G$ are particularly nice, because their restrictions to Levi subgroups are ``block-commutative'' -- this is a consequence of the identity $\q = 0$.  All other extensions satisfying $2 \p - \q = -2$ can be obtained from this $\alg{G}'$ by twisting by a determinantal covering. 


\subsection{Generic depth zero supercuspidals}
Let $\alg{G}'$ be an extension of $\alg{G} = \alg{GL}_r$ by $\alg{K}_2$ with invariants $\p, \q$, defined over $\O$.  If $n \mid (q-1)$ as usual, we find an $n$-fold cover $\mu_n \into \tilde G \onto G$.  This is obtained as the pushout of the full tame extension $\F_q^\times \into G' \onto G$.  We study the Whittaker functionals of $\epsilon$-genuine generic depth zero supercuspidal irreps of $\tilde G$, or equivalently, $\epsilon'$-genuine irreps of $G'$.

In the building of $G = \alg{GL}_r(F)$, all hyperspecial points are $G$-conjugate, and thus in the orbit of the hyperspecial point $\circ$ arising from the usual $\O$-model of $\alg{GL}_r$.  Thus (up to isomorphism) every genuine depth zero generic supercuspidal irrep of $\tilde G$ has the form $\pi_{\circ, \tau}$, where $\tau$ is a genuine cuspidal irrep of $N_\circ' = Z' G_\circ'$.  As we have fixed an $\O$-model of the extension $\alg{G}'$, we have a distinguished splitting of $G_\circ = \alg{GL}_r(\O)$ into $G'$.  Thus we can write
$$N_\circ' = Z' \cdot G_\circ \text{ and } G_\circ' = \F_q^\times \times G_\circ.$$

The residual extension $\alg{\bar G}_m \into \alg{\bar G}_\circ' \onto \alg{\bar G}_\circ$ inherits a splitting from the $\O$-model of $\alg{G}'$, so
$$\alg{\bar G}_\circ' = \alg{\bar G}_\circ \times \alg{\bar G}_m = \alg{\overline{GL}}_r \times \alg{\bar G}_m.$$
The minisotropic tori in $\alg{\overline{GL}}_r$ are parameterized by the $r$-cycles in the Weyl group $W = S_r$.  Thus there is a unique conjugacy class of minisotropic tori, and we choose $w = (1 2 \ldots r)$ for the $r$-cycle, and ${}^w \alg{\bar T}$ for the resulting minisotropic torus.  There is an identification,
$${}^w \alg{\bar T}(\F_q) \isom \F_{q^r}^\times.$$

The splitting of the residual extension gives a splitting of tori, ${}^w \bar T' = {}^w \bar T \times \F_q^\times$, and so every genuine character of ${}^w \bar T'$ has the form $\theta' = \theta \otimes \epsilon'$ for some
$$\theta \From {}^w T(\F_q) = \F_{q^r}^\times \To \C^\times.$$

If $\theta$ is in general position, we obtain a genuine generic cuspidal irrep $\rho = \rho({}^w T', \theta \otimes \epsilon')$ of $\bar G_\circ'$.  This gives a family of genuine generic depth zero representations $\pi_\tau$, as $\tau$ ranges over irreps of $N_\circ'$ whose restriction contains $\rho$.

In this case, the dual torus $T^\vee = X \otimes \C^\times$ can be identified with the diagonal complex matrices in $GL_r(\C)$.  We write $f_1, \ldots, f_r$ for the $\Z$-basis of $X$ dual to the basis $e_1, \ldots, e_r$ of $Y$.  Define $f_0 = f_1 + \cdots + f_r$.  We view the Lusztig parameter of $\theta$ as a diagonal complex matrix,
$$\theta^\vee = \diag( \theta^\vee_1, \ldots, \theta^\vee_r ), \quad \theta^\vee_i \in \mu_{p'}(\C),$$
and by \eqref{thetaq}, we have
$$\theta_{i+1}^\vee = (\theta_i^\vee)^q \text{ for } 1 \leq i \leq r-1, \quad \theta^\vee_1 = (\theta^\vee_r)^q.$$
Thus the parameter $\theta^\vee$ is determined by the single complex number
$$\theta^\vee_1 \in \C^\times, \quad (\theta^\vee_1)^{q^r} = \theta^\vee_1.$$

For $\theta$ to be in general position, it is necessary and sufficient that $\theta^\vee$ have trivial stabilizer in $W = S_r$.  Equivalently, the numbers $\theta^\vee_1, \ldots, \theta^\vee_r$ are distinct, or equivalently,
$$(\theta_1^\vee)^{q^s} \neq \theta^\vee_1 \text{ for all $1 \leq s < r$.}$$
  
\subsection{Whittaker models}

Let $\theta$ be a character of ${}^w T$ in general position, and $\pi$ a genuine generic depth zero supercuspidal representation of $\tilde G$ compactly induced from $\tau$, an irrep of $N_\circ'$ whose restriction to $\bar G_\circ'$ contains $\rho = \rho({}^w T', \theta \otimes \epsilon')$.

To study $\dim \Wh_\psi(\pi)$, we note that $Y^W = Y^{W \rtimes \Fr} = \Z e_0$, and recall Proposition \ref{thetaProp},
$$Y_{\circ, \rho} = \{ y \in Y^{W \rtimes \Fr} : \theta^\vee \text{ is $W$-conjugate to } \theta^\vee \cdot \xi_y \}.$$
For $1 \leq i \leq r$, we compute
$$\kappa_{e_0}(e_i) = B_Q(e_0, e_i) = 2 \p + (r-1) \q.$$
As shorthand, define $\mm_{Q,r} = 2\p+(r-1)\q$.  Then we have,
\begin{align*}
\xi_{e_0} &= \mm_{Q,r} f_0 \otimes e^{2 \pi i / n}  \\
&= \diag \left( e^{2 \pi i \mm_{Q,r} / n}, \ldots, e^{2 \pi i \mm_{Q,r} / n} \right) \in T^\vee = X \otimes \C^\times.
\end{align*}

Combining the results above, we find a concrete formula for the dimension of the space of Whittaker functionals.
\begin{prop}
The dimension of $\Wh_\psi(\pi)$ is determined by the parameter $\theta^\vee$, according to the formula
$$\dim \Wh_\psi(\pi) = \min \{ k : k > 0 \text{ and } e^{2 \pi i \mm_{Q,r} k / n} \cdot \theta^\vee_1 = (\theta_1^\vee)^{q^s} \text{ for some } s \}.$$
In particular,
$$\dim \Wh_\psi(\pi) \text{ divides } \frac{n}{\gcd(n, \mm_{Q,r})}.$$
\end{prop}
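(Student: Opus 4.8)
The plan is to make the lattice $Y_{\circ,\rho}$ completely explicit, using Proposition~\ref{thetaProp} together with the shape of $\xi_{e_0}$ recorded just above, and then to read off both assertions.

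First I would observe that Theorem~\ref{Bound1} gives $\dim\Wh_\psi(\pi) = [Y^{W\rtimes\Fr}:Y_{\circ,\rho}]$, and that $Y^{W\rtimes\Fr} = \Z e_0$ is infinite cyclic; hence $Y_{\circ,\rho} = k_0\,\Z e_0$ for a unique integer $k_0\ge 1$, and $\dim\Wh_\psi(\pi) = k_0$. So it suffices to identify $k_0$ as the least positive $k$ with $k e_0\in Y_{\circ,\rho}$. By Proposition~\ref{thetaProp}, $k e_0\in Y_{\circ,\rho}$ if and only if $\theta^\vee$ is $W$-conjugate to $\theta^\vee\cdot\xi_{ke_0}$. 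Since $B_Q$ is bilinear, $\kappa_{ke_0}=k\kappa_{e_0}$ and hence $\xi_{ke_0}=\xi_{e_0}^k$; by the computation just before the proposition, $\xi_{e_0}$ is the scalar matrix $e^{2\pi i\,\mm_{Q,r}/n}\cdot I$. Writing $c=e^{2\pi i\,\mm_{Q,r}/n}$, the relevant element is $\theta^\vee\cdot\xi_{ke_0}=c^k\theta^\vee=\diag(c^k\theta_1^\vee,\dots,c^k\theta_r^\vee)$.

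Next I would carry out the $W=S_r$-conjugacy analysis, which is the crux. Since $S_r$ acts on $T^\vee$ by permuting diagonal entries, $c^k\theta^\vee$ is $W$-conjugate to $\theta^\vee$ if and only if the multiset $\{c^k\theta_i^\vee : 1\le i\le r\}$ equals $\{\theta_i^\vee : 1\le i\le r\}$. Using $\theta_{i+1}^\vee=(\theta_i^\vee)^q$ and $\theta_1^\vee=(\theta_r^\vee)^q$, the latter is the Frobenius orbit $\{(\theta_1^\vee)^{q^j} : 0\le j\le r-1\}$, whose $r$ members are pairwise distinct because $\theta$ is in general position. Necessity is then immediate: $c^k\theta_1^\vee$ must lie in that orbit, i.e.\ $c^k\theta_1^\vee=(\theta_1^\vee)^{q^s}$ for some $s$. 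For sufficiency I would use that $c^n=1$ and $n\mid q-1\mid q^j-1$, so that $c^{q^j}=c$ for all $j$; then a single relation $c^k\theta_1^\vee=(\theta_1^\vee)^{q^s}$ forces $c^k(\theta_1^\vee)^{q^j}=(c^k\theta_1^\vee)^{q^j}=(\theta_1^\vee)^{q^{s+j}}$ for every $j$, so multiplication by $c^k$ merely shifts the Frobenius orbit cyclically by $s$ (using $(\theta_1^\vee)^{q^r}=\theta_1^\vee$) and the two multisets coincide. This yields the displayed formula $\dim\Wh_\psi(\pi)=k_0=\min\{k>0 : e^{2\pi i\,\mm_{Q,r}k/n}\,\theta_1^\vee=(\theta_1^\vee)^{q^s}\text{ for some }s\}$.

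Finally, for the divisibility statement, I would specialize to $s=0$: the condition $c^k=1$ holds exactly when $n\mid\mm_{Q,r}k$, i.e.\ when $\tfrac{n}{\gcd(n,\mm_{Q,r})}\mid k$. In particular $k=\tfrac{n}{\gcd(n,\mm_{Q,r})}$ satisfies the defining property of $k_0$, so $\tfrac{n}{\gcd(n,\mm_{Q,r})}e_0\in Y_{\circ,\rho}=k_0\Z e_0$, and therefore $k_0$ divides $\tfrac{n}{\gcd(n,\mm_{Q,r})}$. The one delicate point is the bidirectional $S_r$-conjugacy step above — in particular checking that a single relation $c^k\theta_1^\vee=(\theta_1^\vee)^{q^s}$ propagates through the whole Frobenius orbit, which is precisely where the hypotheses $n\mid q-1$ and general position of $\theta$ enter; the remaining steps are bookkeeping.
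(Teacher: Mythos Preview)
Your proposal is correct and follows exactly the approach the paper intends. The paper does not spell out a proof of this proposition at all---it simply says ``Combining the results above, we find a concrete formula\ldots''---and your argument is precisely the intended combination: Theorem~\ref{Bound1} reduces to computing $[Y^{W\rtimes\Fr}:Y_{\circ,\rho}]$, the identification $Y^{W\rtimes\Fr}=\Z e_0$ makes this an index in $\Z$, Proposition~\ref{thetaProp} together with the computation of $\xi_{e_0}$ gives the membership criterion, and you correctly reduce $W$-conjugacy to a condition on the single entry $\theta_1^\vee$ using the $q$-power structure and the fact that $c^{q}=c$ (since $n\mid q-1$). The divisibility claim via $s=0$ is also the natural route. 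One small remark: Proposition~\ref{thetaProp} is literally stated for $\theta'^\vee\in T'^\vee$, but since the residual extension at $\circ$ splits and $W$ acts trivially on the extra $\alg{\bar G}_m$ factor, passing to $\theta^\vee\in T^\vee$ as you do is harmless; this is implicit in the paper as well.
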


This gives uniqueness of Whittaker models in some important cases, also found by Blondel \cite[\S 3.4(3)]{Blo}.
\begin{cor}
If $n \mid \mm_{Q,r} = 2 \p + (r-1) \q$, then $\dim \Wh_\psi(\pi) = 1$.
\end{cor}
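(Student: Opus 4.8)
The plan is to read the statement off the Proposition immediately preceding it, which records both the exact value
$$\dim \Wh_\psi(\pi) = \min \{ k > 0 : e^{2 \pi i \mm_{Q,r} k / n} \cdot \theta^\vee_1 = (\theta_1^\vee)^{q^s} \text{ for some } s \}$$
and the divisibility $\dim \Wh_\psi(\pi) \mid n/\gcd(n, \mm_{Q,r})$. Under the hypothesis $n \mid \mm_{Q,r}$ one has $\gcd(n, \mm_{Q,r}) = n$, so $n/\gcd(n, \mm_{Q,r}) = 1$, and a positive integer dividing $1$ must equal $1$. Equivalently, and perhaps more transparently, I would just verify that $k = 1$ already satisfies the condition defining the minimum: since $\mm_{Q,r}/n \in \Z$, the root of unity $e^{2\pi i \mm_{Q,r}/n}$ is trivial, so taking $s = 0$ gives $e^{2\pi i \mm_{Q,r}/n}\cdot\theta^\vee_1 = \theta^\vee_1 = (\theta^\vee_1)^{q^0}$; as $1$ is the least positive integer, the minimum is $1$.

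Alternatively one can bypass the preceding Proposition and argue at the level of lattices. From $\kappa_{e_0} = \mm_{Q,r}(f_1 + \cdots + f_r)$ one gets $B_Q(e_0, y') = \mm_{Q,r}\langle f_0, y'\rangle \in n\Z$ for every $y' \in Y$ when $n \mid \mm_{Q,r}$, i.e.\ $e_0 \in Y_{Q,n}$. Since $Y^{W \rtimes \Fr} = \Z e_0$ for $\alg{GL}_r$, this says $Y^{W \rtimes \Fr} \subseteq Y_{Q,n}$, and then Corollary \ref{cor-key} gives $Y_{\circ, \rho} \supseteq Y^{W \rtimes \Fr} \cap Y_{Q,n} = Y^{W \rtimes \Fr}$, so the index $[Y^{W \rtimes \Fr} : Y_{\circ, \rho}] = \dim \Wh_\psi(\pi)$ equals $1$.

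There is no real obstacle here: the content is entirely in the preceding Proposition (and in Proposition \ref{thetaProp} and the computation of $\xi_{e_0}$ as a scalar matrix on which it rests). The only point worth a sentence is that $n \mid \mm_{Q,r}$ forces the twisting character $\epsilon' \circ \bar\chi_{e_0}$ to be trivial, which is immediate from $n \mid (q-1)$ together with the formula \eqref{ChiComm}. Consequently the write-up will be two or three lines, citing whichever of the two routes above reads most cleanly in context.
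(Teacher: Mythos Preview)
Your proposal is correct and matches the paper's approach: the corollary is stated without a separate proof, as an immediate consequence of the divisibility clause in the preceding Proposition, exactly as in your first argument. Your alternative routes (checking $k=1$, $s=0$ directly, or the lattice argument via $e_0 \in Y_{Q,n}$) are also valid and equally short.
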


\subsection{Remarks on parameters}

If one accepts a natural local Langlands parameterization for covering groups, as described in \cite{We4}, then the work of DeBacker and Reeder suggests that a depth zero supercuspidal representation of an unramified covering group $\tilde G$ should have a Weil parameter $\phi \From \Weil_F \To {}^\EL \tilde G$.  Here $\tilde G^\vee \into {}^\EL \tilde G \onto \mathrm{Gal}_F$ denotes the L-group of the covering group.  This parameter $\phi$ should send a topological generator of tame inertia to a regular semisimple element $\tilde \theta^\vee \in \tilde G^\vee$, and should send Frobenius to an element of ${}^\EL \tilde G$ acting on the torus via a Weyl element $w^\vee$.  It is straightforward to adapt \cite{DR} to construct $(\tilde \theta^\vee, w^\vee)$ from the Lusztig parameter described earlier, and thus give a conjectural parameter $\phi$.

More difficult is the interpretation of $\dim \Wh_\psi(\pi)$ from the parameter $\phi$.  At a formal, computational level, $\dim \Wh_\psi(\pi)$ can be recovered from $\phi$ (when $\pi$ is known to be generic).  But it seems likely that a full understanding will require one to understand pure (or rigid) inner forms for covering groups.  We leave such a deep study for a future paper.

\bibliographystyle{amsalpha}
\bibliography{WDZ}

\end{document}